\tikzset{commutative diagrams/.cd}
\numberwithin{equation}{section}
\newtheorem{theorem}{Theorem}[section]
\newtheorem{corollary}[theorem]{Corollary}
\newtheorem{lemma}[theorem]{Lemma}
\newtheorem{proposition}[theorem]{Proposition}
\theoremstyle{definition}
\newtheorem{definition-theorem}[theorem]{Definition-Theorem}
\theoremstyle{remark}
\newtheorem{remark}{Remark}
\newtheorem*{remark*}{Remark}
\newtheorem*{example*}{Example}
\newcommand\R{{\mathbb R}}
\newcommand\F{{\mathbb F}}
\newcommand\Fq{{\mathbb F}_q}
\newcommand\subeq{\subseteq}
\newcommand\supeq{\supseteq}
\DeclarePairedDelimiter{\abs}{\lvert}{\rvert}
\DeclarePairedDelimiter{\set}{\{}{\}}
\newcommand{\defn}{\textbf}
\newcommand{\AGM}{\mathrm{AGM}}
\newcommand{\agmchain}{(a_0,b_0)\overset{\AGM}{\mapsto} (a_1,b_1)\overset{\AGM}{\mapsto} \dots \overset{\AGM}{\mapsto} (a_n,b_n)}
\newcommand{\qthreemodfour}{q\equiv 3\bmod 4}
\newcommand{\qfivemodeight}{q\equiv 5\bmod 8}
\newcommand{\Sadv}[2]{S_{#2}^{\mathrm{adv}_{#1}}}
\newcommand{\Sback}[2]{S_{#2}^{\mathrm{back}_{#1}}}
\newcommand{\Tadv}[2]{T_{#2}^{\mathrm{adv}_{#1}}}
\newcommand{\Tback}[2]{T_{#2}^{\mathrm{back}_{#1}}}
\begin{document}
\title[Dynamical structure of AGM]{Dynamical structure of AGM over finite fields with order congruent to $5 \bmod 8$}
\author{Daniel A. N. Vargas}
\address{Department of Mathematics, Harvey Mudd College}
\email{dvargas@g.hmc.edu}
\thanks{}

\subjclass[2020]{14H52;11G20;12E30}
\keywords{Arithmetic-Geometric Means, Finite Field Arithmetic, Elliptic Curves over Finite Fields}

\date{}

\date{\today}

\begin{abstract}
    Motivated by classical works of Gauss and Euler on the AGM, Ono and his collaborators \cite{MR4567422, mcspirit2023hypergeometryagmfinitefields} investigated the union of AGM sequences over finite fields $\Fq$, where $q \equiv 3 \bmod 4$, which they refer to as swarms of jellyfish. A recent preprint \cite{kayath2024agmaquariumsellipticcurves} extends some of their results to all finite fields with odd characteristic. For $q \equiv 5 \bmod 8$, we reveal finer details about the structure of the connected components, which turn out to be variants of jellyfish with longer and branched tentacles. Moreover, we determine the total population of these swarms in terms of the celebrated base ``congruent number" elliptic curve.
\end{abstract}

\maketitle

\section{Introduction and statements of results}
Given two nonnegative numbers $\alpha$ and $\beta$, their arithmetic and geometric means are $\frac{\alpha+\beta}{2}$ and $\sqrt{\alpha\beta}$, respectively. Motivated by Gauss and Euler, the iterated sequence of arithmetic-geometric means $$(\alpha, \beta) \mapsto \left(\frac{\alpha+\beta}{2}, \sqrt{\alpha\beta}\right) \mapsto \left(\frac{\frac{\alpha+\beta}{2}+\sqrt{\alpha\beta}}{2}, \sqrt{\frac{\alpha+\beta}{2} \sqrt{\alpha\beta}}\right) \mapsto \cdots$$ has been well-studied over the real numbers. The two entries of the ordered pairs rapidly approach the same limit \cite{Borwein_book}, called the \defn{arithmetic-geometric mean}. Using this process, Gauss showed that if $a_0 = 1$, $b_0 = \sqrt{2}$, $a_{n+1} := \frac{a_n + b_n}{2}$, and $b_{n+1} := \sqrt{a_n b_n}$, then
$$\lim_{n \to \infty} \frac{{a_n}^2}{1 - \sum_{i=0}^n 2^{i-1} ({a_i}^2 - {b_i}^2)} = \pi$$
with remarkably fast convergence, and he also connected the arithmetic-geometric mean to elliptic integrals \cite{Borwein_book}.

At first glance, the recursive relation $b_{n+1} := \sqrt{a_n b_n}$ presents both an obstacle and a potential ambiguity: If $a_n b_n$ is negative, it does not have any square roots in $\mathbb{R}$, and if $a_n b_n$ is positive, it has two real square roots, making it unclear which to choose. Conveniently, these two issues do not arise: Because $-1$ is not a square in $\mathbb{R}$, we know that exactly one choice of $b_{n+1} = \pm \sqrt{a_n b_n}$ results in $a_{n+1} b_{n+1}$ being a square, and always choosing this value for $b_{n+1}$ makes the arithmetic-geometric mean well-defined over $\mathbb{R}$.

In analogy with the classical study of AGM sequences, recent works have focused on finite field analogues. For finite fields with order congruent to 3 modulo 4, the residue class $-1$ is not a quadratic residue, as in the case of $\mathbb{R}$. In this case, the authors of \cite{MR4567422} show that the disjoint union of all AGM sequences is a directed graph where each graph looks like a ``jellyfish", which will be explained shortly. Here, we extend this to fields $\Fq$ with $q \equiv 1 \bmod 4$, especially focusing on $q \equiv 5 \bmod 8$.

The central object of this paper is the arithmetic-geometric mean (AGM) process over an arbitrary field with odd characteristic. This has been previously investigated in \cite{MR4567422, kayath2024agmaquariumsellipticcurves, mcspirit2023hypergeometryagmfinitefields}, but this paper will provide additional results, with a focus on $\Fq$ for $q \equiv 5 \bmod 8$. Much like \cite{MR4567422} and \cite{mcspirit2023hypergeometryagmfinitefields}, this paper will exclude nodes $(\alpha, \beta)$ where any of $\alpha$, $\beta$, $\alpha + \beta$, and $\alpha - \beta$ are equal to zero because these correspond to degenerate cases.

Specifically, over any field $K$ with characteristic not equal to two (so we can divide by two for the arithmetic mean), we define the set of \defn{nontrivial nodes} by
\begin{equation}S_K:=\set{(\alpha,\beta)\in K^2: \alpha,\beta,\alpha\pm \beta\neq 0},\end{equation}
and define a subset $F_K \subeq {S_K}^2$ by
\begin{equation}F_K:=\set{((\alpha,\beta),(\gamma,\delta))\in {S_K}^2: 2\gamma=\alpha+\beta, \delta^2=\alpha\beta}.\end{equation}
Thus, $F_K$ is the set of parent-child pairs of nontrivial nodes (where the parent is first) and thus a subset of ${S_K}^2$. When viewed as a directed graph, it refers to \defn{AGM advancement}. This directed graph can be viewed as a disjoint union of connected components that represents the AGM process over $K$. If $((\alpha, \beta),(\gamma, \delta))\in F_K,$ we also write
\begin{equation}(\alpha, \beta) \overset{\AGM}{\mapsto} (\gamma, \delta), \qquad (\alpha, \beta) \overset{\AGM_K}{\mapsto} (\gamma, \delta), \qquad (\gamma, \delta) \overset{\AGM_K}{\mapsfrom} (\alpha, \beta), \qquad \text{or simply } (\alpha, \beta) \mapsto (\gamma, \delta)\end{equation}
and say $(\alpha, \beta)$ \defn{advances} to $(\gamma, \delta)$ under the rules of the AGM (sometimes referred to as simply advancing), that $(\alpha, \beta)$ is $(\gamma, \delta)$'s \defn{parent}, or that $(\gamma, \delta)$ is $(\alpha, \beta)$'s \defn{child}.

Recent works of Griffin, McSpirit, Ono, Saikia, and Tsai \cite{MR4567422, mcspirit2023hypergeometryagmfinitefields} considered for the first time AGM over finite fields. They showed that if a prime power $q$ satisfies $q\equiv 3\bmod 4$, then the compilation of all iterated AGM sequences over $\Fq$ form a directed graph whose connected components have a specific structure called ``jellyfish". Moreover, by relating AGM to $2$-isogeny networks of elliptic curves over $\Fq$, they are able to prove statements and formulas that relate the sizes and/or the number of jellyfish to Gauss's or Hurwitz--Kronecker class numbers. Moreover, these elliptic curve isogeny networks are similar to work done by Sutherland in \cite{Volcanoes}, which can be viewed as a generalization of jellyfish but with undirected graphs.

\begin{figure}[h]
    \includegraphics[width=0.5\linewidth]{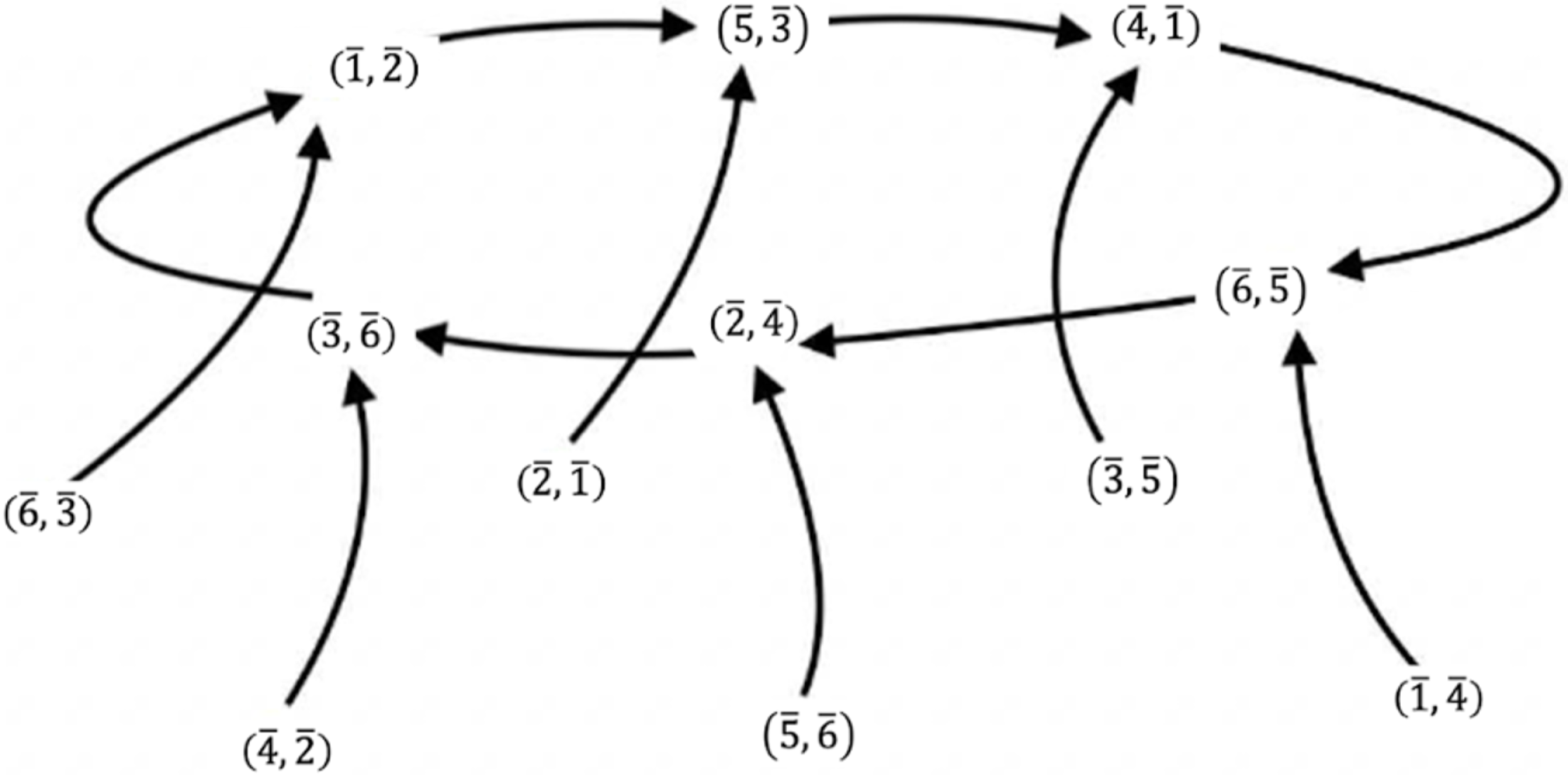} 
    \caption{The AGM over $\mathbb{F}_7$ consists of a single jellyfish.}
    \label{fig:forwards F7 jellyfish}
\end{figure}

An indispensable common feature in both settings above is that one can devise a rule to \emph{canonically} advance from a node to the next, so the starting node determines the entire iterated AGM sequence. Indeed, over $\R$, we always take the square root of $a_n b_n = {b_{n+1}}^2$ such that $b_{n+1}$ has the same sign as $a_{n+1}$.
Over $\Fq$ with $q\equiv 3\bmod 4$, we add an extra starting requirement that $a_0b_0$ is a square in $\Fq$, and during advancement we take $a_{n+1}=\dfrac{a_n+b_n}{2}$ and $b_{n+1}$ to be the unique square root of $a_nb_n$ such that $a_{n+1}b_{n+1}$ is a square; this rule works precisely because $-1$ is not a square in $\Fq$ and because the product of two non-squares is always a square in a finite field. In \cite[\S 5]{mcspirit2023hypergeometryagmfinitefields}, the authors raise the question of whether AGM can be defined on other finite fields. The answer turns out to be yes, but to facilitate this discussion, we first need definitions to distinguish between different types of nodes by how many times they can be advanced under the rules of AGM advancement. For example, over $\F_{29}$, the node $\left(\overline{1}, \overline{11}\right)$ cannot be advanced at all because $\overline{11}$ is not a square in $\F_{29}$, the node $\left(\overline{6}, \overline{25}\right)$ can be advanced once but not twice because $\left(\overline{6}, \overline{25}\right) \overset{\AGM_{\F_{29}}}{\mapsto} \left(\overline{1}, \overline{11}\right)$ and $\left(\overline{6}, \overline{25}\right) \overset{\AGM_{\F_{29}}}{\mapsto} \left(\overline{1}, \overline{18}\right)$ but $\overline{11}$ and $\overline{18}$ are not squares in $\F_{29}$ and $\left(\overline{6}, \overline{25}\right)$ has only two children, and the node $\left(\overline{13}, \overline{28}\right)$ can be advanced twice because $\left(\overline{13}, \overline{28}\right) \overset{\AGM_{\F_{29}}}{\mapsto} \left(\overline{6}, \overline{25}\right) \overset{\AGM_{\F_{29}}}{\mapsto} \left(\overline{1}, \overline{11}\right)$.

Given $n\geq 0$ and a field $K$ with characteristic not equal to two, we say a node $(a_0,b_0)\in S_K$ is \defn{$n$-times advanceable} if there exists an iterated AGM chain of length $n$ starting at $(a_0,b_0)$, namely,
\begin{equation}\agmchain,\end{equation}
in which case $(a_n, b_n)$ is called a \defn{descendant} of $(a_0, b_0)$, and $(a_0, b_0)$ is called an \defn{ancestor} of $(a_0, b_0)$. We say a node $(a_0,b_0)$ is \defn{indefinitely advanceable} if $(a_0,b_0)$ is $n$-times advanceable for all $n$.

The $(a_k, b_k)$s do not need to be all distinct. For example, over $\mathbb{F}_7$, the node $\left(\overline{1}, \overline{4}\right)$ is nine-times advanceable because
\begin{align*}
    \left(\overline{1}, \overline{4}\right) &\overset{\AGM_{\F_7}}{\mapsto} \left(\overline{6}, \overline{5}\right) \overset{\AGM_{\F_7}}{\mapsto} \left(\overline{2}, \overline{4}\right) \overset{\AGM_{\F_7}}{\mapsto} \left(\overline{3}, \overline{6}\right) \overset{\AGM_{\F_7}}{\mapsto} \left(\overline{1}, \overline{2}\right) \overset{\AGM_{\F_7}}{\mapsto} \left(\overline{5}, \overline{3}\right)
    \\\overset{\AGM_{\F_7}}{\mapsto} \left(\overline{4}, \overline{1}\right) &\overset{\AGM_{\F_7}}{\mapsto} \left(\overline{6}, \overline{5}\right) \overset{\AGM_{\F_7}}{\mapsto} \left(\overline{2}, \overline{4}\right) \overset{\AGM_{\F_7}}{\mapsto} \left(\overline{3}, \overline{1}\right)
\end{align*}
$$$$
even though $\left(\overline{6}, \overline{5}\right)$ and $\left(\overline{2}, \overline{4}\right)$ are repeated.

Define $\Sadv{n}{K}\subeq S_K$ to be the set of $n$-times advanceable nodes, and $\Sadv{\infty}{K}\subeq S_K$ to be the set of indefinitely advanceable nodes. Clearly,
$$S_K=\Sadv{0}{K}\supeq \Sadv{1}{K}\supeq \dots\supeq \Sadv{n}{K}\supeq \cdots \supeq \Sadv{\infty}{K}=\bigcap_{n\geq 0} \Sadv{n}{K}.$$
Furthermore, we define $F_K^{\mathrm{adv}_\infty}$ to be the induced subgraph of $F_K$ on the vertex set $\Sadv{\infty}{K}$, namely $F_K$ restricted to $\Sadv{\infty}{K}$.

\begin{example*}
    Using the previously-mentioned nodes, we have that $\left(\overline{1}, \overline{11}\right), \left(\overline{1}, \overline{18}\right) \notin S_{\F_{29}}^{\mathrm{adv}_1}$, $\left(\overline{6}, \overline{25}\right) \in S_{\F_{29}}^{\mathrm{adv}_1} \setminus S_{\F_{29}}^{\mathrm{adv}_2}$, $\left(\overline{13}, \overline{28}\right) \in S_{\F_{29}}^{\mathrm{adv}_2}$, and $\left(\overline{1}, \overline{4}\right) \in S_{\F_7}^{\mathrm{adv}_9}$. In fact, because the provided AGM sequence beginning at $\left(\overline{1}, \overline{4}\right)$ contains repeated nodes, we can conclude that $\left(\overline{1}, \overline{4}\right) \in S_{\F_7}^{\mathrm{adv}_\infty}$ because we can repeat the length-6 cycle arbitrarily many times via
    \begin{align*}
        \left(\overline{1}, \overline{4}\right) &\overset{\AGM_{\F_7}}{\mapsto} \left(\overline{6}, \overline{5}\right) \overset{\AGM_{\F_7}}{\mapsto} \left(\overline{2}, \overline{4}\right) \overset{\AGM_{\F_7}}{\mapsto} \left(\overline{3}, \overline{6}\right) \overset{\AGM_{\F_7}}{\mapsto} \left(\overline{1}, \overline{2}\right) \overset{\AGM_{\F_7}}{\mapsto} \left(\overline{5}, \overline{3}\right) \overset{\AGM_{\F_7}}{\mapsto} \left(\overline{4}, \overline{1}\right)
        \\&\overset{\AGM_{\F_7}}{\mapsto} \left(\overline{6}, \overline{5}\right) \overset{\AGM_{\F_7}}{\mapsto} \left(\overline{2}, \overline{4}\right) \overset{\AGM_{\F_7}}{\mapsto} \left(\overline{3}, \overline{6}\right) \overset{\AGM_{\F_7}}{\mapsto} \left(\overline{1}, \overline{2}\right) \overset{\AGM_{\F_7}}{\mapsto} \left(\overline{5}, \overline{3}\right) \overset{\AGM_{\F_7}}{\mapsto} \left(\overline{4}, \overline{1}\right)
        \\&\overset{\AGM_{\F_7}}{\mapsto} \left(\overline{6}, \overline{5}\right) \overset{\AGM_{\F_7}}{\mapsto} \cdots
    \end{align*}
    so every node in this cycle is in $S_{\F_7}^{\mathrm{adv}_\infty}$ for the same reason. In fact, for finite fields, a node is indefinitely advanceable if and only if it has some descendant in a cycle in $F_{\Fq}$.
\end{example*}

Under this framework, structural results about AGM over $\Fq$ with $\qthreemodfour$ can be succinctly summarized as follows.
\begin{theorem}[\cite{MR4567422}]
    If $q$ is a prime power with $\qthreemodfour$, then
    \begin{enumerate}
        \item (Stabilization) $\Sadv{\infty}{\Fq}=\Sadv{1}{\Fq}.$
        \item $\Sadv{\infty}{\Fq}=\set{(a,b)\in S_K: ab\in (\Fq^\times)^2}.$
        \item $\abs{\Sadv{\infty}{\Fq}}=\frac{(q-1)(q-3)}{2}.$
        \item The $F_{\Fq}$ directed graph restricted\footnote{This means we take the induced subgraph of $F_{\Fq}$ with vertex set $\Sadv{\infty}{\Fq}$.} to $\Sadv{\infty}{\Fq}$ is a \defn{function}, i.e., each node has out-degree $1$.
        \item Each connected component of the $F_{\Fq}$ directed graph restricted to $\Sadv{\infty}{\Fq}$ has a bell-head with length-one tentacles pointing to each of its nodes, constituting a directed analogue of a 2-volcano with depth 1 from \cite{Volcanoes}, as shown in Figure~\ref{fig:forwards F7 jellyfish}.
    \end{enumerate}\label{3mod4results}
\end{theorem}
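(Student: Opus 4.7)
The plan is to handle the five parts in a convenient order, leveraging throughout the key algebraic fact that $q\equiv 3\bmod 4$ forces $-1\notin(\Fq^\times)^2$, so that the product of two non-squares in $\Fq$ is automatically a square. I would first dispose of (1), (2), and (4) simultaneously. Given $(a,b)\in S_{\Fq}$ with $ab\in(\Fq^\times)^2$, the two candidate children are $(\gamma,\pm\delta)$ where $\gamma=\tfrac{a+b}{2}$ and $\delta^2=ab$; their products $\pm\gamma\delta$ differ by a non-square, so exactly one is a square. A quick check (using that $\gamma=\pm\delta$ would force $a=b$ or $a+b=0$, both forbidden in $S_{\Fq}$) shows the chosen child lies in $S_{\Fq}$, yielding a well-defined canonical AGM function $\phi$ on $\Sadv{1}{\Fq}=\{(a,b)\in S_{\Fq}:ab\in(\Fq^\times)^2\}$. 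Iterating $\phi$ gives $\Sadv{1}{\Fq}\subseteq\Sadv{\infty}{\Fq}$, collapsing the chain of inclusions; parts (1), (2), and (4) all follow at once.

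For (3), I would count directly. Pairs $(a,b)\in(\Fq^\times)^2$ with $ab\in(\Fq^\times)^2$ are exactly those for which $a,b$ are both squares or both non-squares, giving $2\cdot\bigl(\tfrac{q-1}{2}\bigr)^2=\tfrac{(q-1)^2}{2}$ pairs. Subtracting the $q-1$ diagonal pairs $a=b$ (and noting that $a=-b$ makes $ab=-a^2$ a non-square, requiring no further removal) yields $\tfrac{(q-1)(q-3)}{2}$.

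The main obstacle is (5), the jellyfish structure, which I would prove in four steps. First, analyze parents: for $(\gamma,\delta)\in S_{\Fq}$, the potential parents are the roots of $x^2-2\gamma x+\delta^2=0$ with discriminant $4(\gamma-\delta)(\gamma+\delta)$, so parents exist iff this product is a nonzero square, and when they do there are exactly two ordered parents forming a swapped pair $(\alpha,\beta),(\beta,\alpha)$. Second, verify that both parents lie in $\Sadv{\infty}{\Fq}$ (both have product $\delta^2$, a square) and both advance to $(\gamma,\delta)$ under $\phi$, so $\phi$ is exactly $2$-to-$1$ onto its image. Third, and this is where the hypothesis $q\equiv 3\bmod 4$ is indispensable, show that $\phi$ is injective on $\phi(\Sadv{\infty}{\Fq})$: if $(\alpha,\beta)\in\phi(\Sadv{\infty}{\Fq})$ then $\alpha^2-\beta^2$ is a nonzero square, but then $(\beta,\alpha)\in\phi(\Sadv{\infty}{\Fq})$ would force $-(\alpha^2-\beta^2)$ also to be a square, contradicting $-1\notin(\Fq^\times)^2$. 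Fourth, conclude: this injectivity forces $\phi(\Sadv{\infty}{\Fq})=\phi^2(\Sadv{\infty}{\Fq})=\cdots$, so the image stabilizes after one step and coincides with the set of cycle nodes. Consequently every cycle node has in-degree $2$ (one cyclic predecessor plus one pendant tentacle), every other node has in-degree $0$, and each cycle node carries exactly one tentacle---precisely the bell-head-with-depth-one-tentacles jellyfish, i.e., a directed $2$-volcano of depth $1$.
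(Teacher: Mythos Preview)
Your proof is correct. Note, however, that the paper does not actually prove this theorem: it is stated as a summary of results from \cite{MR4567422} (Griffin--Ono--Saikia--Tsai), and the paper's own contributions concern the $q\equiv 5\bmod 8$ case. The paper does sketch the key idea behind part~(5) in the paragraph preceding Theorem~\ref{5mod8backtrackingresults}, and your argument matches that sketch exactly: the two parents of any node are a swapped pair $(A,B),(B,A)$, and since $-1$ is a non-square at most one of $A^2-B^2$ and $B^2-A^2$ can be a square, so at most one parent itself has a parent. Your packaging of this observation as ``$\phi$ is $2$-to-$1$ onto its image and injective on that image, hence the image stabilizes after one step and equals the cycle set'' is a clean way to organize the same content; it is precisely the argument the paper later generalizes (with squares replaced by fourth powers and one step by two) in Lemmas~\ref{advanceability extension} and~\ref{backtrackability extension} and Theorem~\ref{tentacle colon structure}.
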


Note that the properties above are intrinsic properties of the AGM directed graph without reference to the artificial advancement rule; rather, the unique advancement rule \emph{emerges} from the assertion (4). While properties (1)-(4) are trivial in the $\qthreemodfour$ setting thanks to the non-squareness of $-1$, asking for the analogue of \emph{each} of them over other finite fields poses a separate question whose answer remains unclear.

Our first set of results generalizes each of these to $q \equiv 5 \bmod 8$ and requires the arithmetic of the ``congruent number" elliptic curve
\begin{equation}E:\ y^2=x^3-x.\end{equation}
This curve has complex multiplication with endomorphism ring $\mathbb{Z}\!\left[\sqrt{-1}\right]$, which implies \cite{MR1401749} that its \defn{trace of Frobenius} at an odd prime power $p^t$ is
\begin{equation}a_{p^t} = \pi^t + \overline{\pi}^t, \quad \text{where } \pi = \begin{cases}\sqrt{-p} & \text{if } p \equiv 3 \bmod 4 \\ (-1)^{n+m}(2n+1) + 2mi & \text{if } p \equiv 1 \bmod 4,\end{cases}\label{trace of frobenius formula}\end{equation}
where $n$ and $m$ are nonnegative integers satisfying $p = 4m^2 + (2n+1)^2$.

\begin{theorem}\label{5mod8results}
    If $q$ is a prime power with $\qfivemodeight$, then
\begin{enumerate}
    \item (Stabilization) $\Sadv{\infty}{\Fq}=\Sadv{2}{\Fq}.$
    \item $\Sadv{\infty}{\Fq}=\set{(a,b)\in S_K: 4ab(a+b)^2\in (\Fq^\times)^4}$, and is nonempty if and only if $q\neq 5,13.$
    \item $\abs{\Sadv{\infty}{\Fq}}=\frac{(q-1)}{4}(q-7-a_q),$
    where $a_q$ is the Frobenius trace of the elliptic curve $y^2=x^3-x$ over $\Fq$.
    \item Each node in $\Sadv{\infty}{\Fq}$ has out-degree one in $F_{\Fq}^{\mathrm{adv}_\infty}$, corresponding to advancement over $\Sadv{\infty}{\Fq}$ being a single-valued function.
    \item Each connected component of $F_{\Fq}^{\mathrm{adv}_\infty}$ has a bell-head with length-two, $Y$-shaped tentacles pointing to each of its nodes, constituting a directed analogue of a 2-volcano with depth 2 from \cite{Volcanoes}, as shown in Figure~\ref{fig:forwards F29 jellyfish}.
\end{enumerate}
\end{theorem}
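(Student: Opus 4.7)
The plan is to address the five parts in sequence, centered on the single invariant $\phi(a,b):=4ab(a+b)^2$, viewed in $\Fq^\times/(\Fq^\times)^4 \cong \Z/4\Z$. Parts (1), (2), and (4) will follow from a Vieta-like identity at a single AGM step; part (3) from elliptic-curve point counting; and part (5) from in-degree analysis together with a volcano-depth-two argument.

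First I would establish (2) for $\Sadv{2}{\Fq}$ by direct computation: $(a,b)\in \Sadv{2}{\Fq}$ iff $\phi(a,b)\in (\Fq^\times)^4$, a condition that already subsumes $ab\in (\Fq^\times)^2$ upon dividing by the square $4(a+b)^2$. For parts (1) and (4), let $\phi_\pm:=\phi(a',\pm \sqrt{ab})$ denote the invariant at the two candidate children. Expanding yields
\[
\phi_+ + \phi_- = \phi(a,b), \qquad \phi_+\phi_- = -\frac{\phi(a,b)(a-b)^4}{16}.
\]
When $[\phi(a,b)]=0$, each $\phi_\pm$ equals $\pm\tfrac{(a+b)\sqrt{ab}}{2}$ times a square, and the leading factor has square $\phi(a,b)/16 \in (\Fq^\times)^4$; combined with $-1\in (\Fq^\times)^2$ (as $q\equiv 1\bmod 4$), this leading factor itself is a square, so $[\phi_\pm]\in\{0,2\}$. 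Meanwhile $[\phi_+\phi_-]=[-1]=2$ in $\Z/4\Z$, using $q\equiv 5\bmod 8$ so that $-1$ is not a fourth power. Hence $\{[\phi_+],[\phi_-]\}=\{0,2\}$: exactly one child lies in $\Sadv{2}{\Fq}$. Iterating shows $\Sadv{2}{\Fq}\subseteq \Sadv{\infty}{\Fq}$ (part (1)), and the uniqueness of the ``winning'' child is (4).

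For (3), I would reparametrize nodes with $ab\in (\Fq^\times)^2$ by $(u,v)\in (\Fq^\times)^2 \mapsto (u,v^2/u)$, a 2-to-1 map. The condition $\phi(a,b)\in (\Fq^\times)^4$ becomes $2uv(u^2+v^2)\in (\Fq^\times)^2$, and since $2$ is a non-square for $q\equiv 5\bmod 8$, this is equivalent to $uv(u^2+v^2)$ being a non-square. Dehomogenizing by $w=v/u$ reduces the count to $w\in \Fq^\times\setminus \{\pm 1,\pm i\}$ (noting $i\in \Fq$ as $q\equiv 1\bmod 4$) with $w(1+w^2)$ a non-square. I would point-count on the curve $E':y^2=x(x^2+1)$, which is the quadratic twist of the congruent number curve $E:y^2=x^3-x$ by $i$ (a non-square for $q\equiv 5\bmod 8$), giving $\abs{E'(\Fq)}=q+1+a_q$; subtracting the three $y=0$ points and $\infty$ and dividing by $2$ gives $(q-3+a_q)/2$ squares among candidate $w$, hence $(q-7-a_q)/2$ non-squares. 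Multiplying by $(q-1)/2$ from the reparametrization yields $\abs{\Sadv{\infty}{\Fq}} = (q-1)(q-7-a_q)/4$. Nonemptiness for $q\neq 5,13$ follows by direct check at $q=5,13$ (where the formula yields zero) and from the Hasse bound $\abs{a_q}\leq 2\sqrt{q}$ for $q\geq 29$.

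Finally for (5), I would note that $F_{\Fq}^{\mathrm{adv}_\infty}$ is a functional graph by (4), and that parents in $S_K$ come in swap pairs $\{(a,b),(b,a)\}$ with common $\phi$-value $(4a'b')^2$, automatically a fourth power since $a'b'\in (\Fq^\times)^2$. So each node has in-degree $0$ or $2$ in $F_{\Fq}^{\mathrm{adv}_\infty}$, and each connected component is thus a directed cycle (the bell-head) with attached binary trees (tentacles). To show each tentacle has length exactly $2$ and is $Y$-shaped, I would trace: a source $(a,b)$ (in-degree $0$, requiring $a^2-b^2$ to be a non-square) has $(b,a)$ as a sibling source by a parallel square/non-square analysis, and the two share a common canonical child; that child has $a_1^2-b_1^2=(a-b)^2/4\in (\Fq^\times)^2$, forcing in-degree $2$ from the two source parents, and its own child then lands on the bell-head. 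The main obstacle is this depth-exactly-two claim: ruling out deeper tentacles requires finer $\Z/4\Z$-bookkeeping of the $\phi$-classes iterated down the chain, essentially identifying AGM over $\Sadv{\infty}{\Fq}$ as the directed analogue of a $2$-volcano of depth $2$ in Sutherland's sense.
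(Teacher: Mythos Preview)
Your treatment of parts (1), (2), and (4) through the invariant $\phi(a,b)=4ab(a+b)^2$ and the product identity $\phi_+\phi_-=-\phi(a,b)(a-b)^4/16$ is the paper's argument in different notation: the paper's Lemmas~\ref{fourth power lemma}--\ref{advanceability extension} work with $A=\phi_+/16$, $B=\phi_-/16$ and observe that $AB$ is $-1$ times a fourth power, forcing exactly one of the two squares $A,B$ to be a fourth power.

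For part (3) you take a different but correct route. The paper reduces to the ratio $k=b/a$, substitutes $k=x^2$, and lands directly on $y^2=2x(1+x^2)$, which it then identifies with $y^2=x^3-x$ over $\Fq$ because $-4\in(\Fq^\times)^4$. Your parametrization $(u,v)\mapsto(u,v^2/u)$ instead leads to counting $w$ with $w(1+w^2)$ a \emph{non}-square, and you point-count on the nontrivial quadratic twist $y^2=x^3+x$; the sign flip $a_q\mapsto -a_q$ from twisting cancels against the complementary count. One detail you gloss over: the excluded values $w=\pm 1$ give $w(1+w^2)=\pm 2$, a non-square for $q\equiv 5\bmod 8$, so removing them costs exactly $2$ from the non-square side --- this is how $(q-5)-(q-3+a_q)/2$ becomes $(q-7-a_q)/2$.

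Part (5) has the gap you flag, and it is not minor. Showing that sources pair up as $(a,b),(b,a)$ and feed a common in-degree-$2$ child $c_1$ is fine, but ``its own child then lands on the bell-head'' is precisely the content to be proved. The paper handles this by building a parallel backtracking theory: Lemma~\ref{backtracking twice} shows $(a',b')\in\Sback{2}{\Fq}$ iff $a'^2(a'^2-b'^2)\in(\Fq^\times)^4$, and Lemma~\ref{backtrackability extension} runs a second ``product has class $2$'' computation, namely $A^2(A^2-B^2)\cdot B^2(B^2-A^2)=-(AB)^2(A^2-B^2)^2$, to show that exactly one of the two parents of such a node is again in $\Sback{2}{\Fq}$. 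This yields $\Sback{2}{\Fq}=\Sback{\infty}{\Fq}$, and intersecting with $\Sadv{\infty}{\Fq}$ picks out exactly the cyclic nodes; the depth-two $Y$-shape then follows top-down (Theorem~\ref{tentacle colon structure}). In your in-degree language, the missing step is to show that the nodes with \emph{both} parents non-sources are characterized by $a'^2(a'^2-b'^2)\in(\Fq^\times)^4$ (since the parents satisfy $A^2-B^2=\pm 4a'\sqrt{a'^2-b'^2}$), and that restricted to such nodes the graph has in-degree $1$ as well as out-degree $1$, hence is a disjoint union of cycles. You would need to carry out this second $\Z/4\Z$ computation explicitly to close the argument; it is not a consequence of the first one.
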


\begin{example*}
    We consider one directed graph component for $q = 29$ for Theorem~\ref{5mod8results}.
    \begin{figure}[h!]
        \includegraphics[width=0.5\linewidth]{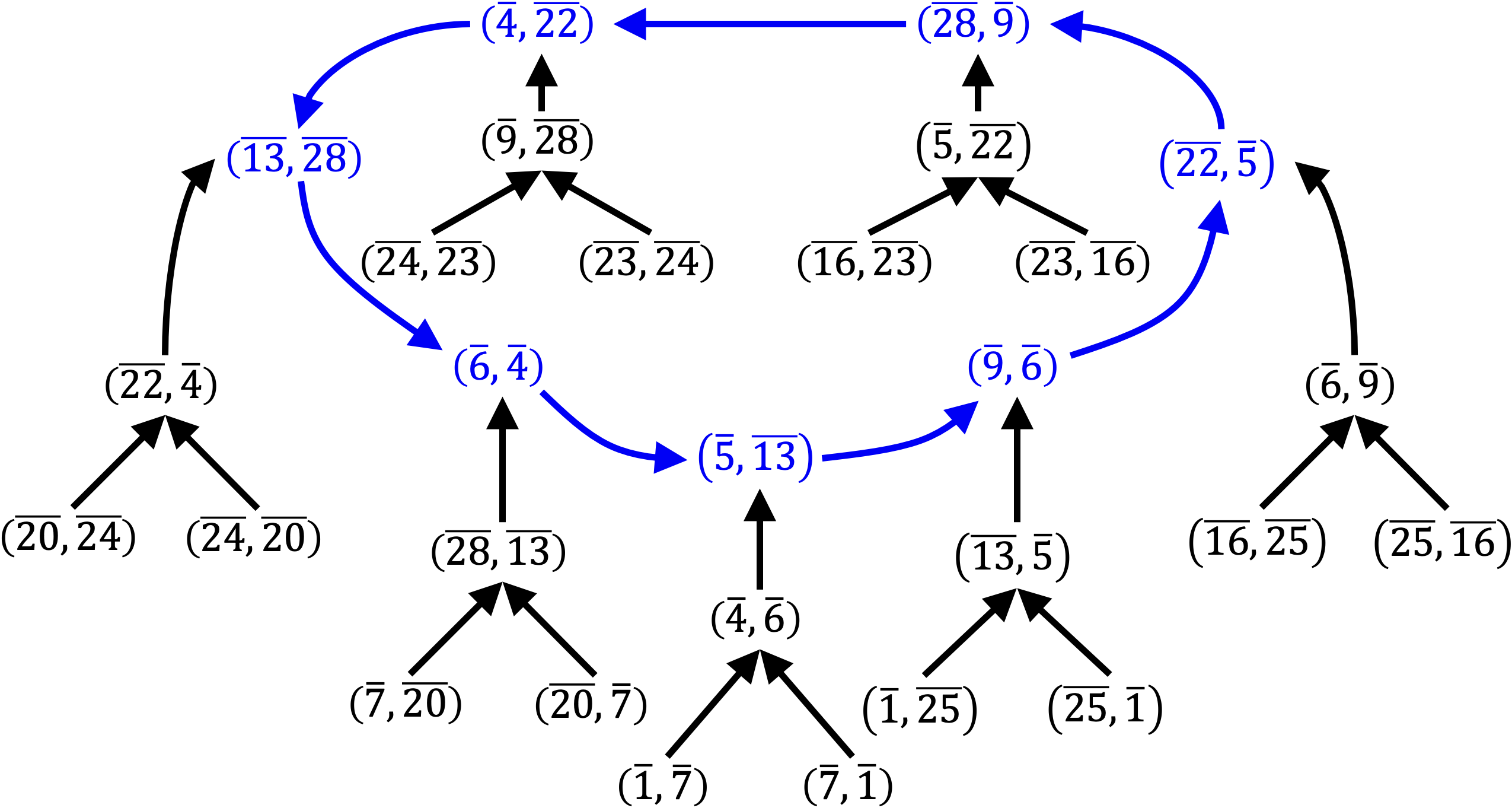}
        \caption{One connected component of $F_{\F_{29}}^{\mathrm{adv}_\infty}$, demonstrating the bell-head cycle and branched length-two tentacles, with the cyclic nodes and cyclic arrows colored blue}
        \label{fig:forwards F29 jellyfish}
    \end{figure}
\end{example*}

One key feature for the proof of Theorem~\ref{5mod8results} is that we are able to determine certain facts about the structure of a jellyfish, such as tentacle length. This feature can be understood using the concept of backtracking in the AGM sequence, which was originally employed in \cite{MR4567422} to show, for $q \equiv 3 \bmod 4$, that each tentacle has length one. Namely, the reason that there are no tentacles of length at least 2 for $q \equiv 3 \bmod 4$ follows by contradiction: If there were tentacles of length 2 or longer, then some node must have two parents, $(A, B)$ and $(B, A)$, that both have parents, but then both $A^2 - B^2$ and $B^2 - A^2$ would have to be squares, which is impossible (for nontrivial nodes) because $-1$ is not a square \cite{MR4567422}. As differences in backtrackability show that the jellyfish shown come with different sturctures for $q \equiv 3 \bmod 4$ and $q \equiv 5 \bmod 8$, we now turn to the study of backtracking in the AGM.

Analogously to the definition of $n$-times advanceability, for $n \geq 0$, we say a node $(a_0, b_0)$ in $S_K$ is \defn{$n$-times backtrackable} if there exist $(a_{-1}, b_{-1}), (a_{-2}, b_{-2}), \mathellipsis, (a_{-n}, b_{-n}) \in S_K$ such that
\begin{equation}(a_{-n}, b_{-n}) \overset{\text{AGM}}{\mapsto} (a_{1-n}, b_{1-n}) \overset{\text{AGM}}{\mapsto} \cdots \overset{\text{AGM}}{\mapsto} (a_{-1}, b_{-1}) \overset{\text{AGM}}{\mapsto} (a_0, b_0),\end{equation}
and we define $\Sback{n}{K}$ to be the set of these $n$-times backtrackable nodes $(a_0, b_0)$. Similarly, we define $\Sback{\infty}{K} = \bigcap_{n \in \mathbb{N}} \Sback{n}{K}$, denoted the set of indefinitely backtrackable nodes, and we define $F_K^{\mathrm{back}_\infty}$ as the restriction of $F_K$ to $\Sback{\infty}{K}$. Note that $F_K^{\mathrm{back}_\infty}$ still has arrows with directions corresponding to AGM-advancement, not AGM-backtracking. It turns out that similar results for AGM-advancement apply to AGM-backtracking.
\begin{theorem}\label{5mod8backtrackingresults}
    If $q$ is a prime power with $\qfivemodeight$, then
\begin{enumerate}
    \item (Stabilization) $\Sback{\infty}{\Fq}=\Sback{2}{\Fq}.$
    \item $\Sback{\infty}{\Fq}=\set{(a,b)\in S_K: a^2(a^2-b^2)\in (\Fq^\times)^4}$, and is nonempty if and only if $q\neq 5,13.$
    \item $\abs{\Sback{\infty}{\Fq}}=\frac{(q-1)}{4}(q-7-a_q),$
    where $a_q$ is the Frobenius trace of the elliptic curve $y^2=x^3-x$ over $\Fq$.
    \item Each node in $\Sback{\infty}{\Fq}$ has in-degree one in $F_{\Fq}^{\mathrm{back}_\infty}$, corresponding to backtracking over $\Sback{\infty}{\Fq}$ being a single-valued function.
    \item Each connected component of $F_{\Fq}^{\mathrm{back}_\infty}$ has a bell-head with length-two, $Y$-shaped reverse-tentacles (called \defn{colons}) pointing to each of its nodes, constituting a directed analogue of a 2-volcano with depth 2 from \cite{Volcanoes}, an example of which is shown in Figure~\ref{fig:backwards F29 jellyfish}.
    \begin{figure}[h]
      \includegraphics[width=0.6\linewidth]{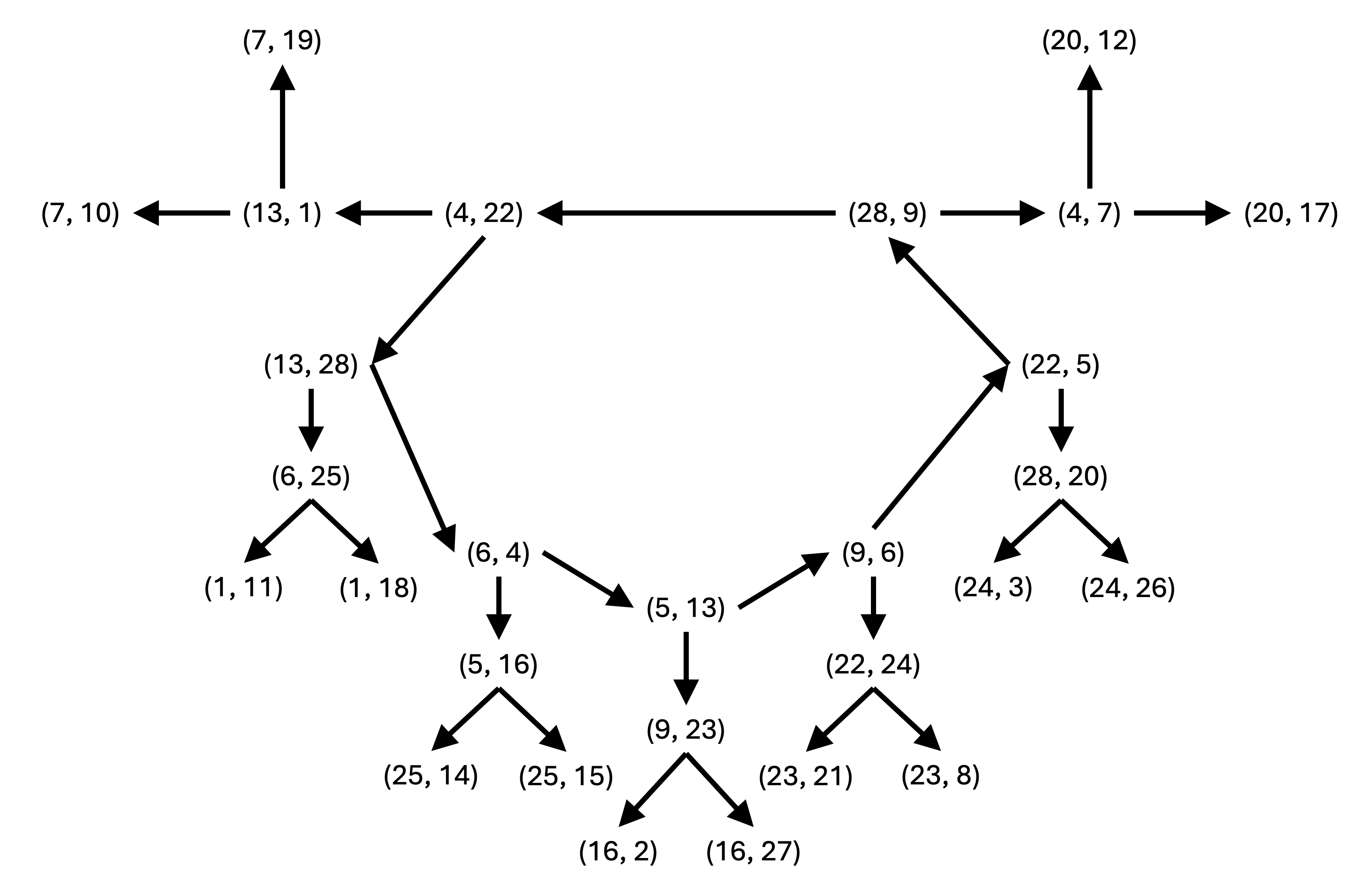}
      \caption{One connected component of $F_{\F_{29}}^{\mathrm{back}_\infty}$, showing the same structure as a connected component of $F_{\F_{29}}^{\mathrm{adv}_\infty}$ except with arrows pointed in the opposite direction}
      \label{fig:backwards F29 jellyfish}
    \end{figure}
\end{enumerate}
\end{theorem}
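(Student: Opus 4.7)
The plan is to mirror the proof of Theorem~\ref{5mod8results}, working through the backtracking conditions by explicit quadratic analysis, deducing single-valuedness and stabilization simultaneously via quartic class arithmetic, and reducing the count in (3) to a character sum over the congruent number curve $E\colon y^2 = x^3 - x$. First I would determine $\Sback{n}{\Fq}$ for $n = 1, 2$ by direct quadratic analysis. A node $(a,b)$ is $1$-backtrackable iff the quadratic $T^2 - 2aT + b^2$ splits over $\Fq$, i.e.\ iff $a^2 - b^2 \in (\Fq^\times)^2$; in that case its two parents are $(a+e, a-e)$ and $(a-e, a+e)$ with $e = \sqrt{a^2 - b^2}$. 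For $(a,b) \in \Sback{2}{\Fq}$ some parent must itself be $1$-backtrackable, i.e.\ $\pm 4ae \in (\Fq^\times)^2$; because $q \equiv 1 \bmod 4$ makes $-1$ a square, this collapses to $ae \in (\Fq^\times)^2$, equivalently $a^2(a^2-b^2) \in (\Fq^\times)^4$, giving the description in (2).

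The crucial step shows that whenever $(a,b) \in \Sback{2}{\Fq}$, exactly one of its two parents is itself $2$-backtrackable. Their $2$-backtrackability invariants evaluate to $4ae(a+e)^2$ and $-4ae(a-e)^2$, and the identity $(a+e)(a-e) = b^2$ forces $a+e$ and $a-e$ to share a class in $\Fq^\times/(\Fq^\times)^2$. Passing to $\Fq^\times/(\Fq^\times)^4 \cong \Z/4$ and using that $q \equiv 5 \bmod 8$ makes $2$ a non-square (so $4 \notin (\Fq^\times)^4$ while $-4 \in (\Fq^\times)^4$), a short class computation shows that exactly one of $4ae(a+e)^2$ and $-4ae(a-e)^2$ lies in $(\Fq^\times)^4$. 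Iterating produces an infinite chain of $2$-backtrackable ancestors of any $(a,b) \in \Sback{2}{\Fq}$, yielding stabilization $\Sback{2}{\Fq} = \Sback{\infty}{\Fq}$ (part (1)) and the single-valued parent map on $\Sback{\infty}{\Fq}$ (part (4)). Statement (5) then follows by the same bell-head-plus-trees argument as in the advancement case: reversing arrows turns $F_{\Fq}^{\mathrm{back}_\infty}$ into a functional graph whose components are cycles with attached trees, and a companion check that great-grandchildren of cycle nodes fail the $1$-advanceability condition (mirroring the analogous check in Theorem~\ref{5mod8results}(5)) forces depth exactly two and the $Y$-shape.

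For the count in (3) I would parametrize $\Sback{\infty}{\Fq}$ by $(a, t)$ with $t = b/a$: since $a^2(a^2-b^2) = a^4(1 - t^2)$, fourth-power membership depends only on $t$, so
$$|\Sback{\infty}{\Fq}| = (q-1)\cdot\#\set{t \in \Fq\setminus\{0,\pm 1\} : 1 - t^2 \in (\Fq^\times)^4}.$$
Using $\tfrac{1}{4}\sum_{j=0}^{3} \chi_4^j(x)$ as the fourth-power indicator (with $\chi_4$ a character of order four), the count splits into three character sums. The quadratic contribution $\sum_t \chi_2(1-t^2) = -1$ follows from counting points on the conic $u^2 + t^2 = 1$, and the substitution $u = 1-t$ identifies $\sum_t \chi_4(1-t^2)$ with $\chi_2(2) \cdot J(\chi_4, \chi_4)$. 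Since $\chi_2(2) = -1$ for $q \equiv 5 \bmod 8$, and $2\operatorname{Re} J(\chi_4, \chi_4)$ matches the Frobenius trace $a_q$ via the classical identification of $J(\chi_4, \chi_4)$ with a Frobenius eigenvalue on $E$ (exploiting CM by $\Z[\sqrt{-1}]$ and \eqref{trace of frobenius formula}), the count simplifies to $(q-1)(q-7-a_q)/4$; non-emptiness $q \neq 5, 13$ is verified by plugging in the small values of $a_q$.

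The main obstacle is the class-arithmetic step proving uniqueness of the $2$-backtrackable parent: this is precisely where the congruence $q \equiv 5 \bmod 8$ (as opposed to $q \equiv 1 \bmod 8$) really bites, via $-4 \in (\Fq^\times)^4$ but $4 \notin (\Fq^\times)^4$, and a sloppier analysis would give both or neither parent being $2$-backtrackable. Once this hinge is established, stabilization, single-valuedness, and the depth-two structural claim follow by iteration and short companion computations, with the Jacobi-sum evaluation in (3) the other substantial but classical piece.
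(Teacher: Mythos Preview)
Your treatment of parts (1), (2), and (4) is essentially the paper's own argument (Lemmas~\ref{backtracking once}, \ref{backtracking twice}, \ref{backtrackability extension}, Corollary~\ref{backtracking twice to infinite}). Your class computation for the uniqueness of the twice-backtrackable parent is the same identity the paper uses: the product of the two parents' invariants is $-16a^2e^2b^4$, which the paper expresses via the grandparent as $-b_n^{\,4}\bigl((a_{n-2}-b_{n-2})/2\bigr)^4$; either way it is $-1$ times a fourth power, and since both invariants are squares exactly one is a fourth power. For (5) your sketch matches Theorem~\ref{tentacle colon structure}, with the caveat that the depth bound there is obtained by applying the \emph{forward} Lemmas~\ref{fourth power lemma} and~\ref{advanceability extension} to a cycle node: its non-cyclic child lies in $\Sadv{1}{\Fq}\setminus\Sadv{2}{\Fq}$, so that child's two children (the depth-two colon nodes) are not once-advanceable---your ``great-grandchildren'' is off by one.

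Your route to (3) is genuinely different. The paper never recounts $\Sback{\infty}{\Fq}$ directly: Theorem~\ref{quarter overlap} partitions $\Sadv{\infty}{\Fq}$ into quadruples sharing a common indefinitely-advanceable grandchild, with exactly one member of each quadruple lying in $\Sback{\infty}{\Fq}$ (and symmetrically), whence $\abs{\Sback{\infty}{\Fq}}=\abs{\Sadv{\infty}{\Fq}}$ and the advancement count from Corollary~\ref{SFqadvinfty population} is quoted. Your Jacobi-sum computation is valid and more self-contained; the substitution $s=1-t$ followed by $s=2r$ does give $\sum_t\chi_4(1-t^2)=\chi_2(2)\,J(\chi_4,\chi_4)$ with $\chi_2(2)=-1$. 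The one point needing care is the sign in $2\operatorname{Re}J(\chi_4,\chi_4)=a_q$: both sides have the form $\pm 2A$ with $q=A^2+B^2$, $A$ odd, and pinning the sign down requires normalizing $\chi_4$ against the Frobenius eigenvalue in~\eqref{trace of frobenius formula}, which the paper's bijective argument sidesteps. A third route, noted in Remark~\ref{lambda backtracking criteria} but not used formally, is the explicit birational map between $\mu^4=1-k^2$ and $y^2=2x(1+x^2)$.
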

\begin{remark*}
    While Theorem~\ref{5mod8results} and Corollary~\ref{introduction reversal} imply Theorem~\ref{5mod8backtrackingresults} (5), our proof of Corollary~\ref{introduction reversal} relies on Theorem~\ref{5mod8backtrackingresults} (5). We are not aware of a general construction that gives the isomorphism in Corollary~\ref{introduction reversal}; it is a consequence of our structure theorem.
\end{remark*}

The similarities between Theorem~\ref{5mod8results} and Theorem~\ref{5mod8backtrackingresults} suggests that there is a contravariant isomorphism between $F_{\Fq}^{\mathrm{adv}_\infty}$ and $F_{\Fq}^{\mathrm{back}_\infty}$. This turns out to be the case. In fact, this contravariant isomorphism also applies when $q \equiv 3 \bmod 4$, so the results of Theorem~\ref{3mod4results} can be turned into similar statements about $S_{\Fq}^{\mathrm{back}_\infty}$ and $F_{\Fq}^{\mathrm{back}_\infty}$ in an analogous manner to how the results of Theorem~\ref{5mod8results} can be extended to Theorem~\ref{5mod8backtrackingresults}. (This partially follows from \cite{MR4567422} but also requires results about $F_{\Fq}^{\mathrm{back}_\infty}$ for $\qthreemodfour$ that were not covered in \cite{MR4567422} or in \cite{mcspirit2023hypergeometryagmfinitefields}.)

\begin{corollary}\label{introduction reversal}
    If $q$ is a prime power with $\qthreemodfour$ or $\qfivemodeight$, then $F_{\Fq}^{\mathrm{back}_\infty}$'s reversal (i.e., $F_{\Fq}^{\mathrm{back}_\infty}$ with arrow directions reversed) is isomorphic to $F_{\Fq}^{\mathrm{adv}_\infty}$ as a directed graph. (This shall also be referred to as $F_{\Fq}^{\mathrm{back}_\infty}$ having a contravariant isomorphism with $F_{\Fq}^{\mathrm{adv}_\infty}$.)
\end{corollary}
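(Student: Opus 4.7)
The plan is to reduce the statement to the rigid component-level descriptions already contained in the earlier structure theorems, rather than to construct the isomorphism by a formula.

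First I would verify that the ``bell-head'' cycle subgraphs of $F_{\Fq}^{\mathrm{adv}_\infty}$ and $F_{\Fq}^{\mathrm{back}_\infty}$ literally coincide as subgraphs of $F_{\Fq}$. A node lying on a directed cycle of $F_{\Fq}$ can be iterated forward indefinitely (by going around the cycle) and backward indefinitely (by traversing in reverse), so it belongs to $\Sadv{\infty}{\Fq}\cap\Sback{\infty}{\Fq}$. Conversely, any node in $\Sadv{\infty}{\Fq}\cap\Sback{\infty}{\Fq}$ must itself be cyclic: single-valuedness of advancement on $\Sadv{\infty}{\Fq}$ (Theorem~\ref{3mod4results}(4), Theorem~\ref{5mod8results}(4)) together with the finiteness of $\Fq$ forces its forward orbit to land on a cycle, and indefinite backtrackability then forces the node itself to lie on that cycle (otherwise following the unique advancement arrows backward would eventually leave $\Sback{\infty}{\Fq}$). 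Since both graphs inherit arrows from $F_{\Fq}$, they agree on this common cyclic vertex set, so in particular they have the same multiset of bell-head cycles, with identical arrows among them.

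Second, the structure theorems (Theorems~\ref{3mod4results}(5), \ref{5mod8results}(5), \ref{5mod8backtrackingresults}(5), together with the $\qthreemodfour$ backtracking analogue established elsewhere in the paper) describe every connected component of $F_{\Fq}^{\mathrm{adv}_\infty}$ or $F_{\Fq}^{\mathrm{back}_\infty}$ as a directed cycle with an \emph{identical} appendage rooted at each cyclic node: a single-edge tentacle when $\qthreemodfour$, and a length-two $Y$-shaped tentacle when $\qfivemodeight$. Reversing all arrows of $F_{\Fq}^{\mathrm{back}_\infty}$ turns each colon appendage into one with exactly the same abstract directed shape as the corresponding tentacle of $F_{\Fq}^{\mathrm{adv}_\infty}$. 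Hence the isomorphism type of a connected component in either graph, or in the reversal, is determined solely by the length of its bell-head cycle.

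Finally, I would assemble the global isomorphism $\mathrm{rev}(F_{\Fq}^{\mathrm{back}_\infty})\to F_{\Fq}^{\mathrm{adv}_\infty}$ component by component: Step 1 pairs the components canonically through the unique bell-head cycle each one contains; for each matched pair, Step 2 says the two components are cycles of the same length decorated with identical uniform appendages, so any cycle-respecting bijection extends over the unbranched part of each appendage (uniquely) and over each $Y$-branch point (by an arbitrary choice of which arm maps to which) to give an isomorphism, and these local isomorphisms glue to the desired global one. The main obstacle is really Step 1, since one must confirm that \emph{no} arrow between two cyclic nodes is lost when restricting from $F_{\Fq}$ to either induced subgraph; once the cycle subgraphs are seen to be identical, Step 2 is a direct translation of the structure theorems, and Step 3 is a routine gluing. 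The non-canonicity of the resulting $\Phi$ is precisely the phenomenon flagged in the remark following Theorem~\ref{5mod8backtrackingresults}.
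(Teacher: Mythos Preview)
Your proposal is correct and follows essentially the same approach as the paper: both arguments reduce to the structure theorems (Theorems~\ref{3mod4results}(5), \ref{5mod8results}(5), \ref{5mod8backtrackingresults}(5), and the $\qthreemodfour$ backtracking analogue), observe that the cyclic core $F_{\Fq}^{\mathrm{cyc}}$ consists of simple directed cycles and is therefore contravariantly self-isomorphic, and then use the fact that tentacles and colons have identical abstract shapes to extend this to a global contravariant isomorphism. Your Step~1 spells out more explicitly why $\Sadv{\infty}{\Fq}\cap\Sback{\infty}{\Fq}$ coincides with the set of cyclic nodes (the paper simply invokes $S_{\Fq}^{\mathrm{cyc}}$ by definition), but otherwise the decomposition into ``match the cycles, then match the appendages'' is exactly what the paper does in Corollary~\ref{node-reversal}.
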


In view of Corollary~\ref{introduction reversal}, a natural question is whether the conclusion of Corollary~\ref{introduction reversal} holds for arbitrary finite field with odd characteristic. In fact, if one could prove a stronger statement that $F_{\Fq}$'s reversal and $F_{\Fq}$ are isomorphic as directed graphs for any $\Fq$ with odd characteristic, then it would automatically imply the conclusion of Corollary~\ref{introduction reversal} and that the stabilization of $\Sadv{n}{\Fq}$ and $\Sback{n}{\Fq}$ occur at the same $n$.

To this end, we offer Theorem~\ref{population reversal main theorem} as evidence towards the potential contravariant automorphism of $F_{\Fq}$ by demonstrating certain symmetries involving several interesting graphical features of $F_{\Fq}$ that would follow if $F_{\Fq}$ is contravariantly isomorphic to itself. The features involved (the tentacles, colons, and cycles of $F_{\Fq}$) can be detected by $S_{\Fq}^{\mathrm{adv}_\infty}$ and $S_{\Fq}^{\mathrm{back}_\infty}$ as follows:
\begin{align}
    \text{The tentacles in $F_{\Fq}$ are given by } & \Sadv{\infty}{\Fq}\setminus \Sback{\infty}{\Fq}.\notag\\
    \text{The colons in $F_{\Fq}$ are given by } & \Sback{\infty}{\Fq}\setminus \Sadv{\infty}{\Fq}.\notag\\
    \text{The cyclic nodes in $F_{\Fq}$ are given by } & \Sadv{\infty}{\Fq} \cap \Sback{\infty}{\Fq} \text{, which will be referred to as $S_{\Fq}^{\mathrm{cyc}}$.}
\end{align}
Observe that the set of tentacle nodes with depth strictly less than $n$ (where the nodes at the tips of the tentacles have depth 0, their children in the tentacles have depth 1, etc.) is
\begin{equation*}\Sadv{\infty}{\Fq}\setminus \Sback{n}{\Fq}.\end{equation*}
Thus, the maximal tentacle length is given by the smallest nonnegative integer $n$ such that
\begin{equation*}\Sadv{\infty}{\Fq} \setminus \Sback{n}{\Fq} = \Sadv{\infty}{\Fq} \setminus \Sback{\infty}{\Fq}, \quad \text{or equivalently } \Sadv{\infty}{\Fq} \cap \Sback{n}{\Fq} = S_{\Fq}^{\mathrm{cyc}}.\end{equation*}
Similarly, the maximal colon length is the smallest $n$ such that $\Sback{\infty}{\Fq} \cap \Sadv{n}{\Fq} = S_{\Fq}^{\mathrm{cyc}}$.

\begin{theorem}\label{population reversal main theorem}
    If $q$ is any odd prime power, then
    \begin{enumerate}
        \item For $n \geq 0$, $\Sadv{n}{\Fq}=\Sadv{\infty}{\Fq}$ if and only if $\Sback{n}{\Fq}=\Sback{\infty}{\Fq}$.
        \item For $n \geq 0$, $\Sadv{n}{\Fq} \cap \Sback{\infty}{\Fq} = S_{\Fq}^\mathrm{cyc}$ if and only if $\Sback{n}{\Fq} \cap \Sadv{\infty}{\Fq} = S_{\Fq}^\mathrm{cyc}$ (so the maximum colon length equals the maximum tentacle length).
        \item We have $\abs{\Sadv{\infty}{\Fq}}=\abs{\Sback{\infty}{\Fq}}$. Moreover, $\abs{\Sadv{n}{\Fq}}=\abs{\Sback{n}{\Fq}}$ for $n \geq 0$.
    \end{enumerate}
\end{theorem}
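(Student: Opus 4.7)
My plan is to prove all three parts at once by exhibiting a single bijection on $S_{\Fq}$ that interchanges advancement and backtracking. Define $R \colon S_{\Fq} \to S_{\Fq}$ by
\[
    R(a, b) := (a + b,\; a - b).
\]
This is a well-defined bijection with inverse $R^{-1}(u, v) = ((u+v)/2,\, (u-v)/2)$: the nontriviality conditions $a, b, a \pm b \neq 0$ are equivalent to $a+b, a-b, 2a, 2b \neq 0$, so $R$ restricts to a self-bijection of $S_{\Fq}$. Its square $R^{2}(a,b) = (2a, 2b)$ is just scaling by $2$, which is a graph automorphism of $F_{\Fq}$ and hence preserves each $\Sadv{n}{\Fq}$ and $\Sback{n}{\Fq}$. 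This scale-invariance will absorb the factor of $2$ that appears in the compatibility below, and is also the only place where odd characteristic is used.

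The key technical step is to match, via $R$, the children of $(a,b)$ with the parents of $R(a,b)$. If the children of $(a,b)$ in $F_{\Fq}$ exist they are $(c, \pm d) := ((a+b)/2,\; \pm\sqrt{ab})$, while the parents of $R(a,b) = (a+b,\; a-b)$ are the roots of $t^{2} - 2(a+b)\,t + (a-b)^{2} = 0$. Since $(a+b)^{2} - (a-b)^{2} = 4ab$, both sides exist under exactly the same condition (that $ab$ is a nonzero square), and the two parents are $(a+b) \pm 2d$, which can be rewritten as $2R(c, d)$ and $2R(c, -d)$. Thus $R$ identifies the children of $(a,b)$ with the parents of $R(a,b)$, up to the harmless rescaling by $2$.

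Using this, I would prove by induction on $n \geq 0$ that $R$ maps $\Sadv{n}{\Fq}$ into $\Sback{n}{\Fq}$ and $\Sback{n}{\Fq}$ into $\Sadv{n}{\Fq}$. The case $n = 0$ is trivial, and $n = 1$ is the observation that $ab$ and $4ab = (a+b)^{2} - (a-b)^{2}$ lie in the same square class. For the inductive step, $(a,b) \in \Sadv{n}{\Fq}$ iff some child of $(a,b)$ lies in $\Sadv{n-1}{\Fq}$, iff (by the inductive hypothesis) the $R$-image of some such child lies in $\Sback{n-1}{\Fq}$, iff (by the compatibility above and scale-invariance) some parent of $R(a,b)$ lies in $\Sback{n-1}{\Fq}$, iff $R(a,b) \in \Sback{n}{\Fq}$. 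The other direction is the symmetric argument with the roles of children and parents exchanged. Intersecting over $n$ gives $R(\Sadv{\infty}{\Fq}) \subseteq \Sback{\infty}{\Fq}$ and $R(\Sback{\infty}{\Fq}) \subseteq \Sadv{\infty}{\Fq}$; since $R$ is a bijection of $S_{\Fq}$, comparing cardinalities forces both inclusions to be equalities. Hence $R$ restricts to a bijection $\Sadv{n}{\Fq} \leftrightarrow \Sback{n}{\Fq}$ for every $n \in \{0, 1, 2, \dots, \infty\}$.

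The three parts of the theorem now follow formally. Part (3) is immediate from the bijection. For part (1), applying $R$ to both sides turns $\Sadv{n}{\Fq} = \Sadv{\infty}{\Fq}$ into $\Sback{n}{\Fq} = \Sback{\infty}{\Fq}$. For part (2), $R$ swaps $\Sadv{\infty}{\Fq}$ with $\Sback{\infty}{\Fq}$ and therefore preserves their intersection $S_{\Fq}^{\mathrm{cyc}}$; applying $R$ to $\Sadv{n}{\Fq} \cap \Sback{\infty}{\Fq} = S_{\Fq}^{\mathrm{cyc}}$ produces $\Sback{n}{\Fq} \cap \Sadv{\infty}{\Fq} = S_{\Fq}^{\mathrm{cyc}}$. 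The only real obstacle in the argument is tracking the factor of $2$ in the children/parents identification; this dissolves completely once one observes that scaling by $2$ is a graph automorphism of $F_{\Fq}$, so the proof is essentially formal once $R$ is written down.
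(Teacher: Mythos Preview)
Your proof is correct, and the map $R(a,b)=(a+b,a-b)$ is a very clean device. The paper's own proof works at the level of the ratio $k=b/a$: it uses the involution $\sigma(k)=\frac{1-k}{1+k}$ on $T_{\Fq}$, checks directly that $\sigma$ is an exact contravariant involution of the $k$-graph $G_{\Fq}$ (Lemma~\ref{self-inverse switch}), deduces the analogues of parts (1)--(3) for $T_{\Fq}$, and then lifts each statement back to $S_{\Fq}$ via the $K^\times$-fibration $S_{\Fq}\to T_{\Fq}$ of Corollary~\ref{improved biconditional}. Your $R$ is precisely a lift of $\sigma$ to $S_{\Fq}$ (the ratio of $R(a,b)$ is $\frac{a-b}{a+b}=\sigma(b/a)$), so the two arguments are the same idea executed on different floors of the same building. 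The trade-off is that the paper's $\sigma$ reverses arrows on the nose in $G_{\Fq}$, whereas your $R$ reverses arrows in $F_{\Fq}$ only up to the scaling $R^2=2\cdot\mathrm{id}$; you correctly absorb that factor using scale-invariance, which is all that is needed for the set-level statements of the theorem. In light of Remark~\ref{no sigma-lifting}, your computation is a nice complement: it shows that while $\sigma$ cannot lift to a genuine contravariant \emph{graph} isomorphism on $F_{\Fq}$, it does lift to one ``up to scaling,'' which suffices here. One minor point: your chain of ``iff''s already gives $R(\Sadv{n}{\Fq})=\Sback{n}{\Fq}$ outright, so the final cardinality comparison is redundant (though harmless).
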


We switch focus to note a contrast between the $q \equiv 3 \bmod 4$ and $q \equiv 5 \bmod 8$ cases resulting from the different tentacle structures, as described in part (5) of Theorems~\ref{3mod4results}~and~\ref{5mod8results}. The following population statement already tells the proportion between sizes of different parts of the jellyfish structure for $\qfivemodeight$. This is analogous to how $\abs{S_{\Fq}^{\mathrm{cyc}}}~=~\frac{1}{2}\abs{\Sadv{\infty}{\Fq}}$ when $\qthreemodfour$, but the longer, branched tentacles lead to a larger proportion of noncyclic nodes.

\begin{proposition}\label{cycle fractions}
    If $\qfivemodeight$, then
    $\abs{S_{\Fq}^{\mathrm{cyc}}}=\frac{1}{4}\abs{\Sadv{\infty}{\Fq}}$.
\end{proposition}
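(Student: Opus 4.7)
The plan is to extract the count directly from the fine structural description of connected components provided by Theorem~\ref{5mod8results}(5). That result asserts that every connected component of $F_{\Fq}^{\mathrm{adv}_\infty}$ consists of a bell-head cycle together with, attached at each cyclic node, a length-two, $Y$-shaped tentacle. My first step is to unpack this picture at the node level: each cyclic node $c \in S_{\Fq}^{\mathrm{cyc}}$ has, in addition to its unique predecessor on the cycle, exactly one non-cyclic parent $p \in \Sadv{\infty}{\Fq} \setminus S_{\Fq}^{\mathrm{cyc}}$ with $p \overset{\AGM}{\mapsto} c$, and this $p$ in turn has exactly two non-cyclic parents in $\Sadv{\infty}{\Fq}$ (the two arms of the $Y$) advancing to it. Thus the tentacle rooted at $c$ contributes exactly three non-cyclic nodes.

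Next, I combine this with the observation that distinct cyclic nodes host disjoint tentacles, which follows from the out-degree-one property in Theorem~\ref{5mod8results}(4): each non-cyclic node of $\Sadv{\infty}{\Fq}$ has a unique successor under AGM-advancement, so iterating advancement determines for each non-cyclic node a unique cyclic node (the first cyclic node it enters). Hence the non-cyclic tentacle nodes partition $\Sadv{\infty}{\Fq} \setminus S_{\Fq}^{\mathrm{cyc}}$ according to the cyclic node at which they land, and each block has cardinality three by the paragraph above. Summing over $S_{\Fq}^{\mathrm{cyc}}$ gives
\begin{equation*}
    \abs{\Sadv{\infty}{\Fq}} \;=\; \abs{S_{\Fq}^{\mathrm{cyc}}} + 3\abs{S_{\Fq}^{\mathrm{cyc}}} \;=\; 4\abs{S_{\Fq}^{\mathrm{cyc}}},
\end{equation*}
which rearranges to the claimed identity $\abs{S_{\Fq}^{\mathrm{cyc}}} = \frac{1}{4}\abs{\Sadv{\infty}{\Fq}}$.

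With Theorem~\ref{5mod8results}(5) in hand, there is essentially no obstacle here; the proof is pure bookkeeping on the shape of a single tentacle. All the genuine difficulty lies upstream, in establishing that the tentacles really have this length-two $Y$ shape (rather than being shorter, longer, unbranched, or further branched); that is the content of Theorem~\ref{5mod8results}(5) itself. In particular, I would not expect to use the explicit population formula of Theorem~\ref{5mod8results}(3) or any elliptic-curve input in this proof, since the ratio $1{:}4$ is a purely combinatorial consequence of the local structure of a single $Y$-tentacle attached to a cyclic node.
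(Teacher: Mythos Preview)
Your argument is correct. The paper, however, does not derive Proposition~\ref{cycle fractions} from the structural Theorem~\ref{5mod8results}(5); it instead invokes Theorem~\ref{quarter overlap}, which shows directly that exactly one quarter of the nodes in $\Sadv{\infty}{\Fq}$ lie in $\Sback{\infty}{\Fq}$ by partitioning $\Sadv{\infty}{\Fq}$ into quadruples of grandparents sharing a common indefinitely-advanceable grandchild, with exactly one grandparent per quadruple being indefinitely backtrackable. Since $S_{\Fq}^{\mathrm{cyc}} = \Sadv{\infty}{\Fq} \cap \Sback{\infty}{\Fq}$, the proposition follows immediately. Your route through Theorem~\ref{5mod8results}(5) is logically sound and not circular (Theorem~\ref{tentacle colon structure}, which underlies part~(5), is proved from the same advanceability/backtrackability lemmas and does not use Proposition~\ref{cycle fractions}); it simply packages the same $4:1$ count as ``one cyclic node plus three tentacle nodes'' rather than ``one twice-backtrackable grandparent among four.'' The paper's approach has the mild advantage that Theorem~\ref{quarter overlap} is established earlier and with less overhead than the full jellyfish structure, while your approach makes the ratio visually transparent once the structure theorem is in hand.
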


We have not as of yet obtained the analogue for Theorems \ref{3mod4results}~and~\ref{5mod8results} for finite fields $\Fq$ with $q \equiv 1 \bmod 8$. It would be very interesting to get the analogous results for $q \equiv 1 \bmod 8$, but computing these for $q \equiv 1 \bmod 8$ up to some range, we have discovered examples where the maximum tentacle length is 3, 4, $\dots$, or 9, with no indication of any upper bound,\footnote{In an upcoming work, we will prove that arbitrarily high tentacle lengths do indeed occur.} so we at present are not able to speculate on the true nature of the AGM in these cases.

In Section~\ref{Determining advancement criteria}, we discuss advanceability criteria of the AGM over $\Fq$ with $q \equiv 1 \bmod 4$, particularly focusing on when $q \equiv 5 \bmod 8$. In Section~\ref{Determining indefinitely advanceable node counts}, we use a simpler version of the AGM graph, which only keeps track of the ratio between the entries of each node, to determine the population of $\Sadv{\infty}{\Fq}$ for $q \equiv 5 \bmod 8$. In Section~\ref{Backtracking}, we focus on backtracking criteria of the AGM over finite fields. In Sections~\ref{Reversal symmetry}~and~\ref{Jellyfish structure}, we discuss the structure of jellyfish, with the former section focusing on contravariant isomorphism. In Section~\ref{Proof of theorems}, we relate the results developed along the way to the results stated in the introduction.

This paper shall use the word jellyfish to refer to a connected component of the directed graph over $S_K^{\mathrm{adv}_\infty}$ corresponding to AGM-advancement for any finite field $K$ where $-1$ is not a fourth power. This is a slight generalization of its definition in \cite{MR4567422} and \cite{mcspirit2023hypergeometryagmfinitefields}, where it only applied to fields where $-1$ is not a square.

\begin{remark*}
    In the spirit of making the premises as general as possible without complicating the proof, the following results are given with premises that are not restricted to finite fields if possible. For example, if the only important part is that $-1$ is a square but not a fourth power, instead of saying, ``Let $q$ be a prime power with $q \equiv 5 \bmod 8$," the premise will instead be, ``Let $K$ be a field where $-1$ is a square but not a fourth power."
\end{remark*}

\section{Determining advancement criteria}\label{Determining advancement criteria}
In this section, we will determine which nodes can be advanced indefinitely in $\mathbb{F}_q$ for $q \equiv 5 \bmod 8$, thus proving several parts of Theorem~\ref{5mod8results}. As a first part step, the following lemma will be useful, as it says we don't need to separately worry about whether each individual pair $(\alpha,\beta)\in K^2$ in an AGM-advancement chain is in $S_K$.

\begin{proposition}\label{nontrivial inheritance}
    Let $K$ be a field with characteristic not equal to two, and let $(\alpha, \beta) \overset{\AGM_K}{\mapsto} (\gamma, \delta)$. Then $(\alpha, \beta)$ is nontrivial if and only if $(\gamma, \delta)$ is nontrivial.
\end{proposition}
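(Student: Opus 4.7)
The plan is to verify each of the four nonvanishing conditions in the definition of a nontrivial node using only the two defining relations $2\gamma = \alpha + \beta$ and $\delta^2 = \alpha\beta$, together with the hypothesis $\mathrm{char}(K) \neq 2$. The single algebraic identity driving both directions is
\[
(\alpha - \beta)^2 \;=\; (\alpha+\beta)^2 - 4\alpha\beta \;=\; 4\gamma^2 - 4\delta^2 \;=\; 4(\gamma-\delta)(\gamma+\delta),
\]
which translates the ``difference'' datum on the parent side into the ``sum and difference'' datum on the child side (and vice versa).

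For the forward direction I would assume $\alpha,\beta,\alpha\pm\beta$ are all nonzero and deduce nontriviality of $(\gamma,\delta)$. First, $\gamma = (\alpha+\beta)/2 \neq 0$ since $\mathrm{char}(K) \neq 2$, and $\delta \neq 0$ because $\delta^2 = \alpha\beta \neq 0$. The nonvanishing of $\gamma\pm\delta$ then follows at once from the identity above: $4(\gamma-\delta)(\gamma+\delta) = (\alpha-\beta)^2 \neq 0$ forces both factors to be nonzero. For the reverse direction I would assume $\gamma,\delta,\gamma\pm\delta$ are all nonzero. Immediately $\alpha+\beta = 2\gamma \neq 0$, while $\alpha\beta = \delta^2 \neq 0$ forces each of $\alpha,\beta$ to be nonzero; and $(\alpha-\beta)^2 = 4(\gamma-\delta)(\gamma+\delta) \neq 0$ gives $\alpha-\beta \neq 0$.

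There is no real obstacle; the argument is a short algebraic bookkeeping. The only point worth flagging is that the step ``$\alpha \neq \beta$'' on the descent uses \emph{both} $\gamma \neq \delta$ and $\gamma \neq -\delta$ simultaneously, reflecting the fact that a node can fail to have a nontrivial parent in two distinct ways corresponding to the two square roots of $\alpha\beta$. Packaged in this form, the proposition records the useful structural fact that nontriviality propagates in both directions along an AGM step, so that when constructing or verifying AGM chains one only needs to check the condition at a single endpoint.
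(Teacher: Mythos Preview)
Your proof is correct and uses essentially the same approach as the paper. The paper packages the three relations $2\gamma=\alpha+\beta$, $\delta^2=\alpha\beta$, and $(\alpha-\beta)^2=4(\gamma+\delta)(\gamma-\delta)$ into the single product identity $\alpha\beta(\alpha+\beta)(\alpha-\beta)^2=8\gamma\delta^2(\gamma+\delta)(\gamma-\delta)$ and reads off the biconditional at once, whereas you verify the four nonvanishing conditions one at a time; this is only a cosmetic difference.
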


\begin{proof}[Proof of Proposition~\ref{nontrivial inheritance}]
    It follows from the assumption that
    $$\alpha\beta(\alpha+\beta)(\alpha-\beta)^2=8\gamma\delta^2(\gamma+\delta)(\gamma-\delta),$$
    and hence, $\alpha,\beta,\alpha\pm \beta \neq 0$ if and only if $\gamma,\delta,\gamma\pm \delta \neq 0$.
\end{proof}

The preceding proposition and the following two lemmas will help establish parts of Theorem~\ref{5mod8results}.

\begin{lemma}\label{fourth power lemma}
    Let $K$ be a field with characteristic not equal to two. Any $(a_0, b_0) \in S_K$ can be advanced at least twice if and only if $\left(\frac{a_0+b_0}{2}\right)^2a_0b_0$ is a fourth power. In this case, if $-1$ is a square in $K$, then both of $(a_0, b_0)$'s children can be advanced at least once.
\end{lemma}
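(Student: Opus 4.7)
The plan is to translate the condition ``2-times advanceable'' directly into a squareness condition on the product $a_1 b_1$ of the child entries, and then match it against the stated fourth-power condition by means of a one-line algebraic identity.

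The key identity is the following. If $(a_0,b_0)\overset{\AGM}{\mapsto}(a_1,b_1)$, then by definition $a_1=\tfrac{a_0+b_0}{2}$ and $b_1^2=a_0 b_0$, so
$$(a_1 b_1)^2 \;=\; a_1^2 b_1^2 \;=\; \left(\tfrac{a_0+b_0}{2}\right)^{\!2} a_0 b_0.$$
The right-hand side does not depend on the sign of $b_1$: the two possible square roots of $a_0 b_0$ only flip the sign of $a_1 b_1$, which is erased after squaring. This is the entire computational content of the lemma.

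For the forward direction, if $(a_0,b_0)\overset{\AGM}{\mapsto}(a_1,b_1)\overset{\AGM}{\mapsto}(a_2,b_2)$, then $a_1 b_1 = b_2^2$ is a square, so by the identity $\bigl(\tfrac{a_0+b_0}{2}\bigr)^2 a_0 b_0 = b_2^4$ is a fourth power. For the converse, suppose $\bigl(\tfrac{a_0+b_0}{2}\bigr)^2 a_0 b_0 = t^4$ for some $t\in K$. The hypothesis $(a_0,b_0)\in S_K$ gives $a_0+b_0\neq 0$, hence $a_1\neq 0$, and rearranging yields $a_0 b_0 = (t^2/a_1)^2$, which is a square. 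Taking $b_1 := t^2/a_1$ gives $a_1 b_1 = t^2$, a square, so the node $(a_1,b_1)$ — which lies in $S_K$ automatically by Proposition~\ref{nontrivial inheritance} — is 1-advanceable, witnessing that $(a_0,b_0)$ is 2-advanceable.

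For the final clause, assume $-1 = i^2$ in $K$. The other candidate child is $(a_1,-b_1)$, with product $a_1\cdot(-b_1) = -t^2 = (it)^2$, which is still a square, so this child is also 1-advanceable (and is in $S_K$ by Proposition~\ref{nontrivial inheritance}). I do not anticipate any serious obstacle: the only conceptual point is that the sign ambiguity in choosing a square root of $a_0 b_0$ is exactly what is governed by the squareness of $-1$, which is precisely why the ``both children'' conclusion needs the extra hypothesis.
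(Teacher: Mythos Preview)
Your proof is correct and follows essentially the same approach as the paper's: both hinge on the identity $(a_1b_1)^2=\bigl(\tfrac{a_0+b_0}{2}\bigr)^2 a_0b_0$, use it directly for the forward implication, and for the converse observe that one of the two candidate children has $a_1b_1$ equal to a square (you construct $b_1=t^2/a_1$ explicitly, while the paper phrases it as ``at least one square root of a fourth power is a square''), with the $-1$-is-a-square hypothesis then forcing the other child's product $-a_1b_1$ to be a square as well.
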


\begin{proof}[Proof of Lemma~\ref{fourth power lemma}]
    Assume $(a_0,b_0)\mapsto (a_1,b_1)\mapsto (a_2,b_2)$. Then
    $$\left(\frac{a_0+b_0}{2}\right)^2a_0b_0={a_1}^2{b_1}^2={b_2}^4,$$
    which is clearly a fourth power in $K$.

    Conversely, assume $4a_0b_0(a_0+b_0)^2$ is a fourth power in $K$. Then $a_0b_0$ is a square, so $(a_0,b_0)$ has two children, $(a_1,b_1)$ and $(a_1,-b_1)$, where $2a_1=a_0+b_0$ and ${b_1}^2=a_0b_0$. Note that $4a_0b_0(a_0+b_0)^2=16{a_1}^2{b_1}^2$, so the two square roots of $4a_0b_0(a_0+b_0)^2$ are $\pm 4a_1b_1$. Since $4a_0b_0(a_0+b_0)^2$ is a fourth power, at least one of its square roots is a square. However, since $-1$ is a square in $K$, $4a_1b_1$ is a square if and only if $-4a_1b_1$ is a square. It follows that both $\pm 4a_1b_1$ are squares, which imply that both $(a_1,\pm b_1)$ are advanceable.
\end{proof}

\begin{lemma}\label{advanceability extension}
    Let $q$ be a prime power with $q \equiv 5 \bmod 8$. If $(a_0, b_0) \in \Sadv{2}{\Fq}$, then exactly one of its children is in $\Sadv{2}{\Fq}$.
\end{lemma}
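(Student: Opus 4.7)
The plan is to reduce the statement to a single computation comparing the "fourth-power witnesses" of the two children, using Lemma~\ref{fourth power lemma} as the main translation tool. Let the two children be $(a_1, b_1)$ and $(a_1, -b_1)$ where $2a_1 = a_0 + b_0$ and $b_1^2 = a_0 b_0$. By Proposition~\ref{nontrivial inheritance} both children lie in $S_K$, and because $-1$ is a square in $\F_q$ when $q \equiv 5 \bmod 8$, the second half of Lemma~\ref{fourth power lemma} guarantees both children are at least once-advanceable. So the only thing to decide is which of the two children actually lies in $\Sadv{2}{\F_q}$.

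Next, I apply Lemma~\ref{fourth power lemma} to each child. Writing
\[
A := \left(\tfrac{a_1+b_1}{2}\right)^2 a_1 b_1, \qquad
B := \left(\tfrac{a_1-b_1}{2}\right)^2 a_1 (-b_1) = -\left(\tfrac{a_1-b_1}{2}\right)^2 a_1 b_1,
\]
the children $(a_1, b_1)$ and $(a_1, -b_1)$ belong to $\Sadv{2}{\F_q}$ precisely when $A$ and $B$ (respectively) are fourth powers in $\F_q^{\times}$. Since $(a_0,b_0) \in \Sadv{2}{\F_q}$, Lemma~\ref{fourth power lemma} applied to $(a_0,b_0)$ says $(a_1 b_1)^2 = \left(\tfrac{a_0+b_0}{2}\right)^2 a_0 b_0$ is a fourth power, whence $a_1 b_1$ is a square. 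Combined with $-1$ being a square in $\F_q$, both $A$ and $B$ are squares in $\F_q^{\times}$, so the question becomes: is $A/B$ a fourth power?

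The key computation is
\[
\frac{A}{B} \;=\; -\left(\frac{a_1+b_1}{a_1-b_1}\right)^2.
\]
From $2a_1 = a_0 + b_0$ and $b_1^2 = a_0 b_0$ we get the crucial identity
\[
a_1^2 - b_1^2 = \left(\tfrac{a_0+b_0}{2}\right)^2 - a_0 b_0 = \left(\tfrac{a_0-b_0}{2}\right)^2,
\]
so $(a_1+b_1)(a_1-b_1)$ is a nonzero square; dividing by the square $(a_1-b_1)^2$ shows $\tfrac{a_1+b_1}{a_1-b_1}$ is a square, hence $\left(\tfrac{a_1+b_1}{a_1-b_1}\right)^2$ is a fourth power. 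Therefore $A/B$ is $-1$ times a fourth power. Since $q \equiv 5 \bmod 8$ means $-1$ is a square but not a fourth power in $\F_q$, $A/B$ is not a fourth power. Because $A$ and $B$ are both squares and fourth powers form an index-$2$ subgroup of the squares, this forces exactly one of $A, B$ to be a fourth power, i.e.\ exactly one of the two children lies in $\Sadv{2}{\F_q}$.

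The only step I anticipate needing care is the bookkeeping of the three-level "square versus fourth power" distinction: ensuring both candidate witnesses are already squares (so that the fourth-power question is meaningful mod an index-$2$ subgroup), and invoking $q \equiv 5 \bmod 8$ at exactly the right point to conclude $-1$ is a non-fourth-power square. Everything else is algebraic identities that fall straight out of the AGM recursion.
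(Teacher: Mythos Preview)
Your proof is correct and follows essentially the same approach as the paper: you introduce the same quantities $A$ and $B$, establish that both are squares, and then show their comparison is $-1$ times a fourth power to conclude exactly one is a fourth power. The only cosmetic difference is that you compute the ratio $A/B = -\left(\tfrac{a_1+b_1}{a_1-b_1}\right)^2$ and argue $\tfrac{a_1+b_1}{a_1-b_1}$ is a square via $a_1^2-b_1^2=\left(\tfrac{a_0-b_0}{2}\right)^2$, whereas the paper computes the product $AB$ directly as $-1$ times an explicit fourth power; both routes encode the same identity.
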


\begin{proof}[Proof of Lemma~\ref{advanceability extension}]
    Let $(a_0, b_0)$'s children be $\left(a_1, b_1\right)$ and $\left(a_1, -b_1\right)$, so $a_1 = \frac{a_0+b_0}{2}$ and ${b_1}^2 = a_0b_0$. Consider
    $$A = \left(\frac{a_1+b_1}{2}\right)^2\!a_1b_1 \qquad \text{ and } \qquad B = \left(\frac{a_1-b_1}{2}\right)^2\!a_1(-b_1).$$
    Since $\left(a_1, b_1\right)$ and $\left(a_1, -b_1\right)$ are both in $\Sadv{1}{\Fq}$ by Lemma~\ref{fourth power lemma}, $A$ and $B$ are both squares. However, their product
    $$AB = -\left(\frac{a_0-b_0}{4}\right)^4 {b_0}^4$$
    is not a fourth power (since $(a_0, b_0)$ is nontrivial by Proposition~\ref{nontrivial inheritance} and $-1$ is not a fourth power), so exactly one of $A$ and $B$ is a fourth power, corresponding to exactly one of $\left(a_1, b_1\right)$ and $\left(a_1, -b_1\right)$ being in $\Sadv{2}{\Fq}$ by Lemma~\ref{fourth power lemma}.
\end{proof}

\begin{remark*}
    While the product of two squares not being a fourth power does not necessitate that one of the squares is a fourth power in infinite fields, it does necessitate that they are not both fourth powers, so infinite fields where $-1$ is a square but not a fourth power obey the property that no nontrivial node has two twice-advanceable children, so AGM-advancement is unique on $\Sadv{\infty}{K}$.
\end{remark*}

Now that we know that (for $\qfivemodeight$,) each member of $\Sadv{2}{\Fq}$ has a child in $\Sadv{2}{\Fq}$, this allows us to conclude that each member of $\Sadv{2}{\Fq}$ is indefinitely advanceable.

\begin{corollary}\label{SFqadm formula}
    Let $q\equiv 5\bmod{8}$. Then the set of all indefinitely advanceable nontrivial pairs is given by $$S_{\F_q}^{\mathrm{adv}_\infty} = S_{\F_q}^{\mathrm{adv}_2} = \left\{(a,b) \in S_{\mathbb{F}_q} \mid \left(\frac{a+b}{2}\right)^2ab \text{ is a fourth power}\right\}\!.$$
    Moreover, $F_{\F_q}^{\mathrm{adv}_\infty}$ is a (single-valued) function.
\end{corollary}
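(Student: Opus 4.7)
The plan is to assemble the three assertions of the corollary as a straightforward synthesis of Lemma~\ref{fourth power lemma} and Lemma~\ref{advanceability extension}; no further computation beyond those two lemmas is needed.

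First I would record that the explicit description
\[
\Sadv{2}{\Fq} = \left\{(a,b) \in S_{\Fq} : \left(\tfrac{a+b}{2}\right)^2 ab \text{ is a fourth power}\right\}
\]
is nothing more than Lemma~\ref{fourth power lemma} specialized to $K = \Fq$. Nontriviality of $(a,b)$ guarantees that the quantity $((a+b)/2)^2 ab$ lies in $\Fq^\times$, so being a fourth power means lying in $(\Fq^\times)^4$.

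The substantive step is proving $\Sadv{\infty}{\Fq} = \Sadv{2}{\Fq}$. The inclusion $\Sadv{\infty}{\Fq} \subseteq \Sadv{2}{\Fq}$ is immediate from the chain $\Sadv{\infty}{\Fq} \subseteq \Sadv{2}{\Fq}$ already displayed in the introduction. For the reverse inclusion I would use Lemma~\ref{advanceability extension} iteratively: given $(a_0,b_0) \in \Sadv{2}{\Fq}$, the lemma produces a unique child $(a_1,b_1) \in \Sadv{2}{\Fq}$; applying the same lemma to $(a_1,b_1) \in \Sadv{2}{\Fq}$ produces $(a_2,b_2) \in \Sadv{2}{\Fq}$, and so on. By induction on $n$, this constructs an AGM chain of arbitrary length starting at $(a_0,b_0)$, placing $(a_0,b_0)$ in $\Sadv{n}{\Fq}$ for every $n$, and hence in $\Sadv{\infty}{\Fq}$.

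Finally, the single-valuedness of $F_{\Fq}^{\mathrm{adv}_\infty}$ is essentially already proved along the way. Any $(a_0,b_0) \in \Sadv{\infty}{\Fq} = \Sadv{2}{\Fq}$ has, by Lemma~\ref{advanceability extension}, exactly one child in $\Sadv{2}{\Fq} = \Sadv{\infty}{\Fq}$; thus every vertex of the induced subgraph $F_{\Fq}^{\mathrm{adv}_\infty}$ has out-degree one, which is what it means for advancement on $\Sadv{\infty}{\Fq}$ to be a (single-valued) function. There is no real obstacle: all of the arithmetic content — in particular the use of $q \equiv 5 \bmod 8$ via the fact that the product of two squares that is not a fourth power cannot have both factors be fourth powers — has already been absorbed into the two preceding lemmas, so the corollary is assembled purely formally.
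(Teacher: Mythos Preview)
Your proposal is correct and follows essentially the same approach as the paper: both identify $\Sadv{2}{\Fq}$ via Lemma~\ref{fourth power lemma}, obtain $\Sadv{2}{\Fq}\subseteq\Sadv{\infty}{\Fq}$ by iterating Lemma~\ref{advanceability extension}, and deduce single-valuedness of $F_{\Fq}^{\mathrm{adv}_\infty}$ from the uniqueness clause of that same lemma.
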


\begin{proof}[Proof of Corollary~\ref{SFqadm formula}]
    Lemma~\ref{fourth power lemma} tells us that $S_{\F_q}^{\mathrm{adv}_2} = \left\{(a,b) \in S_{\mathbb{F}_q} \mid \left(\frac{a+b}{2}\right)^2ab \text{ is a fourth power}\right\}$. Repeatedly applying Lemma~\ref{advanceability extension} to members of $S_{\F_q}^{\mathrm{adv}_2}$ and their twice-advanceable descendants tells us that $S_{\F_q}^{\mathrm{adv}_2} \subseteq S_{\F_q}^{\mathrm{adv}_\infty}$, and $S_{\F_q}^{\mathrm{adv}_\infty} \subseteq S_{\F_q}^{\mathrm{adv}_2}$ by the definition of $S_{\F_q}^{\mathrm{adv}_\infty}$, so $S_{\F_q}^{\mathrm{adv}_\infty} = S_{\F_q}^{\mathrm{adv}_2}$ follows.

    For the proof that $F_{\F_q}^{\mathrm{adv}_\infty}$ is a single-valued function, Lemma~\ref{advanceability extension} tells us that each node in $S_{\F_q}^{\mathrm{adv}_\infty}$ has only one child in $S_{\F_q}^{\mathrm{adv}_2}$ and therefore cannot have multiple children in $S_{\F_q}^{\mathrm{adv}_\infty}$.
\end{proof}

\begin{remark*}
    This also applies to $q \equiv 3 \bmod 4$ (because all squares are fourth powers in finite fields where $-1$ is not a square), allowing this result for $q \equiv 5 \bmod 8$ to be viewed as a generalization of the $q \equiv 3 \bmod 4$ case.
\end{remark*}

\section{Determining indefinitely advanceable node counts}\label{Determining indefinitely advanceable node counts}
For some purposes, we will be interested in the ratios $\frac{b}{a}$; this ratio shall be referred to as $k$ in order to be consistent with \cite{Borwein_book} and a future paper about the Galois-theoretic properties of the polynomials that references results from that source. This $k$ is related to $\lambda$ in \cite{Borwein_book}, \cite{MR4567422}, \cite{kayath2024agmaquariumsellipticcurves}, and \cite{mcspirit2023hypergeometryagmfinitefields}; specifically, $k^2 = \lambda$.

This ratio is important because the effect that AGM advancement of $(a, b)$ has on $k$ does not depend on the precise values of $a$ and $b$, only on their ratio $k = \frac{b}{a}$. Specifically, if $(a_1, b_1) \overset{\text{AGM}}{\mapsto} (a_2, b_2)$, we have that $k_2 = \frac{b_2}{a_2}$ is related to $k_1 = \frac{b_1}{a_1}$ by $(1+k_1)^2 {k_2}^2 = 4 k_1$ \cite[\S 4]{Borwein_book}; when $k_1$ and $k_2$ satisfy this equation, we say that $k_1$ advances to $k_2$ under the rules of $k$-advancement, notated $k_1 \overset{\mathrm{k}}{\mapsto} k_2$, or that $k_2$ backtracks to $k_1$ under the rules of $k$-advancement.

Let $T_K = K \setminus \{0, 1, -1\}$; these $k$-values correspond to nontrivial nodes and therefore shall be referred to as nontrivial $k$-values. Define $G_K$ to be the subset of ${T_K}^2$ given by $G_K = \{(k_1, k_2) \in {T_K}^2 \mid (1+k_1)^2 {k_2}^2 = 4 k_1\}$. Let $T_K^{\mathrm{adv}_0} = T_K^{\mathrm{back}_0} = T_K$, $T_K^{\mathrm{adv}_n} = \{k_1: (k_1, k_2) \in G_K \cap (T_K^{\mathrm{adv}_{n-1}} \times T_k)\}$ and $T_K^{\mathrm{back}_n} = \{k_2: (k_1, k_2) \in G_K \cap (T_k \times T_K^{\mathrm{back}_{n-1}})\}$ for $n \in \mathbb{N}$, $T_K^{\mathrm{adv}_\infty} = \bigcup_{n \in \mathbb{N}} T_K^{\mathrm{adv}_n}$, and $T_K^{\mathrm{back}_\infty} = \bigcup_{n \in \mathbb{N}} T_K^{\mathrm{back}_n}$. All of these definitions are analogous to for $S_K$ and $F_K$, and, in fact, we will see in Lemma~\ref{arrow biconditional} and Corollary~\ref{improved biconditional} that $(a, b) \mapsto \frac{b}{a}$ commutes with advancement, so $T_K^{\mathrm{adv}_n}~=~\left\{\frac{b}{a}: (a, b) \in S_K^{\mathrm{adv}_n}\right\}$ and $T_K^{\mathrm{back}_n} = \left\{\frac{b}{a}: (a, b) \in S_K^{\mathrm{back}_n}\right\}$.

\begin{lemma}\label{arrow biconditional}
    Let $(a_0,b_0), (a_1,b_1) \in S_K$. Then $(a_0,b_0)\overset{\AGM}{\mapsto} (a_1,b_1)$ if and only if $2a_1=a_0+b_0$ and $\frac{b_0}{a_0} \overset{\mathrm{k}}{\mapsto} \frac{b_1}{a_1}$.
\end{lemma}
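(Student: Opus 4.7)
The plan is to unfold both sides of the biconditional using the explicit definitions of AGM-advancement and $k$-advancement, then observe that after imposing the common condition $2a_1 = a_0+b_0$, the remaining relations $b_1^2 = a_0 b_0$ and $(1+k_0)^2 k_1^2 = 4k_0$ are algebraically equivalent. The nontriviality of $(a_0,b_0)$ and $(a_1,b_1)$ in $S_K$ will ensure that every quantity I need to invert is nonzero.

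For the forward direction, I would start from $2a_1 = a_0+b_0$ and $b_1^2 = a_0 b_0$, then compute
\[
(1+k_0)^2 k_1^2 \;=\; \frac{(a_0+b_0)^2}{a_0^2}\cdot\frac{b_1^2}{a_1^2}
\;=\; \frac{4a_1^2}{a_0^2}\cdot\frac{b_1^2}{a_1^2}
\;=\; \frac{4b_1^2}{a_0^2}
\;=\; \frac{4a_0 b_0}{a_0^2}
\;=\; 4k_0,
\]
where the substitution $(a_0+b_0)^2 = 4a_1^2$ uses exactly the arithmetic-mean relation. Note $a_0 \neq 0$ and $a_1 \neq 0$ are guaranteed by $(a_0,b_0),(a_1,b_1) \in S_K$.

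For the reverse direction, I assume $2a_1 = a_0+b_0$ together with $(1+k_0)^2 k_1^2 = 4k_0$. Clearing denominators in the $k$-relation using $a_0, a_1 \neq 0$ gives $(a_0+b_0)^2 b_1^2 = 4 a_0 b_0 a_1^2$. Substituting $(a_0+b_0)^2 = 4a_1^2$ and dividing by $4a_1^2$ (legal since $a_1 \neq 0$) yields $b_1^2 = a_0 b_0$, which combined with $2a_1 = a_0 + b_0$ is precisely the definition of $(a_0,b_0) \overset{\AGM}{\mapsto}(a_1,b_1)$.

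There is no real obstacle: the lemma is essentially a change-of-variables check. The only care required is bookkeeping the invertibility hypotheses ($a_0, a_1 \neq 0$), which come for free from membership in $S_K$, so no separate argument is needed. The result then plugs into the subsequent development where advancement of ordered pairs can be analyzed purely in terms of the ratio $k$.
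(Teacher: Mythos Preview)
Your proof is correct and follows essentially the same approach as the paper: both arguments observe that the condition $2a_1=a_0+b_0$ is common to each side and then verify that, under this assumption, $b_1^2=a_0b_0$ and $(1+k_0)^2k_1^2=4k_0$ are equivalent. The paper packages the two directions into a single scalar identity relating $b_1^2-a_0b_0$ to $(1+k_0)^2k_1^2-4k_0$, whereas you unwind each implication separately; the underlying algebra is identical.
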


\begin{proof}[Proof of Lemma~\ref{arrow biconditional}]
    If $2a_1 = a_0 + b_0$, then we have ${b_1}^2 - a_0b_0 = 4{a_0}^2 \times \left(\left(1 + \frac{b_0}{a_0}\right)^2 \left(\frac{b_1}{a_1}\right)^2 - 4\frac{b_0}{a_0}\right)$, so ${b_1}^2 = a_0b_0$ if and only if $\left(1 + \frac{b_0}{a_0}\right)^2 \left(\frac{b_1}{a_1}\right)^2 = 4\frac{b_0}{a_0}$, i.e., if and only if $\frac{b_0}{a_0} \overset{\mathrm{k}}\mapsto \frac{b_1}{a_1}$.
\end{proof}

Lemma~\ref{arrow biconditional} allows us to relate AGM advancement to $k$-advancement, which in turn allows us to relate the populations of $S_K^{\mathrm{adv}_n}$ and $T_K^{\mathrm{adv}_n}$ and to relate the populations of $S_K^{\mathrm{back}_n}$ and $T_K^{\mathrm{back}_n}$. This will be useful for proving part (3) of Thoerems~\ref{5mod8results}~and~\ref{5mod8backtrackingresults}.

\begin{corollary}\label{improved biconditional}
    For any $n \in \mathbb{N} \cup \{\infty\}$ and any field $K$ with characteristic not equal to two, we have $S_K^{\mathrm{adv}_n} = \{(a, ka): a \in K^\times, k \in T_K^{\mathrm{adv}_n}\}$ and $S_K^{\mathrm{back}_n} = \{(a, ka): a \in K^\times, k \in T_K^{\mathrm{back}_n}\}$.

    In particular, $$\left| S_{\F_q}^{\mathrm{adv}_\infty} \right| = \left|K^\times\right| \times \left| T_{\F_q}^{\mathrm{adv}_\infty} \right|\!.$$
\end{corollary}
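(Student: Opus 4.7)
The plan is induction on $n$, with Lemma~\ref{arrow biconditional} as the engine. The base case $n = 0$ is tautological: $(a, b) \in S_K = S_K^{\mathrm{adv}_0} = S_K^{\mathrm{back}_0}$ says exactly that $a \in K^\times$ and $k := b/a$ avoids $\{0, 1, -1\}$, i.e., $k \in T_K = T_K^{\mathrm{adv}_0} = T_K^{\mathrm{back}_0}$. So the parametrization $(a, b) \leftrightarrow (a, b/a)$ realizes a bijection $S_K \cong K^\times \times T_K$ from the outset.

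For the inductive step on the advancement side, suppose the identity holds for $n$. If $(a_0, b_0) \in S_K^{\mathrm{adv}_{n+1}}$, fix any child $(a_1, b_1) \in S_K^{\mathrm{adv}_n}$; by Lemma~\ref{arrow biconditional} we get $b_0/a_0 \overset{\mathrm{k}}{\mapsto} b_1/a_1$, and the inductive hypothesis places $b_1/a_1 \in T_K^{\mathrm{adv}_n}$, so $b_0/a_0 \in T_K^{\mathrm{adv}_{n+1}}$. Conversely, given $(a_0, b_0) \in S_K$ with $k_0 := b_0/a_0 \in T_K^{\mathrm{adv}_{n+1}}$, pick a $k_1 \in T_K^{\mathrm{adv}_n}$ with $k_0 \overset{\mathrm{k}}{\mapsto} k_1$, and set $a_1 := (a_0 + b_0)/2$, $b_1 := k_1 a_1$. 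Multiplying the defining relation $(1 + k_0)^2 k_1^2 = 4 k_0$ by $a_0^2$ yields $b_1^2 = a_0 b_0$, so Lemma~\ref{arrow biconditional} gives $(a_0, b_0) \mapsto (a_1, b_1)$, and the nontriviality of $(a_1, b_1)$ comes for free from Proposition~\ref{nontrivial inheritance}. Induction then places $(a_1, b_1) \in S_K^{\mathrm{adv}_n}$, so $(a_0, b_0) \in S_K^{\mathrm{adv}_{n+1}}$. The backtracking case is the mirror image, swapping the roles of $k_0$ and $k_1$ in the construction.

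For $n = \infty$, combine the finite-$n$ identities across all $n$: since $S_K^{\mathrm{adv}_\infty} = \bigcap_n S_K^{\mathrm{adv}_n}$ and analogously on the $T$-side, and since the bijection $(a, b) \leftrightarrow (a, b/a)$ already carries each stratum $S_K^{\mathrm{adv}_n}$ onto $K^\times \times T_K^{\mathrm{adv}_n}$, it carries the intersection onto the corresponding intersection. The population formula is then immediate: $(a, k) \mapsto (a, ka)$ is a bijection $\F_q^\times \times T_{\F_q}^{\mathrm{adv}_\infty} \to S_{\F_q}^{\mathrm{adv}_\infty}$, so the cardinalities multiply. The whole argument is essentially formal once Lemma~\ref{arrow biconditional} is available; the only mildly nontrivial point is the reverse direction of the inductive step, where one must reconstitute a genuine nontrivial child from a prescribed $k$-step $k_0 \overset{\mathrm{k}}{\mapsto} k_1$, which is exactly what Proposition~\ref{nontrivial inheritance} settles.
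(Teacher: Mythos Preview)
Your proof is correct and takes essentially the same approach as the paper's: both directions use Lemma~\ref{arrow biconditional} to pass between AGM chains and $k$-chains, with the paper building the full chain directly via the recursion $a_{i+1} = \tfrac{1+k_i}{2}\,a_i$ (and $a_{i-1} = \tfrac{2}{1+k_{i-1}}\,a_i$ for backtracking) rather than framing it as formal induction on $n$. The only cosmetic difference is that you appeal to Proposition~\ref{nontrivial inheritance} for nontriviality of the reconstructed child, while the paper gets this implicitly from $k_i \in T_K$ and $a_i \ne 0$.
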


\begin{proof}[Proof of Corollary~\ref{improved biconditional}]
    If we have an AGM-advancement chain $(a_0, b_0) \overset{\AGM}{\mapsto} (a_1, b_1) \overset{\AGM}{\mapsto} \cdots \overset{\AGM}{\mapsto} (a_n, b_n)$, Lemma~\ref{arrow biconditional} lets us take the $k$-value of each node to get a $k$-advancement chain $k_0 \overset{\mathrm{k}}{\mapsto} k_1 \overset{\mathrm{k}}{\mapsto} \cdots \overset{\mathrm{k}}{\mapsto} k_n$, so $S_K^{\mathrm{adv}_n} \subseteq \{(a, ka): a \in K^\times, k \in T_K^{\mathrm{adv}_n}\}$, and $S_K^{\mathrm{back}_n} \subseteq \{(a, ka): a \in K^\times, k \in T_K^{\mathrm{back}_n}\}$.

    If we instead start with a $k$-advancement chain $k_0 \overset{\mathrm{k}}{\mapsto} k_1 \overset{\mathrm{k}}{\mapsto} \cdots \overset{\mathrm{k}}{\mapsto} k_n$ and some starting $a_0 \in K^\times$, we can use Lemma~\ref{arrow biconditional} to obtain an AGM-advancement chain $(a_0, k_0 a_0) \overset{\AGM}{\mapsto} (a_1, k_1 a_1) \overset{\AGM}{\mapsto} \cdots \overset{\AGM}{\mapsto} (a_n, k_n a_n)$ by recursively setting $a_{i+1} = \frac{a_i + k_i a_i}{2}$ for $i = 0, 1, 2, \mathellipsis, n-1$, so $S_K^{\mathrm{adv}_n} \supseteq \{(a, ka): a \in K^\times, k \in T_K^{\mathrm{adv}_n}\}$. Analogously, if we start with a $k$-advancement chain $k_0 \overset{\mathrm{k}}{\mapsto} k_1 \overset{\mathrm{k}}{\mapsto} \cdots \overset{\mathrm{k}}{\mapsto} k_n$ and some starting $a_n \in K^\times$, we can use Lemma~\ref{arrow biconditional} to obtain an AGM-advancement chain $(a_0, k_0 a_0) \overset{\AGM}{\mapsto} (a_1, k_1 a_1) \overset{\AGM}{\mapsto} \cdots \overset{\AGM}{\mapsto} (a_n, k_n a_n)$ by recursively setting $a_{i-1} = \frac{2 a_i}{1 + k_i}$ for $i = n, n-1, n-2, \mathellipsis, 1$, so $S_K^{\mathrm{back}_n} \supseteq \{(a, ka): a \in K^\times, k \in T_K^{\mathrm{back}_n}\}$.
\end{proof}

Corollary~\ref{improved biconditional} allows us to translate what we already knew about node advanceability, such as Lemma~\ref{SFqadm formula}, into information about $k$-advanceability.

\begin{corollary}\label{T count}
    If $q \equiv 5 \bmod{8}$, then
    $$T_{\F_q}^{\mathrm{adv}_\infty} = \{k \in T_{\F_q} \mid 4k\left(1+k\right)^2 \text{ is a fourth power}\}$$
\end{corollary}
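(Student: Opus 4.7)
The plan is to deduce this directly by combining Corollary~\ref{improved biconditional} with Corollary~\ref{SFqadm formula} and performing a short rescaling argument. There is no real obstacle: the content is the ratio-invariance of the fourth-power condition, once one checks that only a factor of $a^4$ enters.

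First I would use Corollary~\ref{improved biconditional} (with $n = \infty$) to write
\[
T_{\F_q}^{\mathrm{adv}_\infty} \;=\; \left\{\tfrac{b}{a} : (a,b)\in S_{\F_q}^{\mathrm{adv}_\infty}\right\}\!.
\]
Then I would invoke Corollary~\ref{SFqadm formula}, which identifies $S_{\F_q}^{\mathrm{adv}_\infty}$ with the set of $(a,b)\in S_{\F_q}$ for which $\left(\tfrac{a+b}{2}\right)^{\!2}\!ab$ is a fourth power in $\F_q^\times$.

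Next I would substitute $b = ka$ to rewrite this fourth-power condition in terms of $k$ alone. A direct calculation gives
\[
\left(\tfrac{a+b}{2}\right)^{\!2}\!ab \;=\; \frac{a^4\, k(1+k)^2}{4}.
\]
Since $a^4$ lies in $\bigl(\F_q^\times\bigr)^{\!4}$ and $16 = 2^4$ also lies in $\bigl(\F_q^\times\bigr)^{\!4}$, the quantity on the right is a fourth power if and only if $4k(1+k)^2$ is a fourth power. In particular, the condition depends only on $k$, not on the chosen representative $a$.

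Finally, I would observe that for every $k\in T_{\F_q}$ the pair $(1,k)$ lies in $S_{\F_q}$ (by the definitions $k\neq 0,\pm 1$), so $k\in T_{\F_q}^{\mathrm{adv}_\infty}$ exactly when $(1,k)\in S_{\F_q}^{\mathrm{adv}_\infty}$, which by the previous step is equivalent to $4k(1+k)^2$ being a fourth power in $\F_q^\times$. This matches the stated description and completes the argument. The step requiring the most care is verifying that both $a^4$ and $16$ can be absorbed into the fourth-power group; but since we are working inside the multiplicative group $\F_q^\times$ and the fourth powers form a subgroup, this is immediate.
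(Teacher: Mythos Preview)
Your proof is correct and follows essentially the same approach as the paper: combine Corollary~\ref{improved biconditional} with Corollary~\ref{SFqadm formula} and read off the condition for a single representative pair. The only cosmetic difference is that the paper chooses the representative $(a,b)=(2,2k)$, for which $\left(\tfrac{a+b}{2}\right)^{2}ab = 4k(1+k)^{2}$ on the nose, thereby avoiding your extra (but harmless) step of absorbing the factors $a^{4}$ and $16$ into $\bigl(\F_q^\times\bigr)^{4}$.
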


\begin{proof}[Proof of Corollary~\ref{T count}]
    Corollary~\ref{improved biconditional} tells us that $k \in T_{\F_q}^{\mathrm{adv}_\infty}$ if and only if $\left(2, 2k\right) \in S_{\F_q}^{\mathrm{adv}_\infty}$. The conclusion follows from substituting $(a, b) = (2, 2k)$ into Lemma~\ref{SFqadm formula}.
\end{proof}

Now that we have a formula for $T_{\F_q}^{\mathrm{adv}_\infty}$, we can use it to figure out its population. Note that $T_{\F_q}^{\mathrm{adv}_\infty}$ consists of the $k$-coordinates superelliptic curve $y^4 = 4k\left(1+k\right)^2$, which has genus one, so its population is related to the population of some elliptic curve. Specifically, it turns out to be related to $y^2=2x(1+x^2)$.

\begin{theorem}\label{Legendre elliptic curve counts}
    Let $q\equiv 5\bmod{8}$. Then the number of all indefinitely advanceable $k$-values is given by $$\left| T_{\F_q}^{\mathrm{adv}_\infty} \right| = \frac{q-a_q-7}{4},$$
    where $a_q$ denotes the trace of Frobenius of the elliptic curve $y^2=x^3-x$ over $\mathbb{F}_q$.
\end{theorem}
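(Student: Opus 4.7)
The plan is to convert the fourth-power condition from Corollary~\ref{T count} into a point-count on a quadratic twist of $E: y^2=x^3-x$. By that corollary, $\Tadv{\infty}{\Fq}$ consists of $k \in T_{\Fq}$ such that $4k(1+k)^2 = k \cdot (2(1+k))^2$ is a fourth power in $\Fq$. Such a $k$ must itself be a nonzero square; writing $k = s^2$, the condition becomes that $(2s(1+s^2))^2$ is a fourth power, equivalently, that $2s(1+s^2)$ is $\pm$ a square. Because $q \equiv 5 \bmod 8$ makes $-1$ a square in $\Fq$, the sign is irrelevant, and we simply require $2s(1+s^2)$ to be a nonzero square. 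Introducing $i \in \Fq$ with $i^2=-1$, the exclusion $k \notin \set{0,\pm 1}$ corresponds to $s \notin \set{0,\pm 1,\pm i}$, and the map $s \mapsto s^2$ is $2$-to-$1$ from the admissible $s$-set onto $\Tadv{\infty}{\Fq}$.

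We next count pairs $(s,y) \in \Fq^2$ on the affine curve $C: y^2 = 2x(1+x^2)$ with $x \notin \set{0,\pm 1,\pm i}$. For each valid $k$ there are $2$ preimages $\pm s$, and for each such $s$ there are $2$ values of $y$, giving $4$ points on $C$ per element of $\Tadv{\infty}{\Fq}$. The projective closure of $C$ is an elliptic curve $E_0$ with a single point at infinity, so the number of affine points is $\abs{E_0(\Fq)} - 1 = q - a_{E_0}$. Evaluating $2x(1+x^2)$ at the five excluded $x$-values $0,1,-1,i,-i$ gives $0,4,-4,0,0$, contributing $1+2+2+1+1 = 7$ affine points (where $x = -1$ uses that $-4 = (2i)^2$ is a square in $\Fq$). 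Hence
$$\abs{\Tadv{\infty}{\Fq}} = \frac{q - a_{E_0} - 7}{4}.$$

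The main remaining step, and the principal obstacle, is to identify $a_{E_0}$ with $a_q$. The substitution $x = iu$ in $y^2 = 2x(x-i)(x+i)$ yields $y^2 = -2i(u^3 - u)$, exhibiting $E_0$ as the quadratic twist of $E$ by $-2i$, so that $E_0 \cong_{\Fq} E$ precisely when $-2i$ is a square in $\Fq$. Under $q \equiv 5 \bmod 8$, the supplementary law for the prime $2$ shows $2$ is a nonsquare in $\Fq$; and since $-1$ is a square but not a fourth power, $i$ is a nonsquare, whence so is $-i = (-1)\cdot i$. The product $-2i = 2 \cdot (-i)$ of two nonsquares is therefore a square, which gives $a_{E_0} = a_q$ and the desired formula. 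Beyond this twist identification, the only care needed is the correct enumeration of the five excluded $x$-values---in particular, that $\pm i \in \Fq$ (valid since $q \equiv 1 \bmod 4$) must appear on the exclusion list.
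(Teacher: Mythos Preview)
Your proof is correct and follows the same route as the paper: write $k=s^2$, count $\Fq$-points on $y^2=2s(1+s^2)$ away from the five excluded $s$-values, and then identify this curve with $y^2=x^3-x$. The only difference is in that final identification---the paper normalizes to $y^2=x^3+4x$ over $\Q$ and invokes that $-4=(1+i)^4$ is a fourth power whenever $q\equiv 1\bmod 4$, whereas you substitute $x=iu$ to exhibit a twist by $-2i$ and argue it is a square; your appeal to the supplementary law is phrased for primes, but the conclusion for prime powers $q\equiv 5\bmod 8$ is immediate once one notes $-2i=(1-i)^2$.
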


\begin{proof}[Proof of Theorem~\ref{Legendre elliptic curve counts}]
    Recall from Corollary~\ref{T count} that $\left| T_{\F_q}^{\mathrm{adv}_\infty} \right| = \left\{k \in T_{\F_q} \mid 4k\left(1+k\right)^2 \text{ is a fourth power}\!\right\}$. Since $1+k$ cannot be 0, in order for $4k\left(1+k\right)^2$ to be a square, $k$ must be a square. Because $k = 0$ is already excluded, each applicable value of $k$ has two square roots, so we have:
    \begin{align*}
        \left| T_{\F_q}^{\mathrm{adv}_\infty} \right| &= \#\!\left\{k \in T_{\F_q} \mid 4k(1+k)^2 \text{ is a fourth power}\right\}
        \\&= \#\!\left\{k \in \mathbb{F}_q \setminus \{-1, 0, 1\} \mid 4k(1+k)^2 \text{ is a fourth power}\right\}
        \\&= \frac{\#\!\left\{x \in \mathbb{F}_q \mid x^2 \notin \{-1, 0, 1\}, 2x\!\left(1+x^2\right) \text{ is a square}\right\}}{2} & & k = x^2
        \\&= \frac{\#\!\left\{(x, y) \in {\mathbb{F}_q}^2 \mid x^2 \notin \{-1, 0, 1\}, y^2=2x\!\left(1+x^2\right)\right\}}{4} & & x^2 \notin \{0, -1\} \Rightarrow 2x\!\left(1+x^2\right) \neq 0
        \\&= \frac{\#E_{y^2=2x\left(x^2+1\right)}\!\left(\mathbb{F}_q\right)-8}{4}
        \\&= \frac{q-a_q-7}{4},
    \end{align*}
    where the $-8$ is to exclude $(0, 0)$, $(1, 0)$, $(-1, 0)$, $(i, 1-i)$, $(i, -1+i)$, $(-i, 1+i)$, $(-i, -1-i)$, and the projective point at infinity, and $a_q = q + 1 - \#E_{y^2=2x(1+x^2)}\!\left(\mathbb{F}_q\right)$, where $\#E_{y^2=2x(1+x^2)}\!\left(\mathbb{F}_q\right)$ includes the projective point at infinity.
    
    The elliptic curve $y^2=2x(1+x^2)$ is isomorphic over $\mathbb{Q}$ to the elliptic curve $y^2=x^3+4x$. For $q \equiv 1 \bmod 4$, it is also isomorphic over $\mathbb{F}_q$ to $y^2=x^3-x$ because $-4$ is a fourth power in $\mathbb{F}_q$.
\end{proof}

Because the populations of $S_{\F_q}^{\mathrm{adv}_\infty}$ and $T_{\F_q}^{\mathrm{adv}_\infty}$ are related by Corollary~\ref{improved biconditional}, knowledge of the latter allows us to deduce the former.

\begin{corollary}\label{SFqadvinfty population}
    Let $q\equiv 5\bmod{8}$. Then
    $$\left| S_{\F_q}^{\mathrm{adv}_\infty} \right| = \frac{q-1}{4} \times \left(q-a_q-7\right),$$
    where $a_q$ again denotes the trace of Frobenius of the elliptic curve $y^2=x^3-x$ over $\Fq$.
\end{corollary}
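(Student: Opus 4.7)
The plan is to deduce this count directly from the two preceding results: Corollary~\ref{improved biconditional} reduces the count of $S_{\Fq}^{\mathrm{adv}_\infty}$ to a product involving $\left|T_{\Fq}^{\mathrm{adv}_\infty}\right|$, and Theorem~\ref{Legendre elliptic curve counts} then supplies the needed value of $\left|T_{\Fq}^{\mathrm{adv}_\infty}\right|$.

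First, I would invoke Corollary~\ref{improved biconditional} with $n = \infty$ and $K = \Fq$ to obtain the identification
\[
    S_{\Fq}^{\mathrm{adv}_\infty} \;=\; \bigl\{(a, ka) : a \in \Fq^\times,\ k \in T_{\Fq}^{\mathrm{adv}_\infty}\bigr\}.
\]
Next, I would observe that the parametrization map $\Fq^\times \times T_{\Fq}^{\mathrm{adv}_\infty} \to S_{\Fq}^{\mathrm{adv}_\infty}$ sending $(a,k) \mapsto (a, ka)$ is a bijection, with inverse $(a,b) \mapsto (a, b/a)$; this is well-defined because $a \in \Fq^\times$ and the ratio $b/a$ lands in $T_{\Fq}^{\mathrm{adv}_\infty}$ by the corollary just cited. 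Therefore
\[
    \left|S_{\Fq}^{\mathrm{adv}_\infty}\right| \;=\; \left|\Fq^\times\right| \cdot \left|T_{\Fq}^{\mathrm{adv}_\infty}\right| \;=\; (q-1)\cdot \left|T_{\Fq}^{\mathrm{adv}_\infty}\right|.
\]

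Finally, I would substitute $\left|T_{\Fq}^{\mathrm{adv}_\infty}\right| = \tfrac{q - a_q - 7}{4}$ from Theorem~\ref{Legendre elliptic curve counts}, yielding $\left|S_{\Fq}^{\mathrm{adv}_\infty}\right| = \tfrac{q-1}{4}\,(q - a_q - 7)$, as claimed. Since both ingredients have already been proved, there is no serious obstacle here; the only point to take care of is making the bijection with $\Fq^\times \times T_{\Fq}^{\mathrm{adv}_\infty}$ explicit so that the product formula for cardinalities is unambiguous. In essence, this corollary is the ``bookkeeping step'' that translates the $k$-coordinate count of Theorem~\ref{Legendre elliptic curve counts} back into the original $(a,b)$-coordinates, completing part (3) of Theorem~\ref{5mod8results}.
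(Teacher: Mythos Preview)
Your proposal is correct and matches the paper's own proof, which simply says the result follows from substituting Theorem~\ref{Legendre elliptic curve counts} into Corollary~\ref{improved biconditional}. You have merely spelled out the bijection $\Fq^\times \times T_{\Fq}^{\mathrm{adv}_\infty} \to S_{\Fq}^{\mathrm{adv}_\infty}$ that is implicit in the ``In particular'' clause of Corollary~\ref{improved biconditional}.
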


\begin{proof}[Proof of Corollary~\ref{SFqadvinfty population}]
    This follows from substituting Theorem~\ref{Legendre elliptic curve counts} into Corollary~\ref{improved biconditional}.
\end{proof}

This result guarantees that all prime powers $q \equiv 5 \bmod 8$ past a certain point have nonempty jellyfish swarms.

\begin{corollary}\label{nonempty past 13}
    For $q\equiv 5\bmod{8}$, $S_{\F_q}^{\mathrm{adv}_\infty}$ and $T_{\F_q}^{\mathrm{adv}_\infty}$ are empty if and only if $q \in \{5, 13\}$.
\end{corollary}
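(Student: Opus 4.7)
The plan is to reduce the vanishing of both sets to a single arithmetic condition on $a_q$, apply the Hasse bound to restrict to finitely many candidates, and then check the survivors directly. By Corollary~\ref{SFqadvinfty population} combined with Corollary~\ref{improved biconditional}, the cardinalities $\abs{S_{\F_q}^{\mathrm{adv}_\infty}}$ and $\abs{T_{\F_q}^{\mathrm{adv}_\infty}}$ are both equal to nonnegative-integer multiples of $q - a_q - 7$, with common positive factor $(q-1)/4$. Consequently, they vanish simultaneously, and the problem reduces to determining which $q \equiv 5 \bmod 8$ satisfy $a_q = q - 7$.

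First I would apply the Hasse bound $\abs{a_q} \leq 2\sqrt{q}$. If $a_q = q - 7$, then $q - 7 \leq 2\sqrt{q}$, and substituting $u = \sqrt{q}$ turns this into the quadratic inequality $u^2 - 2u - 7 \leq 0$, whose positive solutions are $u \leq 1 + 2\sqrt{2} < 3.83$. Hence $q \leq 14$, and among prime powers the only candidates with $q \equiv 5 \bmod 8$ are $q = 5$ and $q = 13$. For every larger $q$, the Hasse bound forces $q - a_q - 7 \geq 1$, so $S_{\F_q}^{\mathrm{adv}_\infty}$ and $T_{\F_q}^{\mathrm{adv}_\infty}$ are automatically nonempty.

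Next I would verify directly that both surviving cases do satisfy $a_q = q - 7$, using the explicit trace formula~(\ref{trace of frobenius formula}). For $p = 5 = 4 \cdot 1^2 + 1^2$ I take $m = 1$, $n = 0$, yielding $\pi = -1 + 2i$ and $a_5 = \pi + \overline{\pi} = -2 = 5 - 7$. For $p = 13 = 4 \cdot 1^2 + 3^2$ I take $m = 1$, $n = 1$, yielding $\pi = 3 + 2i$ and $a_{13} = 6 = 13 - 7$. Each of these can also be sanity-checked by naive point counting on $y^2 = x^3 - x$ over the relevant field. Since all of the substantive work has already been done in Theorem~\ref{Legendre elliptic curve counts} and Corollary~\ref{SFqadvinfty population}, I do not expect any genuine obstacle here; the corollary is essentially a one-paragraph application of the Hasse bound followed by two small trace computations.
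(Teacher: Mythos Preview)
Your proposal is correct and follows essentially the same approach as the paper: reduce to the condition $q - a_q - 7 = 0$, use the Hasse bound to cut the search to $q \leq 9 + 4\sqrt{2}$, and then verify $q = 5, 13$ directly via the trace formula~\eqref{trace of frobenius formula}. Your write-up is in fact more explicit than the paper's, which merely says ``it can be directly checked''; one small phrasing issue is that $\abs{T_{\F_q}^{\mathrm{adv}_\infty}} = (q - a_q - 7)/4$ is not literally a positive-integer multiple of $q - a_q - 7$, but since both cardinalities vanish exactly when $q - a_q - 7 = 0$ this does not affect the argument.
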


\begin{proof}[Proof of Corollary~\ref{nonempty past 13}]
    It can be directly checked from \eqref{trace of frobenius formula} that $q-a_q-7=0$ for $q=5,13$. For $q > 9+4\sqrt{2}$ (about 14.7), Hasse's theorem guarantees that $q-a_q-7\geq q-2\sqrt{q}-7>0$.
\end{proof}

\section{Backtracking}\label{Backtracking}
Now that we have investigated the structure of $\Sadv{\infty}{\Fq}$, it makes sense to investigate the structure of $\Sback{\infty}{\Fq}$. In addition to proving Theorem~\ref{5mod8backtrackingresults} as an analogue of Theorem~\ref{5mod8results}, this combined with the previous section will help motivate the contravariant isomorphism between $T_{\Fq}^{\mathrm{adv}_\infty}$ and $T_{\Fq}^{\mathrm{back}_\infty}$, yielding reversal symmetry, which is a powerful tool that will help us, among other things, prove Theorem~\ref{population reversal main theorem}. First, we develop analogs of previous advancement results for backtracking. Specifically, Lemma~\ref{fourth power lemma} corresponds to Corollary~\ref{parental backtrackability} and Lemma~\ref{backtracking twice}, Lemma~\ref{advanceability extension} corresponds to Lemma~\ref{backtrackability extension}, and Corollary \ref{SFqadm formula} corresponds to Remark~\ref{unique backtracking} and Corollary~\ref{backtracking twice to infinite}.

\begin{lemma}[Generalization of Corollary 2.2 in \cite{kayath2024agmaquariumsellipticcurves}]\label{backtracking once}
    Let $K$ be a field with characteristic not equal to two. A node $\left(a_n, b_n\right) \in S_K$ can be backtracked at least once if and only if ${a_n}^2-{b_n}^2$ is a square in $K$. In this case, it has exactly two parents, which are each others' reversals.
\end{lemma}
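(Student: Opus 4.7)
The plan is to solve the defining equations for a parent of $(a_n, b_n)$ directly via the quadratic formula. By the definition of $F_K$, an ordered pair $(a_{n-1}, b_{n-1}) \in K^2$ satisfies $(a_{n-1}, b_{n-1}) \overset{\AGM}{\mapsto} (a_n, b_n)$ if and only if $a_{n-1} + b_{n-1} = 2 a_n$ and $a_{n-1} b_{n-1} = b_n^2$; equivalently, $a_{n-1}$ and $b_{n-1}$ are the two roots (in some order) of the monic quadratic
\[ P(t) := t^2 - 2 a_n t + b_n^2 \in K[t]. \]

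First, I would compute the discriminant of $P$, which is $4(a_n^2 - b_n^2)$. Since $\mathrm{char}(K) \neq 2$, the polynomial $P$ has roots in $K$ if and only if this discriminant is a square in $K$, which is equivalent to $a_n^2 - b_n^2$ being a square in $K$. This establishes the biconditional for existence of at least one ordered pair of parents in $K^2$.

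Next, writing $a_n^2 - b_n^2 = s^2$ for some $s \in K$, I would observe that the nontriviality of $(a_n, b_n)$ gives $a_n^2 - b_n^2 = (a_n - b_n)(a_n + b_n) \neq 0$, so $s \neq 0$ and the two roots $a_n + s$ and $a_n - s$ of $P$ are distinct. Thus the ordered pairs of solutions in $K^2$ are exactly
\[ (a_n + s,\, a_n - s) \quad \text{and} \quad (a_n - s,\, a_n + s), \]
which are each other's reversals, as claimed.

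Finally, I would invoke Proposition~\ref{nontrivial inheritance} to upgrade these $K^2$-solutions to $S_K$-solutions: nontriviality propagates in both directions along an advancement edge, so since $(a_n, b_n) \in S_K$, both candidate parents automatically lie in $S_K$. The whole argument is essentially the quadratic formula applied to the system defining advancement, so no serious obstacle arises; the only subtleties are verifying distinctness of the two roots (which rests on $s \neq 0$, ensured by nontriviality of $(a_n, b_n)$) and confirming both ordered pairs are nontrivial (handled by the inheritance proposition).
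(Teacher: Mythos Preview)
Your proposal is correct and follows essentially the same approach as the paper: both set up the quadratic $t^2 - 2a_n t + b_n^2$ whose roots are the coordinates of any parent, and read off the result from the quadratic formula. Your version is more thorough in explicitly checking that the two roots are distinct and in invoking Proposition~\ref{nontrivial inheritance} to confirm the parents lie in $S_K$, points the paper leaves implicit.
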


\begin{proof}[Proof of Lemma~\ref{backtracking once}]
    The coordinates of any parent of $\left(a_n, b_n\right)$ must have sum $2a_n$ and product ${b_n}^2$, so they must be the roots of $x^2-2a_n x+{b_n}^2=0$ in either order. The result (including that the two roots are distinct for nontrivial nodes) follows from using the quadratic equation.
\end{proof}

\begin{remark*}
    When $K = \mathbb{R}$ and $a_n$ and $b_n$ are both positive, this becomes the AM–GM inequality.
\end{remark*}

\begin{remark*}
    If $\left(a_n, b_n\right)$ is backtrackable at least once and $\left(a_{n-1},b_{n-1}\right) $ is either of the two parents, then $4{a_n}^2-4{b_n}^2=(a_{n-1}-b_{n-1})^2$.
\end{remark*}

\begin{corollary}\label{parental backtrackability}
    Let $K$ be a field with characteristic not equal to two and where $-1$ is a square. If $\left(a_n, b_n\right) \in S_K^{\mathrm{back}_1}$, then its parents are either both in $S_K^{\mathrm{back}_1}$ or neither in $S_K^{\mathrm{back}_1}$.
\end{corollary}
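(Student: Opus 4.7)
The plan is to apply Lemma~\ref{backtracking once} twice: once to identify the two parents of $(a_n,b_n)$ and once to test each of them for one-step backtrackability.

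First, since $(a_n,b_n)\in S_K^{\mathrm{back}_1}$, Lemma~\ref{backtracking once} says ${a_n}^2-{b_n}^2$ is a square and that $(a_n,b_n)$ has exactly two parents, which are reversals of each other. Denote them by $(a_{n-1},b_{n-1})$ and $(b_{n-1},a_{n-1})$, where $a_{n-1}+b_{n-1}=2a_n$ and $a_{n-1}b_{n-1}={b_n}^2$. Next, applying Lemma~\ref{backtracking once} to each of these parents, I obtain the criterion that $(a_{n-1},b_{n-1})\in S_K^{\mathrm{back}_1}$ iff ${a_{n-1}}^2-{b_{n-1}}^2$ is a square in $K$, while $(b_{n-1},a_{n-1})\in S_K^{\mathrm{back}_1}$ iff ${b_{n-1}}^2-{a_{n-1}}^2$ is a square in $K$.

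The conclusion then drops out from the hypothesis that $-1$ is a square in $K$: the elements ${a_{n-1}}^2-{b_{n-1}}^2$ and ${b_{n-1}}^2-{a_{n-1}}^2=-({a_{n-1}}^2-{b_{n-1}}^2)$ differ by multiplication by the square $-1$, hence one is a square in $K$ if and only if the other is. Equivalently, squareness of these quantities is an invariant of the unordered pair $\{a_{n-1},b_{n-1}\}$, so the two parents are simultaneously backtrackable or simultaneously non-backtrackable.

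There is no real obstacle here; the statement is essentially a bookkeeping consequence of Lemma~\ref{backtracking once} combined with the hypothesis on $-1$. The only thing to be mindful of is to cite Proposition~\ref{nontrivial inheritance} (or its immediate variant for backtracking, which follows from the same symmetric identity $\alpha\beta(\alpha+\beta)(\alpha-\beta)^2=8\gamma\delta^2(\gamma+\delta)(\gamma-\delta)$) to ensure the parents actually lie in $S_K$ and thus that the backtrackability question for them is well-posed, rather than being about degenerate nodes.
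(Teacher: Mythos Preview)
Your proposal is correct and follows essentially the same approach as the paper: identify the two parents as reversals $(a_{n-1},b_{n-1})$ and $(b_{n-1},a_{n-1})$ via Lemma~\ref{backtracking once}, then observe that their backtrackability criteria ${a_{n-1}}^2-{b_{n-1}}^2$ and ${b_{n-1}}^2-{a_{n-1}}^2$ differ by the square $-1$. Your extra remark about nontriviality of the parents is a harmless precaution, though it is already implicit in Lemma~\ref{backtracking once} (parents are by definition elements of $S_K$).
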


\begin{proof}[Proof of Corollary~\ref{parental backtrackability}]
    Let $\left(a_{n-1}, b_{n-1}\right)$ and $\left(b_{n-1}, a_{n-1}\right)$ be the parents of $\left(a_n, b_n\right)$. By Lemma~\ref{backtracking once}, that these are either both or neither backtrackable is equivalent to saying that ${a_{n-1}}^2-{b_{n-1}}^2$ is a square if and only if ${b_{n-1}}^2-{a_{n-1}}^2$ is a square (where $\left(a_{n-1}, b_{n-1}\right)$ and $\left(b_{n-1}, a_{n-1}\right)$ are the parents of $(a_n, b_n)$), which is true because $-1$ is a square.
\end{proof}

\begin{lemma}\label{backtracking twice}
    Let $K$ be a field with characteristic not equal to two and where $-1$ is a square. Any $(a_n, b_n) \in \Sback{1}{K}$ can be backtracked at least twice if and only if ${a_n}^2\!\left({a_n}^2-{b_n}^2\right)$ is a fourth power.
\end{lemma}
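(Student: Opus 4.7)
The plan is to reduce the two-step backtracking condition to a square condition on the parent, then translate that square condition into a fourth-power condition on $(a_n,b_n)$ itself by squaring. Since $(a_n,b_n)\in\Sback{1}{K}$, Lemma~\ref{backtracking once} gives its two parents $(a_{n-1},b_{n-1})$ and $(b_{n-1},a_{n-1})$ with $a_{n-1}+b_{n-1}=2a_n$ and $a_{n-1}b_{n-1}={b_n}^2$, and the remark right after it records the identity $(a_{n-1}-b_{n-1})^2=4({a_n}^2-{b_n}^2)$. By Corollary~\ref{parental backtrackability}, the two parents stand or fall together with respect to backtrackability (this is where the assumption that $-1$ is a square enters the first time), so $(a_n,b_n)\in\Sback{2}{K}$ if and only if $(a_{n-1},b_{n-1})\in\Sback{1}{K}$.

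Next I would apply Lemma~\ref{backtracking once} to $(a_{n-1},b_{n-1})$: it is backtrackable iff ${a_{n-1}}^2-{b_{n-1}}^2$ is a square in $K$. Factoring this difference of squares and using $a_{n-1}+b_{n-1}=2a_n$, we get
\[
{a_{n-1}}^2-{b_{n-1}}^2 \;=\; 2a_n\,(a_{n-1}-b_{n-1}),
\]
so the question becomes whether $x:=2a_n(a_{n-1}-b_{n-1})$ is a square in $K$. Squaring and using the remark above gives
\[
x^2 \;=\; 4{a_n}^2(a_{n-1}-b_{n-1})^2 \;=\; 16\,{a_n}^2({a_n}^2-{b_n}^2),
\]
which is a fourth power in $K$ if and only if ${a_n}^2({a_n}^2-{b_n}^2)$ is (since $16=2^4$).

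The last step is the equivalence between ``$x$ is a square'' and ``$x^2$ is a fourth power,'' which is where the hypothesis that $-1$ is a square is used again, and which I expect to be the only subtle point. If $x$ is a square, then $x^2$ is obviously a fourth power. Conversely, if $x^2=y^4$ for some $y\in K$, then $x=\pm y^2$; both $y^2$ and $-y^2=(iy)^2$ (where $i^2=-1$) are squares in $K$, so $x$ itself is a square in either case. Combining these equivalences yields exactly the statement that $(a_n,b_n)\in\Sback{2}{K}$ iff ${a_n}^2({a_n}^2-{b_n}^2)$ is a fourth power in $K$. The main obstacle, such as it is, is keeping the role of the hypothesis $-1\in(K^\times)^2$ straight: it is invoked both to collapse the two parents into a single backtrackability condition (via Corollary~\ref{parental backtrackability}) and to pass between a square root of a fourth power being a square versus just a square; without it, one could only conclude that $x^2$ is a fourth power, not that $x$ is a square.
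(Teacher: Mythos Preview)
Your proof is correct and follows essentially the same approach as the paper: both rest on the identity $({a_{n-1}}^2-{b_{n-1}}^2)^2=16\,{a_n}^2({a_n}^2-{b_n}^2)$ and the observation that, because $-1$ is a square, an element is a square iff its square is a fourth power. The only cosmetic difference is that the paper handles the forward direction by writing ${a_n}^2({a_n}^2-{b_n}^2)=\bigl(\tfrac{a_{n-2}-b_{n-2}}{4}\bigr)^4$ via the grandparent, whereas you run both directions uniformly through the parent-level biconditional.
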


\begin{proof}[Proof of Lemma~\ref{backtracking twice}]
    Assume $(a_{n-2},b_{n-2})\mapsto (a_{n-1},b_{n-1})\mapsto (a_n,b_n)$. Then
    $${a_n}^2\!\left({a_n}^2-{b_n}^2\right) = \left(\frac{a_{n-2}-b_{n-2}}{4}\right)^4,$$
    which is clearly a fourth power in $K$.

    Conversely, assume ${a_n}^2\!\left({a_n}^2-{b_n}^2\right)$ is a fourth power in $K$. Then ${a_n}^2-{b_n}^2$ is a square, so $(a_n,b_n)$ has two parents, $(a_{n-1},b_{n-1})$ and $(b_{n-1},a_{n-1})$, where $2a_n=a_{n-1}+b_{n-1}$ and ${b_n}^2=a_{n-1}b_{n-1}$. Note that ${a_n}^2\!\left({a_n}^2-{b_n}^2\right)=\frac{({a_{n-1}}^2-{b_{n-1}}^2)^2}{16}$, so the two square roots of ${a_n}^2-{b_n}^2$ are $\pm \frac{{a_{n-1}}^2-{b_{n-1}}^2}{4}$. Since ${a_n}^2\!\left({a_n}^2-{b_n}^2\right)$ is a fourth power, at least one of its square roots is a square. However, since $-1$ is a square in $K$, $\frac{{a_{n-1}}^2-{b_{n-1}}^2}{4}$ is a square if and only if $\frac{{b_{n-1}}^2-{a_{n-1}}^2}{4}$ is a square. It follows that both $\pm \frac{{a_{n-1}}^2-{b_{n-1}}^2}{4}$ are squares, which imply that both $(a_{n-1}, b_{n-1})$ and $(b_{n-1}, a_{n-1})$ are backtrackable by Lemma~\ref{backtracking once}.
\end{proof}

\begin{lemma}\label{backtrackability extension}
    Let $q$ be a prime power with $q \equiv 5 \bmod 8$. If $(a_n, b_n) \in \Sback{2}{\Fq}$, then exactly one of its children can be backtracked at least twice (so $(a_n, b_n)$ can be backtracked at least three times).
\end{lemma}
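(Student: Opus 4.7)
The plan is to mirror the proof of Lemma~\ref{advanceability extension} with advancement replaced by backtracking, using Lemma~\ref{backtracking once}, Corollary~\ref{parental backtrackability}, and Lemma~\ref{backtracking twice} as the backward analogs of the tools employed there. First I would identify the two parents of $(a_n, b_n)$: by Lemma~\ref{backtracking once} they are $(a_{n-1}, b_{n-1})$ and $(b_{n-1}, a_{n-1})$ with $2a_n = a_{n-1} + b_{n-1}$ and $b_n^2 = a_{n-1}b_{n-1}$. Since $(a_n, b_n) \in \Sback{2}{\Fq}$ has at least one parent in $\Sback{1}{\Fq}$, Corollary~\ref{parental backtrackability} forces both parents to lie in $\Sback{1}{\Fq}$; equivalently, both $a_{n-1}^2 - b_{n-1}^2$ and $b_{n-1}^2 - a_{n-1}^2$ are squares in $\Fq$.

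By Lemma~\ref{backtracking twice}, the parent $(a_{n-1}, b_{n-1})$ (resp.\ $(b_{n-1}, a_{n-1})$) extends to $\Sback{2}{\Fq}$ exactly when $A := a_{n-1}^2(a_{n-1}^2 - b_{n-1}^2)$ (resp.\ $B := b_{n-1}^2(b_{n-1}^2 - a_{n-1}^2)$) is a fourth power. Both $A$ and $B$ are nonzero squares by the previous paragraph, so exactly one will be a fourth power precisely when their ratio is a non-fourth-power square. The crucial computation is
\[
\frac{B}{A} = -\frac{b_{n-1}^2}{a_{n-1}^2} = -\left(\frac{b_{n-1}}{b_n}\right)^{4},
\]
where the second equality uses $b_n^2 = a_{n-1}b_{n-1}$ to rewrite $b_{n-1}/a_{n-1} = b_{n-1}^2/(a_{n-1}b_{n-1}) = (b_{n-1}/b_n)^2$. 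Since $q \equiv 5 \bmod 8$, $-1$ is not a fourth power in $\Fq$, so $B/A$ is a square that is not a fourth power. Because fourth powers form an index-$2$ subgroup of $(\Fq^\times)^2$ when $q \equiv 5 \bmod 8$, $B/A$ being a non-fourth-power square forces exactly one of $A, B$ themselves to be a fourth power. Translating back via Lemma~\ref{backtracking twice}, exactly one of the two parents of $(a_n, b_n)$ lies in $\Sback{2}{\Fq}$, and prepending that parent's length-$2$ backtracking chain by the AGM step into $(a_n, b_n)$ yields a length-$3$ chain ending at $(a_n, b_n)$, establishing $(a_n, b_n) \in \Sback{3}{\Fq}$.

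The main obstacle I expect is spotting the identity $b_{n-1}/a_{n-1} = (b_{n-1}/b_n)^2$. Without it, $B/A$ is merely $-1$ times an arbitrary square, whose fourth-power status is a priori ambiguous; the identity is exactly the input that upgrades the hypothesis ``$-1$ is not a fourth power'' into the concrete asymmetry needed between the two parent branches.
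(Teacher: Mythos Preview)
Your proof is correct and follows essentially the same strategy as the paper: identify the two parents $(a_{n-1},b_{n-1})$ and $(b_{n-1},a_{n-1})$, observe via Corollary~\ref{parental backtrackability} that both lie in $\Sback{1}{\Fq}$, and then use Lemma~\ref{backtracking twice} together with the fact that $-1$ is a square but not a fourth power to show exactly one of $A=a_{n-1}^2(a_{n-1}^2-b_{n-1}^2)$ and $B=b_{n-1}^2(b_{n-1}^2-a_{n-1}^2)$ is a fourth power. The only difference is cosmetic: the paper computes the \emph{product} $AB=-b_n^4\bigl(\tfrac{a_{n-2}-b_{n-2}}{2}\bigr)^4$ (bringing in the grandparent explicitly), whereas you compute the \emph{ratio} $B/A=-(b_{n-1}/b_n)^4$ using only the parent relation $b_n^2=a_{n-1}b_{n-1}$; since $A$ and $B$ are nonzero squares, these two tests are equivalent, and your version has the mild advantage of not needing to name $(a_{n-2},b_{n-2})$.
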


\begin{proof}[Proof of Lemma~\ref{backtrackability extension}]
    Assume $(a_{n-2},b_{n-2})\mapsto (a_{n-1},b_{n-1})\mapsto (a_n,b_n)$, so $(b_{n-1},a_{n-1})\mapsto (a_n,b_n)$. Consider
    $${a_{n-1}}^2\!\left({a_{n-1}}^2-{b_{n-1}}^2\right) \qquad \text{ and } \qquad {b_{n-1}}^2\!\left({b_{n-1}}^2-{a_{n-1}}^2\right)\!,$$
    which are squares by Corollary~\ref{parental backtrackability} because $(a_{n-1},b_{n-1})$ and $(b_{n-1},a_{n-1})$ are both in $S_{\Fq}^{\mathrm{back}_1}$. However, their product
    $${a_{n-1}}^2\!\left({a_{n-1}}^2-{b_{n-1}}^2\right) \times {b_{n-1}}^2\!\left({b_{n-1}}^2-{a_{n-1}}^2\right) = -{b_n}^4\left(\frac{a_{n-2}-b_{n-2}}{2}\right)^4,$$
    is not a fourth power (since $(a_{n-2}, b_{n-2})$ is nontrivial by Proposition~\ref{nontrivial inheritance} and $-1$ is not a fourth power), so exactly one of ${a_{n-1}}^2\!\left({a_{n-1}}^2-{b_{n-1}}^2\right)$ and ${b_{n-1}}^2\!\left({b_{n-1}}^2-{a_{n-1}}^2\right)$ is a fourth power, corresponding to exactly one of $\left(a_{n-1}, b_{n-1}\right)$ and $\left(b_{n-1}, a_{n-1}\right)$ being at least twice backtrackable by Lemma~\ref{backtracking twice}.
\end{proof}

\begin{remark}\label{unique backtracking}
    While the product of two squares not being a fourth power does not necessitate that one of the squares is a fourth power in infinite fields, it does necessitate that they are not both fourth powers, so infinite fields where $-1$ is a square but not a fourth power obey the property that no node in $S_K^{\mathrm{back}_2}$ has multiple parents in $S_K^{\mathrm{back}_2}$.
\end{remark}

\begin{corollary}\label{backtracking twice to infinite}
    Let $q\equiv 5\bmod{8}$. Then $$S_{\Fq}^{\mathrm{back}_\infty} = S_{\Fq}^{\mathrm{back}_2} = \!\left\{(a, b) \in S_{\Fq} \mid a^2(a^2-b^2) \text{ is a fourth power}\right\}.$$
\end{corollary}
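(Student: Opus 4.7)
The plan is to mirror the proof of Corollary~\ref{SFqadm formula} in the backtracking setting, using the backtracking analogues of the three key lemmas already established (Lemmas~\ref{backtracking once}, \ref{backtracking twice}, and \ref{backtrackability extension}). The statement naturally splits into two set equalities: the explicit characterization $\Sback{2}{\Fq}=\{(a,b)\in S_{\Fq}: a^2(a^2-b^2)\in(\Fq^\times)^4\}$, and the stabilization $\Sback{\infty}{\Fq}=\Sback{2}{\Fq}$. I would prove them in that order.

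For the explicit characterization, I would note that if $a^2(a^2-b^2)$ is a fourth power, then in particular it is a square, and since $a^2$ is a nonzero square, dividing shows $a^2-b^2$ is a square. By Lemma~\ref{backtracking once} this places $(a,b)$ in $\Sback{1}{\Fq}$, and then Lemma~\ref{backtracking twice} upgrades it to $\Sback{2}{\Fq}$. Conversely, if $(a,b)\in\Sback{2}{\Fq}$, then $(a,b)\in\Sback{1}{\Fq}$ and Lemma~\ref{backtracking twice} forces $a^2(a^2-b^2)$ to be a fourth power.

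For the stabilization, the inclusion $\Sback{\infty}{\Fq}\subseteq\Sback{2}{\Fq}$ is immediate from the definition of indefinite backtrackability. The reverse inclusion is the substantive direction, and I would prove it by induction on $n\geq 2$ that $\Sback{2}{\Fq}\subseteq\Sback{n}{\Fq}$. The base case is trivial. For the inductive step, given $(a_n,b_n)\in\Sback{2}{\Fq}$, Lemma~\ref{backtrackability extension} guarantees that exactly one of the two parents (call it $(a_{n-1}',b_{n-1}')$) is itself in $\Sback{2}{\Fq}$. By the inductive hypothesis $(a_{n-1}',b_{n-1}')\in\Sback{n-1}{\Fq}$, and prepending the arrow $(a_{n-1}',b_{n-1}')\overset{\AGM}{\mapsto}(a_n,b_n)$ to any length-$(n-1)$ backtracking chain for $(a_{n-1}',b_{n-1}')$ produces a length-$n$ backtracking chain for $(a_n,b_n)$. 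Taking the intersection over all $n$ then yields $\Sback{2}{\Fq}\subseteq\Sback{\infty}{\Fq}$.

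The main obstacle — or rather the deepest input — is Lemma~\ref{backtrackability extension}, which is already proved; it is what ensures that the chain of twice-backtrackable ancestors never terminates, and it crucially exploits that $-1$ is a square but not a fourth power in $\Fq$ with $\qfivemodeight$. Once that lemma is in hand, the corollary is a clean bookkeeping argument, almost verbatim analogous to the advancement side. No new computational work is required, and the statement about $F_{\Fq}^{\mathrm{back}_\infty}$ being a single-valued function on backtracking (i.e., part~(4) of Theorem~\ref{5mod8backtrackingresults}) comes along for free from the ``exactly one'' clause of Lemma~\ref{backtrackability extension}.
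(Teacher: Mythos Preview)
Your proposal is correct and follows essentially the same approach as the paper: the explicit characterization via Lemmas~\ref{backtracking once} and~\ref{backtracking twice}, then the stabilization by iterating Lemma~\ref{backtrackability extension} exactly as in Corollary~\ref{SFqadm formula}. The only cosmetic difference is that you spell out the induction, whereas the paper compresses it to ``repeatedly applying Lemma~\ref{backtrackability extension}.''
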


\begin{proof}[Proof of Corollary~\ref{backtracking twice to infinite}]
    The second equality follows from Lemmas \ref{backtracking once}~and~\ref{backtracking twice}. For the first equality, that $S_{\Fq}^{\mathrm{back}_\infty} \subseteq S_{\Fq}^{\mathrm{back}_2}$ is true by definition, and the proof that $S_{\Fq}^{\mathrm{back}_\infty} \supseteq S_{\Fq}^{\mathrm{back}_2}$ is analogous to that of Corollary~\ref{SFqadm formula}, following from repeatedly applying Lemma~\ref{backtrackability extension} to any node in $S_{\Fq}^{\mathrm{back}_2}$ and its twice-backtrackable ancestors.
\end{proof}

\begin{remark}\label{lambda backtracking criteria}
    Recall from Corollary~\ref{improved biconditional} that
    $$k \in T_K^{\mathrm{back}_n} \Longleftrightarrow (a, k a) \in S_K^{\mathrm{back}_n}.$$
    Thus, in a finite field with order congruent to five modulo eight, a nontrivial (i.e., not equal to 0, 1, $-1$, or $\infty$) $k$-value is indefinitely backtrackable if and only if $1-k^2$ is a fourth power, corresponding to the curve $\mu^4=1-k^2$. If we recall its advanceability counterpart, the twice-advanceability elliptic curve $y^2=2x\!\left(1+x^2\right)$ from the proof of Theorem~\ref{Legendre elliptic curve counts}, the two turn out to be related by the birational map $(k, \mu) \mapsto \!\left(\frac{1-x^2}{1+x^2}, \frac{y}{1+x^2}\right)$ with inverse $(x, y) \mapsto \!\left(\frac{\mu^2}{k+1}, \frac{2\mu}{k+1}\right)$. The remark at the beginning of section \ref{Reversal symmetry} showed that this is not an accident.
\end{remark}

The following theorem is important in proving part (5) of Theorem~\ref{5mod8results}, part (5) of Theorem~\ref{5mod8backtrackingresults}, and part (2) of Corollary~\ref{cycle fractions}.

\begin{theorem}\label{quarter overlap}
    Let $q$ be a prime power with $q \equiv 5 \bmod 8$. Exactly one quarter of the members of $S_{\F_q}^{\mathrm{adv}_\infty}$ are in $S_{\F_q}^{\mathrm{back}_\infty}$, and exactly one quarter of the members of $S_{\F_q}^{\mathrm{back}_\infty}$ are in $S_{\F_q}^{\mathrm{adv}_\infty}$.
\end{theorem}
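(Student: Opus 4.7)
The plan is to prove both halves of the theorem through two successive applications of the handshake lemma: first within $F_{\Fq}^{\mathrm{adv}_\infty}$ and then within the subgraph obtained by further restricting to once-backtrackable nodes. A symmetric pair of applications within $F_{\Fq}^{\mathrm{back}_\infty}$ will yield the second half.

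First, I examine $F_{\Fq}^{\mathrm{adv}_\infty}$. Every node $(a,b) \in \Sadv{\infty}{\Fq}$ has out-degree $1$ by Corollary~\ref{SFqadm formula}. For the in-degree, the two candidate parents $(P,Q), (Q,P) \in S_{\Fq}$ exist precisely when $(a,b) \in \Sback{1}{\Fq}$ (Lemma~\ref{backtracking once}). When they exist, each satisfies $\left(\tfrac{P+Q}{2}\right)^2 PQ = a^2 b^2 = (ab)^2$, which is a fourth power iff $ab \in (\Fq^\times)^2$; the latter always holds on $\Sadv{\infty}{\Fq}$, since the advancement criterion $\left(\tfrac{a+b}{2}\right)^2 ab \in (\Fq^\times)^4$ forces $ab$ to be a square. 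Hence the in-degree is $2$ when $(a,b) \in \Sback{1}{\Fq}$ and $0$ otherwise, and the handshake lemma yields
\[|\Sadv{\infty}{\Fq}| \;=\; 2\,|\Sadv{\infty}{\Fq} \cap \Sback{1}{\Fq}|.\]

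Second, I restrict to the induced subgraph of $F_{\Fq}^{\mathrm{adv}_\infty}$ on $\Sadv{\infty}{\Fq} \cap \Sback{1}{\Fq}$. Out-degrees remain $1$: the unique $\Sadv{\infty}{\Fq}$-child of $(a,b)$ has $(a,b)$ as a parent in $S_{\Fq}$, hence lies in $\Sback{1}{\Fq}$. For in-degrees, both parents $(P,Q)$ and $(Q,P)$ still sit in $\Sadv{\infty}{\Fq}$, and the new question is whether they sit in $\Sback{1}{\Fq}$, i.e.\ whether $P^2 - Q^2 \in (\Fq^\times)^2$. Using $(P-Q)^2 = 4(a^2 - b^2)$ and $P+Q = 2a$, I compute $(P^2 - Q^2)^2 = 16 a^2 (a^2 - b^2)$, so $P^2 - Q^2$ is a square exactly when $a^2 (a^2 - b^2) \in (\Fq^\times)^4$; by Lemma~\ref{backtracking twice} and Corollary~\ref{backtracking twice to infinite} this is precisely the condition $(a,b) \in \Sback{\infty}{\Fq}$. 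Therefore the in-degree in this subgraph is $2$ on $S_{\Fq}^{\mathrm{cyc}} = \Sadv{\infty}{\Fq} \cap \Sback{\infty}{\Fq}$ and $0$ elsewhere, giving
\[|\Sadv{\infty}{\Fq} \cap \Sback{1}{\Fq}| \;=\; 2\,|S_{\Fq}^{\mathrm{cyc}}|,\]
and combining with the first count shows $|S_{\Fq}^{\mathrm{cyc}}| = \tfrac{1}{4}|\Sadv{\infty}{\Fq}|$.

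The reverse statement is symmetric. In $F_{\Fq}^{\mathrm{back}_\infty}$, Lemma~\ref{backtrackability extension} gives in-degree $1$, while out-degrees are $0$ or $2$: the children $(a', \pm b')$ of $(a,b) \in \Sback{\infty}{\Fq}$ exist iff $ab \in (\Fq^\times)^2$, i.e.\ iff $(a,b) \in \Sadv{1}{\Fq}$, and in that case both lie in $\Sback{\infty}{\Fq}$ because $(a')^2\!\left((a')^2 - (b')^2\right) = \left(\tfrac{a^2 - b^2}{4}\right)^2$ is a fourth power (as $a^2 - b^2$ is a square). The first handshake gives $|\Sback{\infty}{\Fq}| = 2|\Sback{\infty}{\Fq} \cap \Sadv{1}{\Fq}|$; restricting to $\Sback{\infty}{\Fq} \cap \Sadv{1}{\Fq}$ and noting that a child additionally lies in $\Sadv{1}{\Fq}$ precisely when $(a'b')^2 = \left(\tfrac{a+b}{2}\right)^2 ab \in (\Fq^\times)^4$, i.e.\ when $(a,b) \in \Sadv{\infty}{\Fq}$, produces the second handshake $|\Sback{\infty}{\Fq} \cap \Sadv{1}{\Fq}| = 2|S_{\Fq}^{\mathrm{cyc}}|$, hence $|S_{\Fq}^{\mathrm{cyc}}| = \tfrac{1}{4}|\Sback{\infty}{\Fq}|$. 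The main obstacle in both directions is recognizing that the precise criterion controlling the restriction at the \emph{second} step is exactly the fourth-power criterion cutting out $\Sback{\infty}{\Fq}$ (resp.\ $\Sadv{\infty}{\Fq}$); once this algebraic coincidence is spotted, the two handshakes propagate the factors of $2$ and deliver the quarter.
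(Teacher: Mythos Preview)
Your argument is correct and rests on exactly the same algebraic identities as the paper's proof: that parents of $(a,b)\in\Sadv{\infty}{\Fq}$ automatically satisfy the twice-advanceable criterion because $\left(\tfrac{P+Q}{2}\right)^2 PQ=(ab)^2$, and that those parents are themselves once-backtrackable precisely when $a^2(a^2-b^2)$ is a fourth power, i.e.\ when $(a,b)\in\Sback{\infty}{\Fq}$.

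The packaging differs. The paper partitions $\Sadv{\infty}{\Fq}$ directly into quadruples of grandparents sharing a common $\Sadv{\infty}{\Fq}$-grandchild and observes that exactly one node in each quadruple lies in $\Sback{\infty}{\Fq}$, obtaining the factor $\tfrac14$ in one stroke. You instead factor the $4$-to-$1$ map through the intermediate set $\Sadv{\infty}{\Fq}\cap\Sback{1}{\Fq}$ and apply the handshake lemma twice, picking up two factors of $2$. Your approach makes the role of the intermediate level $\Sback{1}{\Fq}$ more explicit and is arguably easier to audit edge by edge; the paper's grandparent partition is quicker to state but hides that same two-step structure inside the phrase ``four grandparents.'' Either way, the substance is identical.
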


\begin{proof}[Proof of Theorem~\ref{quarter overlap}]
    Let $(a, b)$ be a member of $S_{\F_q}^{\mathrm{adv}_\infty}$. Then Lemma~\ref{fourth power lemma} and Corollary~\ref{SFqadm formula} tell us that $(a, b)$ has exactly one grandchild in $S_{\F_q}^{\mathrm{adv}_\infty}$. Proposition~\ref{nontrivial inheritance}, Lemma~\ref{backtracking once}, and Corollary~\ref{parental backtrackability} tell us that this grandchild has exactly four grandparents, which are all in $S_{\F_q}^{\mathrm{adv}_\infty}$, and Lemma~\ref{backtracking once}, Theorem~\ref{backtrackability extension} and Corollary~\ref{backtracking twice to infinite} tell us that exactly one of these grandparents is in $S_{\F_q}^{\mathrm{back}_\infty}$. Thus, $S_{\F_q}^{\mathrm{adv}_\infty}$ can be partitioned into disjoint subsets each consisting of four nodes (sharing a common indefinitely advanceable grandchild) such that exactly one member of each four-node subset is in $S_{\F_q}^{\mathrm{back}_\infty}$, so exactly one quarter of $S_{\F_q}^{\mathrm{adv}_\infty}$ is in $S_{\F_q}^{\mathrm{back}_\infty}$.

    Analogously, let $(a, b)$ be a member of $S_{\F_q}^{\mathrm{back}_\infty}$. Then Proposition~\ref{nontrivial inheritance}, Lemma~\ref{backtracking once}, and Corollary~\ref{parental backtrackability} tell us that $\left(a, b\right)$ has exactly one grandparent in $S_{\F_q}^{\mathrm{adv}_\infty}$. Lemma~\ref{fourth power lemma} tells us that this grandparent has exactly four grandchildren, which are all in $S_{\F_q}^{\mathrm{back}_\infty}$. Furthermore, Lemma~\ref{advanceability extension} and Corollary~\ref{SFqadm formula} tell us that exactly one of these grandchildren is in $S_{\F_q}^{\mathrm{adv}_\infty}$. Thus, $S_{\F_q}^{\mathrm{back}_\infty}$ can be partitioned into disjoint subsets each consisting of four nodes (sharing a common indefinitely backtrackable grandparent) such that exactly one member of each four-node subset is in $S_{\F_q}^{\mathrm{adv}_\infty}$, so exactly one quarter of $S_{\F_q}^{\mathrm{back}_\infty}$ is in $S_{\F_q}^{\mathrm{adv}_\infty}$.
\end{proof}

Note that we are now able to prove a special case of part (2) of Theorem~\ref{population reversal main theorem}, as the following corollary makes clear.

\begin{corollary}\label{advance backtrack count equality}
    Let $q$ be a prime power with $q \equiv 5 \bmod 8$. Then $\left|S_{\F_q}^{\mathrm{adv}_\infty}\right| = \left|S_{\F_q}^{\mathrm{back}_\infty}\right|$.
\end{corollary}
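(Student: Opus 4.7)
The plan is to derive this as an immediate consequence of Theorem~\ref{quarter overlap}, which gives two symmetric statements about the intersection $S_{\F_q}^{\mathrm{adv}_\infty}\cap S_{\F_q}^{\mathrm{back}_\infty}$.

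Specifically, the first half of Theorem~\ref{quarter overlap} yields
\[
\bigl|S_{\F_q}^{\mathrm{adv}_\infty}\cap S_{\F_q}^{\mathrm{back}_\infty}\bigr|=\tfrac{1}{4}\bigl|S_{\F_q}^{\mathrm{adv}_\infty}\bigr|,
\]
and the second half yields
\[
\bigl|S_{\F_q}^{\mathrm{adv}_\infty}\cap S_{\F_q}^{\mathrm{back}_\infty}\bigr|=\tfrac{1}{4}\bigl|S_{\F_q}^{\mathrm{back}_\infty}\bigr|.
\]
Equating the two right-hand sides gives $|S_{\F_q}^{\mathrm{adv}_\infty}|=|S_{\F_q}^{\mathrm{back}_\infty}|$, which is exactly the corollary.

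There is no real obstacle here because all the work has already been absorbed into Theorem~\ref{quarter overlap}; the corollary is essentially a one-line consequence. The only thing to be careful about is to note that both assertions of Theorem~\ref{quarter overlap} are about the same intersection set $S_{\F_q}^{\mathrm{adv}_\infty}\cap S_{\F_q}^{\mathrm{back}_\infty}$ (this is clear, but worth stating explicitly so the reader sees why the two quarter-proportion statements can be compared directly). No auxiliary lemmas or additional computations are needed.
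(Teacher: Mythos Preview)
Your proposal is correct and matches the paper's approach exactly: the paper states this corollary immediately after Theorem~\ref{quarter overlap} without an explicit proof, treating it as the obvious consequence of equating the two ``one quarter'' statements about the common intersection $S_{\F_q}^{\mathrm{adv}_\infty}\cap S_{\F_q}^{\mathrm{back}_\infty}$.
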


\begin{remark*}\label{lambda advance backtrack count equality}
    This is basically Corollary Note that Corollary~\ref{advance backtrack count equality} also applies to $k$-values, i.e., $\left|T_{\F_q}^{\mathrm{adv}_\infty}\right| = \left|T_{\F_q}^{\mathrm{back}_\infty}\right|$, as one can deduce from Lemma~\ref{improved biconditional}. In fact, Theorem~\ref{quarter overlap} also applies to $k$-values for the same reason.
\end{remark*}

\section{Reversal symmetry}\label{Reversal symmetry}
As a more convenient way to prove Corollary~\ref{introduction reversal}, we take advantage of birationality between the twice-advanceability elliptic curve $y^2 = 2x\!\left(1+x^2\right)$, where $k=x^2$ in the proof of Theorem~\ref{Legendre elliptic curve counts}, and the twice-backtrackability curve $\mu^4 = 1-k^2$ from Remark~\ref{lambda backtracking criteria}.

In order to prove these theorems pertaining to reversal symmetry, we find it useful to examine the relationship between the curve $y^2=2x(1+x^2)$ from the proof of \ref{Legendre elliptic curve counts} providing square roots $x$ of twice-advanceable $k$-values and the curve $\mu^4 = 1-k^2$ from Remark~\ref{lambda backtracking criteria} providing twice-backtrackable $k$-values.

As the proof of Theorem~\ref{Legendre elliptic curve counts} tells us, any twice-advanceable $k$-value is of the form $x^2$ for some $(x, y)$ satisfying $y^2 = 2x(1+x^2)$; in this case, we have
$$x^2 \overset{\mathrm{k}}{\mapsto} \frac{2x}{1+x^2} = \left(\frac{y}{1+x^2}\right)^2 \overset{\mathrm{k}}{\mapsto} \frac{\pm 2y}{(x+1)^2},$$
thus providing descendants for all twice-advanceable $k$-values and, in finite fields where $-1$ is not a fourth power, providing another method to illuminate which child is indefinitely advanceable. Analogously, Remark~\ref{lambda backtracking criteria} tells us that any twice-backtrackable $k$-value is of the form $k$ for some $(k, \mu)$ satisfying $\mu^4 = 1-k^2$, in which case we have
$$k \overset{\mathrm{k}}{\mapsfrom} \left(\frac{k}{1+\mu^2}\right)^2 = \frac{1-\mu^2}{1+\mu^2} \overset{\mathrm{k}}{\mapsfrom} \left(\frac{1\pm\mu}{1\mp\mu}\right)^2,$$
thus providing ancestors for all twice-backtrackable $k$-values and, in finite fields where $-1$ is not a fourth power, providing another method to illuminate which parent is indefinitely backtrackable.

The existence of analogous chains suggests some relation between the set of twice-advanceable $k$-values and the set of twice-backtrackable $k$-values. We know that $\mu^4 = 1-k^2$ is birational to some elliptic curve, and it turns out to be birational to $y^2=2x(1+x^2)$. Two of the birational maps between them are given by:
\begin{gather*}
    (x, y) = \!\left(\frac{\mu^2}{1+k}, \frac{2\mu}{1+k}\right) \qquad \text{ and } \qquad (k, \mu) = \!\left(\frac{1-x^2}{1+x^2}, \frac{y}{1+x^2}\right) \\
    (x, y) = \!\left(\frac{\mu^2}{1+k}, \frac{-2\mu}{1+k}\right) \qquad \text{ and } \qquad (k, \mu) = \!\left(\frac{1-x^2}{1+x^2}, \frac{-y}{1+x^2}\right)\!.
\end{gather*}
(That these are birational maps between the curves $C_{y^2=2x(1+x^2)}$ and $C_{\mu^4=1-k^2}$ can easily be verified.) Both of these have $k = \frac{1-x^2}{1+x^2}$ (and therefore $x^2 = \frac{1-k}{1+k}$, and in fact, we also have the following commutative diagram in the sense that $\dfrac{1-\frac{2x}{1+x^2}}{1+\frac{2x}{1+x^2}} = \dfrac{1-\mu^2}{1+\mu^2}$ and $\dfrac{1-\frac{2y}{(x+1)^2}}{1+\frac{2y}{(x+1)^2}}, \dfrac{1-\frac{-2y}{(x+1)^2}}{1+\frac{-2y}{(x+1)^2}} \in \left\{\left(\dfrac{1 + \mu}{1 - \mu}\right)^2, \left(\dfrac{1 - \mu}{1 + \mu}\right)^2\right\}$, thus switching $x^2 \overset{\mathrm{k}}{\mapsto} \frac{2x}{1+x^2} \overset{\mathrm{k}}{\mapsto} \frac{\pm 2y}{(x+1)^2}$ and $k \overset{\mathrm{k}}{\mapsfrom} \frac{1-\mu^2}{1+\mu^2} \overset{\mathrm{k}}{\mapsfrom} \left(\frac{1\pm\mu}{1\mp\mu}\right)^2$ by the Möbius transformation $z \mapsto \frac{1-z}{1+z}$, as illustrated in Figure~\ref{fig:chain correspondences}.

\begin{figure}[h!]
    \includegraphics[width=0.95\linewidth]{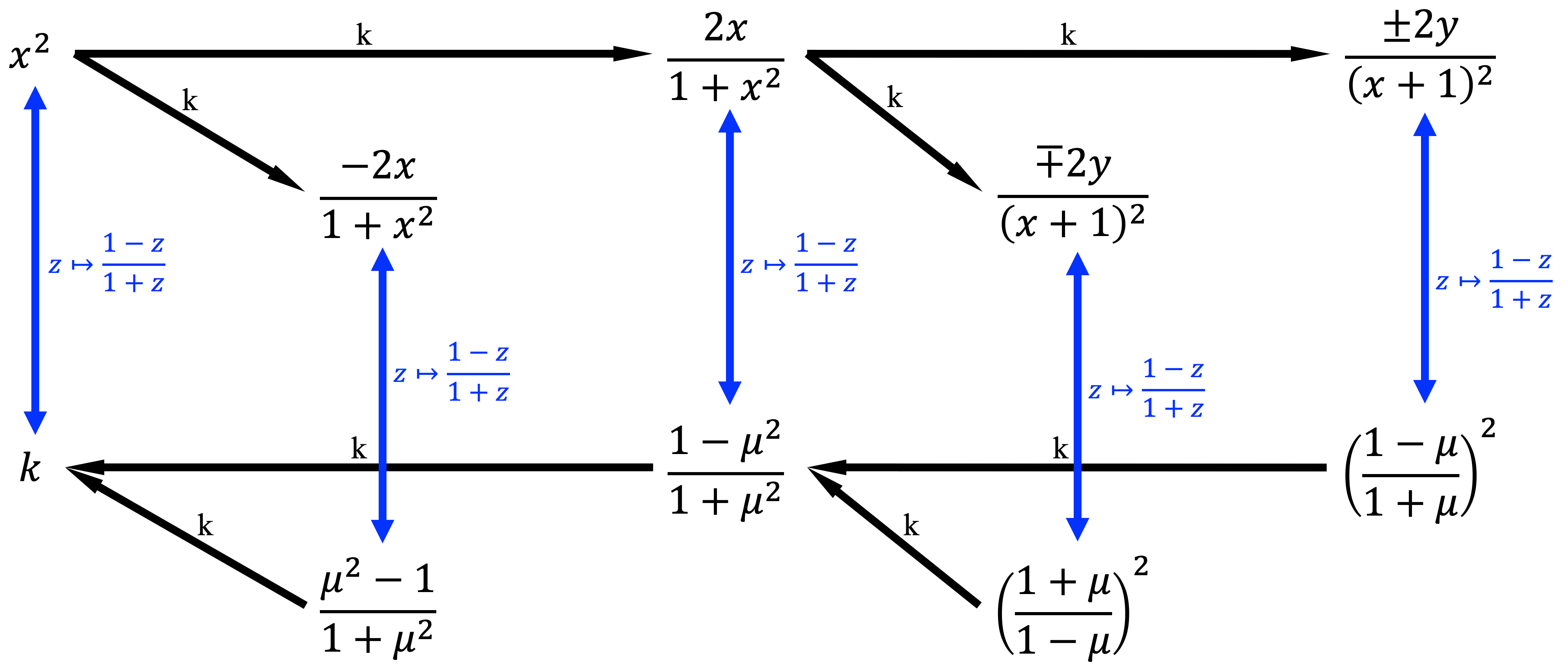}
    \caption{A demonstration of the correspondences between members of $\Sadv{2}{K}$ and $\Sback{2}{K}$ using $(x, y) = \left(\frac{\mu^2}{1+k}, \frac{\pm 2\mu}{1+k}\right)$ and $(k, \mu) = \left(\frac{1-x^2}{1+x^2}, \frac{\pm y}{1+x^2}\right)$}
    \label{fig:chain correspondences}
\end{figure}
Moreover, it is not hard to verify that the two birational maps provided above are the only two birational maps that fit into this diagram, i.e., the only two birational maps such that $z \mapsto \frac{1-z}{1+z}$ (or any Möbius transformation, for that matter) switches $x^2 \overset{\mathrm{k}}{\mapsto} \frac{2x}{1+x^2} \overset{\mathrm{k}}{\mapsto} \frac{2y}{(x+1)^2}$ with $k \overset{\mathrm{k}}{\mapsfrom} \frac{1-\mu^2}{1+\mu^2} \overset{\mathrm{k}}{\mapsfrom} \left(\frac{1\pm\mu}{1\mp\mu}\right)^2$.

This helps us find a contravariant involution $z \mapsto \frac{1-z}{1+z}$ of $T_K$ (contravariant in the sense that it switches advancement and backtracking); in fact, it holds in a very general context independent of previous discussion.

\begin{lemma}\label{self-inverse switch}
    Let $K$ be any field with characteristic not equal to two. The involution $\sigma: k \mapsto \frac{1-k}{1+k}$ of $T_K$ switches advancement and backtracking, i.e., $\sigma\!\left(k_2\right) \overset{\mathrm{k}}{\mapsto} \sigma\!\left(k_1\right)$ if and only if $k_1 \overset{\mathrm{k}}{\mapsto} k_2$.
\end{lemma}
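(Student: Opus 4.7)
The plan is to verify this statement by a direct algebraic calculation. The $k$-advancement relation $k_1 \overset{\mathrm{k}}{\mapsto} k_2$ is by definition $(1+k_1)^2 k_2^2 = 4k_1$ (together with $k_1, k_2 \in T_K$), and the condition $\sigma(k_2) \overset{\mathrm{k}}{\mapsto} \sigma(k_1)$ unpacks to another polynomial identity in $k_1, k_2$. So the task reduces to showing that these two polynomial conditions coincide.

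First I would check that $\sigma$ is a well-defined involution on $T_K$. Since the characteristic is not two, $\sigma(k)$ is defined for all $k \neq -1$; a quick check shows $\sigma(k) \in \{0, 1, -1\}$ only when $k \in \{1, 0, -1 \text{ (impossible)}\}$, so $\sigma$ sends $T_K$ to $T_K$. The involution identity $\sigma \circ \sigma = \mathrm{id}$ is a one-line computation: $\sigma(\sigma(k)) = \frac{(1+k) - (1-k)}{(1+k)+(1-k)} = k$.

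Next I would expand the defining equation of $\sigma(k_2) \overset{\mathrm{k}}{\mapsto} \sigma(k_1)$. Using $1 + \sigma(k_2) = \frac{2}{1+k_2}$, the equation $(1+\sigma(k_2))^2 \sigma(k_1)^2 = 4\sigma(k_2)$ becomes
\[
\frac{4(1-k_1)^2}{(1+k_2)^2 (1+k_1)^2} \;=\; \frac{4(1-k_2)}{1+k_2},
\]
which, after clearing denominators (note $1+k_1, 1+k_2 \neq 0$ because $k_1, k_2 \in T_K$), is equivalent to
\[
(1-k_1)^2 \;=\; (1-k_2^2)(1+k_1)^2.
\]
On the other hand, the original relation $(1+k_1)^2 k_2^2 = 4k_1$ can be rewritten using the identity $(1+k_1)^2 - 4k_1 = (1-k_1)^2$ as
\[
(1+k_1)^2 - (1+k_1)^2 k_2^2 \;=\; (1-k_1)^2,
\]
i.e., $(1-k_2^2)(1+k_1)^2 = (1-k_1)^2$. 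These are the same equation, completing the proof.

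There is no real obstacle; the only thing to be careful about is ensuring that all denominators are nonzero, which is guaranteed by $k_1, k_2 \in T_K = K \setminus \{0, 1, -1\}$, and that the implication goes both ways, which it does because every step above is a genuine equivalence (clearing a nonzero denominator, applying a polynomial identity). The key algebraic fact driving the equivalence is the simple identity $(1+k)^2 - 4k = (1-k)^2$.
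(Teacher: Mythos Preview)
Your proof is correct and follows essentially the same approach as the paper's: both are direct algebraic verifications that the two advancement relations are equivalent, with the paper packaging the computation as a single identity showing that $(1+\sigma(k_2))^2\sigma(k_1)^2-4\sigma(k_2)$ equals a nonzero scalar multiple of $(1+k_1)^2k_2^2-4k_1$, whereas you reduce both sides to the common form $(1-k_1)^2=(1-k_2^2)(1+k_1)^2$. Your explicit use of the identity $(1+k)^2-4k=(1-k)^2$ makes the mechanism a bit more transparent, but the content is the same.
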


\begin{proof}[Proof of Lemma~\ref{self-inverse switch}]
    That $\sigma$ is an involution of $T_K$ and that $\sigma\!\left(k_2\right) \overset{\mathrm{k}}{\mapsto} \sigma\!\left(k_1\right)$ if and only if $k_1 \overset{\mathrm{k}}{\mapsto} k_2$ can easily be verified. Specifically for the latter,
    $$\sigma\!\left(k_1\right)^2\!\left(\sigma\!\left(k_2\right)+1\right)^2-4\sigma\!\left(k_1\right) = \frac{4}{(1+k_1)^2(1+k_2)^2} \left({k_1}^2\!\left(k_2+1\right)^2-4k_1\right).\qedhere$$
\end{proof}

Applying the contravariant involution $\sigma$ to $G_K$ and subgraphs of $G_K$ produces contravariant isomorphisms between various subgraphs.

\begin{corollary}\label{lambda-reversal}
    For any field $K$ with characteristic not equal to two, the graph $G_K$ is isomorphic to its own reversal, and for any $0\leq n\leq \infty$, the graph $G_K^{\mathrm{adv}_n}$ is isomorphic to the reversal of $G_K^{\mathrm{back}_n}$.
\end{corollary}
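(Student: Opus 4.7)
The proof reduces essentially to Lemma~\ref{self-inverse switch}. The plan is to exhibit the involution $\sigma: T_K \to T_K$ given by $\sigma(k) = \frac{1-k}{1+k}$ as the underlying vertex bijection of both isomorphisms.

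First, for the claim that $G_K$ is isomorphic to its reversal: since $\sigma$ is a bijection on the vertex set $T_K$, and Lemma~\ref{self-inverse switch} states that $(k_1, k_2) \in G_K$ if and only if $(\sigma(k_2), \sigma(k_1)) \in G_K$, the vertex map $\sigma$ sends edges of $G_K$ bijectively to reversed edges of $G_K$. This is exactly what it means for $\sigma$ to realize an isomorphism from $G_K$ to its reversal.

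For the second claim, I would show by induction on $n$ that $\sigma$ restricts to a bijection $T_K^{\mathrm{adv}_n} \to T_K^{\mathrm{back}_n}$. The base case $n=0$ is immediate, as both sets equal $T_K$. For the inductive step, note that $k \in T_K^{\mathrm{adv}_n}$ precisely when there is some $k' \in T_K^{\mathrm{adv}_{n-1}}$ with $k \overset{\mathrm{k}}{\mapsto} k'$; by Lemma~\ref{self-inverse switch} this is equivalent to $\sigma(k') \overset{\mathrm{k}}{\mapsto} \sigma(k)$, and by the inductive hypothesis $\sigma(k') \in T_K^{\mathrm{back}_{n-1}}$, so $\sigma(k) \in T_K^{\mathrm{back}_n}$. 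The reverse inclusion is symmetric because $\sigma$ is its own inverse. The $n = \infty$ case then follows by passing to the intersection of all finite stages. Combined with the edge-level correspondence from the previous paragraph, restricting $\sigma$ to these subsets yields the desired isomorphism between $G_K^{\mathrm{adv}_n}$ and the reversal of $G_K^{\mathrm{back}_n}$.

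There is no substantive obstacle here: Lemma~\ref{self-inverse switch} does all the heavy lifting at the level of single edges, and the only remaining task is the routine induction that propagates this correspondence from individual edges to length-$n$ chains, which is what witnesses $n$-times advanceability and backtrackability.
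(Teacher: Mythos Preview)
Your proposal is correct and follows essentially the same approach as the paper: both invoke Lemma~\ref{self-inverse switch} to exhibit $\sigma$ as a contravariant involution of $G_K$, and then argue that this restricts to the desired isomorphisms $G_K^{\mathrm{adv}_n} \to (G_K^{\mathrm{back}_n})^{\mathrm{rev}}$. The paper's proof is extremely terse---it simply asserts that the restriction works ``as a result''---whereas you spell out the routine induction showing $\sigma(T_K^{\mathrm{adv}_n}) = T_K^{\mathrm{back}_n}$; this extra detail is welcome but does not constitute a different strategy.
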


\begin{proof}[Proof of Corollary~\ref{lambda-reversal}]
    We know from Lemma~\ref{self-inverse switch} that $\sigma$ is a contravariant involution of $G_K$. As a result, $\sigma$ restricts to a contravariant isomorphism between $G_K^{\mathrm{adv}_n}$ and $G_K^{\mathrm{back}_n}$.
\end{proof}

Ideally, we would like to prove the general existence of an analogous contravariant isomorphism for nodes. However, as Remark~\ref{no sigma-lifting} will show, any such contravariant isomorphism would not be compatible with $\sigma$. However, Corollary~\ref{lambda-reversal} does allow us prove the similarities between $F_K^{\mathrm{adv}_\infty}$ and $F_K^{\mathrm{back}_\infty}$ outlined in Theorem~\ref{population reversal main theorem}, such as that the maximum tentacle length equals the maximum colon length, as a sign that reversal symmetry may also apply for nodes. This will first be proven for $k$-values then extended to nodes using Corollary~\ref{improved biconditional}.

\begin{corollary}\label{T correspondences}
    If $K$ is any field with characteristic not equal to two, then
    \begin{enumerate}
        \item For $n \geq 0$, $T_K^{\mathrm{adv}_n}=T_K^{\mathrm{adv}_\infty}$ if and only if $T_K^{\mathrm{back}_n}=T_K^{\mathrm{back}_\infty}$.
        \item For $n \geq 0$, $\Tadv{n}{\Fq} \cap \Tback{\infty}{\Fq} = T_{\Fq}^\mathrm{cyc}$ if and only if $\Tback{n}{\Fq} \cap \Tadv{\infty}{\Fq} = T_{\Fq}^\mathrm{cyc}$.
        \item For $0 \leq n \leq \infty$, $\abs{T_K^{\mathrm{adv}_n}}=\abs{T_K^{\mathrm{back}_n}}$, including $\abs{T_K^{\mathrm{adv}_n}}$ being finite if and only if $\abs{T_K^{\mathrm{back}_n}}$ is finite.
    \end{enumerate}
\end{corollary}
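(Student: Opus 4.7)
The plan is to leverage the contravariant involution $\sigma: k \mapsto \frac{1-k}{1+k}$ of $T_K$ established in Lemma~\ref{self-inverse switch} and Corollary~\ref{lambda-reversal}. Because $\sigma$ is an involution of $T_K$ that reverses the direction of every $k$-advancement arrow, I claim it induces, for each $0 \leq n \leq \infty$, a bijection $T_K^{\mathrm{adv}_n} \leftrightarrow T_K^{\mathrm{back}_n}$, and moreover fixes $T_K^{\mathrm{cyc}} := T_K^{\mathrm{adv}_\infty} \cap T_K^{\mathrm{back}_\infty}$ setwise. Once this is in hand, all three parts of the corollary reduce to the observation that $\sigma$ is a bijection on $T_K$ that preserves (or swaps) the relevant sets.

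First I would verify $\sigma(T_K^{\mathrm{adv}_n}) = T_K^{\mathrm{back}_n}$ for finite $n$ by induction: a length-$n$ advancement chain $k_0 \overset{\mathrm{k}}{\mapsto} k_1 \overset{\mathrm{k}}{\mapsto} \cdots \overset{\mathrm{k}}{\mapsto} k_n$ is transported by $\sigma$ into the chain $\sigma(k_n) \overset{\mathrm{k}}{\mapsto} \sigma(k_{n-1}) \overset{\mathrm{k}}{\mapsto} \cdots \overset{\mathrm{k}}{\mapsto} \sigma(k_0)$ via Lemma~\ref{self-inverse switch}, exhibiting $\sigma(k_0)$ as $n$-times backtrackable; applying $\sigma$ again (it being an involution) gives the reverse inclusion. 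For $n = \infty$, I would use that $\sigma$ is a bijection so that $\sigma\!\left(\bigcap_{n} T_K^{\mathrm{adv}_n}\right) = \bigcap_{n} \sigma(T_K^{\mathrm{adv}_n}) = \bigcap_{n} T_K^{\mathrm{back}_n}$. Intersecting the $n = \infty$ cases yields $\sigma(T_K^{\mathrm{cyc}}) = \sigma(T_K^{\mathrm{adv}_\infty}) \cap \sigma(T_K^{\mathrm{back}_\infty}) = T_K^{\mathrm{back}_\infty} \cap T_K^{\mathrm{adv}_\infty} = T_K^{\mathrm{cyc}}$.

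With these set-theoretic identities established, the three conclusions follow directly. Part (3) is immediate from $\sigma$ being a bijection of $T_K$ restricting to bijections $T_K^{\mathrm{adv}_n} \leftrightarrow T_K^{\mathrm{back}_n}$. For part (1), I would apply $\sigma$ to both sides of $T_K^{\mathrm{adv}_n} = T_K^{\mathrm{adv}_\infty}$ to obtain the equivalent identity $T_K^{\mathrm{back}_n} = T_K^{\mathrm{back}_\infty}$, and vice versa. For part (2), applying $\sigma$ to both sides of $\Tadv{n}{K} \cap \Tback{\infty}{K} = T_K^{\mathrm{cyc}}$ and using that $\sigma$ preserves intersections together with $\sigma(T_K^{\mathrm{cyc}}) = T_K^{\mathrm{cyc}}$ yields exactly $\Tback{n}{K} \cap \Tadv{\infty}{K} = T_K^{\mathrm{cyc}}$.

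I do not anticipate a substantive obstacle: Corollary~\ref{lambda-reversal} has already supplied the key contravariant isomorphism, and what remains is purely bookkeeping to transfer equalities of sets across $\sigma$. The only mildly delicate point is ensuring that the $n = \infty$ case is handled cleanly, which is why I would explicitly use the fact that bijections commute with arbitrary intersections before writing down the argument for parts (1) and (2).
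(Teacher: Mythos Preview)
Your proposal is correct and follows essentially the same route as the paper: both arguments hinge on the contravariant involution $\sigma$ from Lemma~\ref{self-inverse switch}/Corollary~\ref{lambda-reversal}, use that $\sigma(T_K^{\mathrm{adv}_n}) = T_K^{\mathrm{back}_n}$ and $\sigma(T_K^{\mathrm{cyc}}) = T_K^{\mathrm{cyc}}$, and then transport each equality across $\sigma$. If anything, your write-up is slightly more explicit than the paper's in spelling out the $n=\infty$ case via commuting $\sigma$ with the intersection, but the strategy is identical.
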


\begin{proof}[Proof of Corollary \ref{T correspondences}]
    All parts follow from the aforementioned involution $\sigma$ on $T_K$ from Corollary \ref{lambda-reversal} that switches $k$-advancement and $k$-backtracking. For the first part, if $T_K^{\mathrm{adv}_n}=T_K^{\mathrm{adv}_\infty}$, then $T_K^{\mathrm{back}_n} = \sigma\!\left(T_K^{\mathrm{adv}_n}\right) = \sigma\!\left(T_K^{\mathrm{adv}_\infty}\right) = T_K^{\mathrm{back}_\infty}$; the proof of the converse is analogous.
    
    For the second part, if $T_K^{\mathrm{adv}_n} \cap T_K^{\mathrm{back}_\infty}=T_{\Fq}^\mathrm{cyc}$, then $T_K^{\mathrm{back}_n} \cap T_K^{\mathrm{adv}_\infty} = \sigma\!\left(T_K^{\mathrm{adv}_n} \cap T_K^{\mathrm{back}_\infty}\right) = \sigma\!\left(T_{\Fq}^\mathrm{cyc}\right) = T_{\Fq}^\mathrm{cyc}$; the proof of the converse is analogous.
    
    The third part follows easily from the existence of a bijection between $T_K^{\mathrm{adv}_n}$ and $T_K^{\mathrm{back}_n}$.
\end{proof}

\begin{corollary}\label{S correspondences}
    If $K$ is any field with characteristic not equal to two, then
    \begin{enumerate}
        \item For $n \geq 0$, $\Sadv{n}{K}=\Sadv{\infty}{K}$ if and only if $\Sback{n}{K}=\Sback{\infty}{K}.$
        \item For $n \geq 0$, $\Sadv{n}{K} \cap \Sback{\infty}{K} = S_K^\mathrm{cyc}$ if and only if $\Sback{n}{K} \cap \Sadv{\infty}{K} = S_K^\mathrm{cyc}$.
        \item For $0 \leq n \leq \infty$, $\abs{\Sadv{n}{K}}=\abs{\Sback{n}{K}}$, including $\abs{S_K^{\mathrm{adv}_n}}$ being finite if and only if $\abs{S_K^{\mathrm{back}_n}}$ is finite.
    \end{enumerate}
\end{corollary}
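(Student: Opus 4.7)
The plan is to derive Corollary~\ref{S correspondences} directly from its $T$-analog, Corollary~\ref{T correspondences}, by transporting each statement along the parametrization supplied by Corollary~\ref{improved biconditional}. Concretely, the map $\Phi\colon S_K\to K^\times\times T_K$ defined by $\Phi(a,b)=(a,b/a)$ is a bijection: the nontriviality conditions $a,b,a\pm b\neq 0$ are precisely $a\in K^\times$ together with $b/a\in T_K$. By Corollary~\ref{improved biconditional}, this bijection restricts, for every $0\leq n\leq \infty$, to bijections
\[
\Phi\colon \Sadv{n}{K}\xrightarrow{\ \sim\ } K^\times\times \Tadv{n}{K},\qquad \Phi\colon \Sback{n}{K}\xrightarrow{\ \sim\ } K^\times\times \Tback{n}{K}.
\]
In particular, $\Phi$ identifies $S_K^{\mathrm{cyc}}=\Sadv{\infty}{K}\cap\Sback{\infty}{K}$ with $K^\times\times T_K^{\mathrm{cyc}}$, where $T_K^{\mathrm{cyc}}:=\Tadv{\infty}{K}\cap\Tback{\infty}{K}$.

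For part (3), the bijection immediately gives $|\Sadv{n}{K}|=|K^\times|\cdot|\Tadv{n}{K}|$ and the corresponding identity on the backtracking side, so the equality $|\Tadv{n}{K}|=|\Tback{n}{K}|$ from Corollary~\ref{T correspondences}(3) transfers directly, and the finiteness clause follows since $|K^\times|\geq 1$. For part (1), since $K^\times$ is nonempty, $\Sadv{n}{K}=\Sadv{\infty}{K}$ iff $K^\times\times\Tadv{n}{K}=K^\times\times\Tadv{\infty}{K}$, i.e.\ iff $\Tadv{n}{K}=\Tadv{\infty}{K}$; a symmetric statement holds on the backtracking side, so Corollary~\ref{T correspondences}(1) yields the biconditional. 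For part (2), the same reasoning shows
\[
\Sadv{n}{K}\cap\Sback{\infty}{K}=S_K^{\mathrm{cyc}}\ \Longleftrightarrow\ \Tadv{n}{K}\cap\Tback{\infty}{K}=T_K^{\mathrm{cyc}},
\]
and analogously for the swapped intersection; Corollary~\ref{T correspondences}(2) then closes the loop.

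The only real thing to verify — and it is not much of an obstacle — is that $\Phi$ behaves well on intersections, i.e.\ that $\Phi(\Sadv{n}{K}\cap\Sback{\infty}{K})=K^\times\times(\Tadv{n}{K}\cap\Tback{\infty}{K})$ and similarly for the cycle set. This is automatic because $\Phi$ is a bijection and each of the two $S$-sets being intersected is, by Corollary~\ref{improved biconditional}, the $\Phi$-preimage of a set of the form $K^\times\times(\cdot)$; hence intersecting commutes with $\Phi$. With this housekeeping in place, every part of the corollary reduces to the corresponding part of Corollary~\ref{T correspondences}, and no new computation is required.
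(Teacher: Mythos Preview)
Your proposal is correct and follows essentially the same approach as the paper: both reduce each part of the corollary to the corresponding part of Corollary~\ref{T correspondences} via the parametrization $S_K^{\mathrm{adv}_n}=\{(a,ka):a\in K^\times,\ k\in T_K^{\mathrm{adv}_n}\}$ (and its backtracking analogue) from Corollary~\ref{improved biconditional}. Your write-up is simply more explicit, spelling out the bijection $\Phi$ and the routine check that it respects intersections, whereas the paper's proof is a one-sentence appeal to these two corollaries.
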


\begin{proof}[Proof of Corollary~\ref{S correspondences}]
    Each part follows from the respective part of Corollary~\ref{T correspondences} because (as Corollary~\ref{improved biconditional} assures us) $S_K^{\mathrm{adv}_n} = \{(a, ka): a \in K^\times, k \in T_K^{\mathrm{adv}_n}\}$ and $S_K^{\mathrm{back}_n} = \{(a, ka): a \in K^\times, k \in T_K^{\mathrm{back}_n}\}$ for $0 \leq n \leq \infty$.
\end{proof}

\section{Jellyfish structure}\label{Jellyfish structure}
Now, we shall figure out the structure of the jellyfish in order to prove part (5) of Theorems~\ref{5mod8results}~and~\ref{5mod8backtrackingresults}, and well as proving Corollary~\ref{introduction reversal}.

\begin{theorem}\label{tentacle colon structure}
    Let $q$ be a prime power with $q \equiv 5 \bmod 8$. The graph $F_{\F_q}^{\mathrm{adv}_\infty}$ consists of bell-head (i.e., simple) cycles where each node in the cycle has one tentacle, which is Y-shaped and has length two, and the graph $F_{\F_q}^{\mathrm{back}_\infty}$ consists of bell-head cycles where each node in the cycle has one colon, which is Y-shaped and has length two.
\end{theorem}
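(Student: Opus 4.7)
The plan is to analyze $F_{\Fq}^{\mathrm{adv}_\infty}$ and $F_{\Fq}^{\mathrm{back}_\infty}$ in parallel. By Corollary~\ref{SFqadm formula}, advancement restricted to $\Sadv{\infty}{\Fq}$ is single-valued, so $F_{\Fq}^{\mathrm{adv}_\infty}$ is a functional graph on a finite set and each connected component consists of a simple cycle with in-trees hanging off. Symmetrically, the content of Lemma~\ref{backtrackability extension}, as extracted from its proof, is that exactly one of the two parents of a node in $\Sback{2}{\Fq}$ lies in $\Sback{2}{\Fq}$, so each node of $F_{\Fq}^{\mathrm{back}_\infty}$ has in-degree one within the induced subgraph; its reversal is therefore functional, and each component is a simple cycle with out-trees hanging off. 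A pigeonhole argument on forward orbits identifies the cycles of both graphs with $S_{\Fq}^{\mathrm{cyc}}$: if $v \in \Sadv{\infty}{\Fq} \cap \Sback{\infty}{\Fq}$, arbitrary-length ancestor chains of $v$ live in the finite set $\Sadv{\infty}{\Fq}$ and must repeat, and the forward orbit from any repeating ancestor necessarily passes through $v$.

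For the tentacle shape, fix $v \in S_{\Fq}^{\mathrm{cyc}}$. Lemma~\ref{backtracking once} gives $v$ exactly two parents, both in $\Sadv{\infty}{\Fq}$; one is the cycle predecessor $v_{\mathrm{prev}}$, and the other $u$ must be off-cycle because $v$ has a unique in-neighbor along its cycle. Corollary~\ref{parental backtrackability}, applied using $v_{\mathrm{prev}} \in \Sback{1}{\Fq}$, forces $u \in \Sback{1}{\Fq}$, so $u$ itself has two parents $w_1, w_2 \in \Sadv{\infty}{\Fq}$, forming the two arms of the Y at depth two. If some $w_i$ lay in $\Sback{1}{\Fq}$, then $u \in \Sback{2}{\Fq} = \Sback{\infty}{\Fq}$ by Corollary~\ref{backtracking twice to infinite}, placing $u \in S_{\Fq}^{\mathrm{cyc}}$, a contradiction; hence $w_1, w_2$ have no parents and are the leaves of the length-two Y-tentacle at $v$.

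The colon shape follows by a dual argument. At $v \in S_{\Fq}^{\mathrm{cyc}} \subseteq \Sadv{2}{\Fq}$, Lemma~\ref{fourth power lemma} places both children of $v$ in $\Sadv{1}{\Fq}$, and Lemma~\ref{advanceability extension} singles out the cycle successor as the unique child in $\Sadv{2}{\Fq} = \Sadv{\infty}{\Fq}$. The other child $c' \in \Sadv{1}{\Fq} \setminus \Sadv{\infty}{\Fq}$ lies in $\Sback{\infty}{\Fq} \setminus S_{\Fq}^{\mathrm{cyc}}$, because it is twice-backtrackable through the parent chain at $v \in \Sback{1}{\Fq}$ and hence in $\Sback{\infty}{\Fq}$ by Corollary~\ref{backtracking twice to infinite}. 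Iterating one more layer, $c'$'s two children $d_1, d_2$ are again in $\Sback{\infty}{\Fq}$ but cannot themselves advance (since $c' \notin \Sadv{2}{\Fq}$), so each $d_i \notin \Sadv{1}{\Fq}$ and is a leaf of the length-two Y-colon at $v$.

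The main subtlety I anticipate is extracting and applying the parent-side content of Lemma~\ref{backtrackability extension} correctly to secure the in-degree-one property of $F_{\Fq}^{\mathrm{back}_\infty}$, together with the neighborhood bookkeeping that forces the colon to terminate at depth two rather than extending further. Once that is settled, the rest is routine, and the count $1 + 1 + 2 = 4$ nodes per cycle vertex lines up with $|\Sadv{\infty}{\Fq}| = 4|S_{\Fq}^{\mathrm{cyc}}|$ from Theorem~\ref{quarter overlap} as an arithmetic sanity check.
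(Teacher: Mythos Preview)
Your proposal is correct and follows essentially the same approach as the paper's proof: both establish that $F_{\Fq}^{\mathrm{adv}_\infty}$ is functional (via Corollary~\ref{SFqadm formula}) and $F_{\Fq}^{\mathrm{back}_\infty}$ is co-functional (via the parent-side reading of Lemma~\ref{backtrackability extension}), then analyze the two parents (resp.\ two children) of each cycle node using Corollary~\ref{parental backtrackability} and Lemma~\ref{backtrackability extension} (resp.\ Lemma~\ref{fourth power lemma} and Lemma~\ref{advanceability extension}) to force the Y-shape of depth two. The only cosmetic difference is that you first identify the cycle nodes with $S_{\Fq}^{\mathrm{cyc}}$ by a pigeonhole argument and then deduce $u\notin\Sback{2}{\Fq}$ from $u$ being off-cycle, whereas the paper reads this off directly from Lemma~\ref{backtrackability extension}; your route also makes the disjointness of tentacles implicit in the functional-graph framework rather than arguing it separately as the paper does.
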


\begin{proof}[Proof of Theorem~\ref{tentacle colon structure}]
    Corollary~\ref{SFqadm formula} and Lemma~\ref{backtrackability extension} tell us that advancement and backtracking are single-valued on $S_{\Fq}^\mathrm{cyc}$, so all cycles are simple (i.e., have no branching), and $S_{\Fq}^\mathrm{cyc}$ is their disjoint union.
    
    To prove the statement about tentacles in $F_{\F_q}^{\mathrm{adv}_\infty}$, note that any node in $S_{\Fq}^\mathrm{cyc}$ is in $S_{\F_q}^{\mathrm{back}_2}$, so Corollary~\ref{parental backtrackability} and Lemma~\ref{backtrackability extension} tell us that of its two parents, one is in $S_{\F_q}^{\mathrm{back}_2}$ and therefore in $S_{\F_q}^{\mathrm{cyc}}$ (because it is already known to be in $S_{\F_q}^{\mathrm{adv}_\infty}$ and $S_{\F_q}^{\mathrm{back}_\infty}=S_{\F_q}^{\mathrm{back}_2}$), and the other is $S_{\F_q}^{\mathrm{back}_1} \setminus S_{\F_q}^{\mathrm{cyc}}$ and has two parents, neither of which is backtrackable, thus forming a Y-shaped tentacle of length two. Overlap between different cyclic nodes' tentacles is ruled out because advancement on $S_{\F_q}^{\mathrm{adv}_\infty}$ is single-valued by \ref{SFqadm formula}.
    
    To prove the statement about colons in $F_{\F_q}^{\mathrm{back}_\infty}$, note that any node in $S_{\Fq}^\mathrm{cyc}$ is in $S_{\F_q}^{\mathrm{adv}_2}$, so Lemmas \ref{fourth power lemma}~and~\ref{advanceability extension} tell us that of its two children, one is in $S_{\F_q}^{\mathrm{adv}_2}$ and therefore in $S_{\F_q}^{\mathrm{cyc}}$ (because it is already known to be in $S_{\F_q}^{\mathrm{back}_\infty}$ and $S_{\F_q}^{\mathrm{adv}_\infty}=S_{\F_q}^{\mathrm{adv}_2}$), and the other is $S_{\F_q}^{\mathrm{adv}_1} \setminus S_{\F_q}^{\mathrm{cyc}}$ and has two children, neither of which is advanceble, thus forming a Y-shaped colon of length two. Overlap between different cyclic nodes' colons is ruled out because backtracking on $S_{\F_q}^{\mathrm{back}_\infty}$ is single-valued by \ref{backtrackability extension}.

    Overlap between a tentacle and a colon, between one jellyfish's tentacle and another jellyfish's cycle, or between one jellyfish's colon and another jellyfish's cycle is ruled out by the fact that cyclic nodes are both indefinitely advanceable and indefinitely backtrackable, tentacle nodes are indefinitely advanceable but not indefinitely backtrackable, and colon nodes are indefinitely backtrackable but not indefinitely advanceable.
\end{proof}

This allows us to prove the analogue of Corollary~\ref{lambda-reversal} for the nodes, as was speculated about in the discussion immediately after its proof, for odd $q \not\equiv 1 \bmod 8$.

\begin{corollary}\label{node-reversal}
    Let $q$ be an odd prime power not congruent to 1 modulo 8. Then $F_{\Fq}^{\mathrm{adv}_\infty}$ is isomorphic to the reversal of (i.e., contravariantly isomorphic to) $F_{\Fq}^{\mathrm{back}_\infty}$.
\end{corollary}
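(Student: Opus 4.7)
The plan is to construct the contravariant isomorphism combinatorially using the explicit structural descriptions of the two graphs. First I would observe that $S_{\Fq}^{\mathrm{cyc}}=\Sadv{\infty}{\Fq}\cap\Sback{\infty}{\Fq}$ is the common vertex set of the cyclic skeletons, and that restricted to $S_{\Fq}^{\mathrm{cyc}}$ AGM advancement is a single-valued bijection whose inverse is backtracking. When $q\equiv 5\bmod 8$ this follows from Corollary \ref{SFqadm formula} together with Lemma \ref{backtrackability extension} and Corollary \ref{backtracking twice to infinite}; when $q\equiv 3\bmod 4$, advancement single-valuedness is Theorem \ref{3mod4results}(4), while backtracking single-valuedness follows from Lemma \ref{backtracking once} and the non-squareness of $-1$ in $\Fq$ (of $a^2-b^2$ and $b^2-a^2$ exactly one is a square, so each node in $\Sback{1}{\Fq}$ has a unique backtrackable parent and $\Sback{\infty}{\Fq}=\Sback{1}{\Fq}$). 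Consequently the cyclic skeletons of $F_{\Fq}^{\mathrm{adv}_\infty}$ and $F_{\Fq}^{\mathrm{back}_\infty}$ are literally the same disjoint union of simple directed cycles, and any directed $n$-cycle is self-isomorphic to its own reversal via $v_i\mapsto v_{n+2-i}$ (indices mod $n$).

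Next I would invoke the structure theorems to describe what is attached to each cyclic node. For $q\equiv 5\bmod 8$, Theorem \ref{tentacle colon structure} already supplies the full description: a single Y-shaped length-two tentacle per cyclic node in $F_{\Fq}^{\mathrm{adv}_\infty}$, and a single Y-shaped length-two colon per cyclic node in $F_{\Fq}^{\mathrm{back}_\infty}$. For $q\equiv 3\bmod 4$, Theorem \ref{3mod4results}(5) handles $F_{\Fq}^{\mathrm{adv}_\infty}$; the backtracking counterpart needs to be supplied by combining the previous paragraph with the observation that the two children of any cyclic node $(a,b)$ automatically satisfy $((a+b)/2)^2-ab=((a-b)/2)^2\in(\Fq^\times)^2$, so both children lie in $\Sback{1}{\Fq}=\Sback{\infty}{\Fq}$, one being the next cyclic node and the other a non-cyclic leaf. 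This gives a single length-one colon attached to each cyclic node of $F_{\Fq}^{\mathrm{back}_\infty}$, and reversing a colon in either case produces a directed subtree locally isomorphic to the corresponding tentacle.

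Finally I would assemble the pieces: match each cycle of $F_{\Fq}^{\mathrm{adv}_\infty}$ with its reversal in $F_{\Fq}^{\mathrm{back}_\infty}$ via the reflection above, and at each cyclic node extend the matching across the attached tentacle and reversed colon by any of the finitely many shape-preserving bijections between them. The main obstacle is the $q\equiv 3\bmod 4$ backtracking structural statement, which the authors flag as not explicitly recorded in \cite{MR4567422, mcspirit2023hypergeometryagmfinitefields}; once that is in place, the construction reduces to a routine piece-wise matching of combinatorially identical pieces. The introduction's remark preceding Corollary \ref{introduction reversal} emphasizes that no canonical lift of the contravariant involution $\sigma$ on $T_K$ to the level of nodes is available, so a non-canonical combinatorial construction is exactly the right approach.
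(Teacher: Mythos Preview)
Your proposal is correct and follows essentially the same approach as the paper's proof: split into the two residue classes, use the structure theorems to see that the cyclic skeleton is a disjoint union of simple directed cycles (hence self-reversal-isomorphic), verify that the appendages attached to each cyclic node on the advancement side and on the backtracking side have the same combinatorial shape, and then glue. Your treatment of the $q\equiv 3\bmod 4$ backtracking side is in fact slightly more self-contained than the paper's, since you derive $\Sback{\infty}{\Fq}=\Sback{1}{\Fq}$ and the length-one colon structure directly from Lemma~\ref{backtracking once} and the identity $((a+b)/2)^2-ab=((a-b)/2)^2$, whereas the paper appeals to the discussion in \cite{MR4567422} for the analogous facts.
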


\begin{proof}[Proof of Corollary~\ref{node-reversal}]
    The cases of $\qthreemodfour$ and $\qfivemodeight$ shall be proven separately. For the case of $\qfivemodeight$, the result follows easily from Theorem~\ref{tentacle colon structure} because of the shared structure between tentacles and colons. Specifically, Theorem~\ref{tentacle colon structure} tells us that the cycles in $F_{\Fq}^{\mathrm{adv}_\infty}$ and $F_{\Fq}^{\mathrm{back}_\infty}$ are all simple, so $F_{\Fq}^\mathrm{cyc}$ must consist of simple cycles (no branching) and therefore must be isomorphic to its own reversal. Because we know from Corollary~\ref{improved biconditional} and Lemma~\ref{lambda-reversal} (or, since $\qfivemodeight$, from Theorem~\ref{tentacle colon structure}) that tentacles and colons have analogous shapes, so because $F_{\Fq}^\mathrm{cyc}$ is isomorphic to its own reversal, the desired result follows.

    The case of $\qthreemodfour$ is analogous, again because of the shared structure between tentacles and colons. Theorem~1 part (3) of \cite{MR4567422} tells us that the cycles in $F_{\Fq}^{\mathrm{adv}_\infty}$, i.e., $F_{\Fq}^\mathrm{cyc}$, must consist of bell-head cycles (i.e., no branching), and so $F_K^\mathrm{cyc}$ must be isomorphic to its own reversal. Theorem~1 part (3) of \cite{MR4567422} also tells us that each cycle node is pointed to by one tentacle, which has length one and no branching. Furthermore, as pages 1 and 2 of \cite{MR4567422} explain, each node in $S_{\Fq}^{\mathrm{adv}_1}$, including $F_{\Fq}^\mathrm{cyc}$, has two children, of which exactly one is in $S_{\Fq}^{\mathrm{adv}_1}$. For parents in $F_{\Fq}^\mathrm{cyc}$, this other child is a length-one colon. Thus, when $\qthreemodfour$, tentacles and colons have analogous shapes (which could alternatively could have been deduced from Corollary~\ref{improved biconditional} and Lemma~\ref{lambda-reversal}), so because $F_{\Fq}^\mathrm{cyc}$ is isomorphic to its own reversal, the desired result follows.
\end{proof}

\begin{remark}\label{no sigma-lifting}
    Here, we explain why the contravariant isomorphism for nodes guaranteed by Corollary~\ref{node-reversal} cannot be compatible with the involution $\sigma$ in Lemma~\ref{self-inverse switch} for $k$-values. The skeletons of $F_{\F_{29}}^{\mathrm{adv}_\infty}$ and $F_{\F_{29}}^{\mathrm{back}_\infty}$ (namely, their graph structures without node labels) are shown in Figure~\ref{fig:F29skeletons earlier copy}, and note that they are in contravariant isomorphism.
    \begin{figure}[h]
        \begin{subfigure}{0.3\textwidth}
            \includegraphics[height=\linewidth, angle=90]{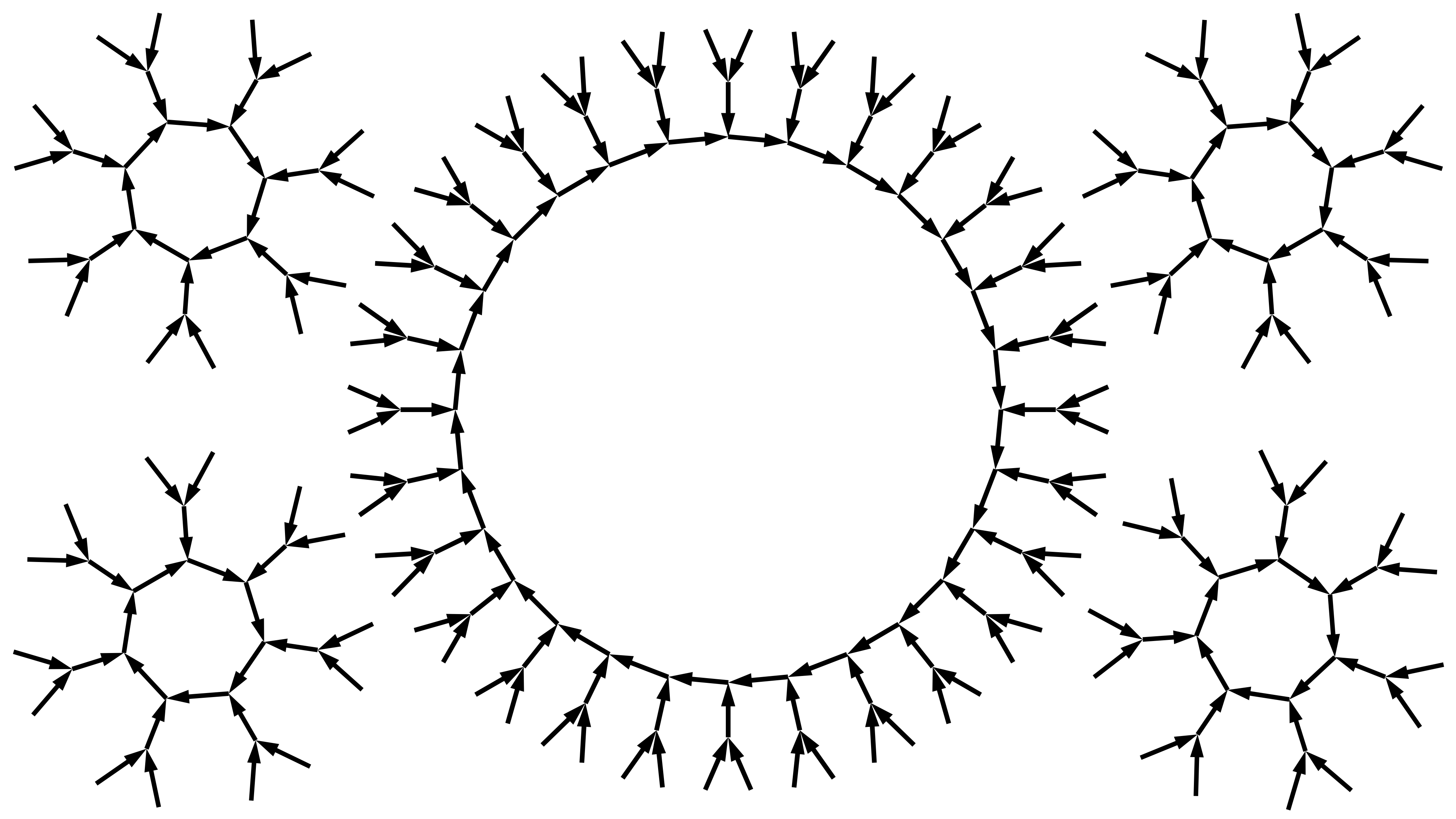} 
            \caption{$F_{\F_{29}}^{\mathrm{adv}_\infty}$ without node labels}
        \end{subfigure}
        \hspace{0.166\textwidth}
        \begin{subfigure}{0.3\textwidth}
            \includegraphics[height=\linewidth, angle=90]{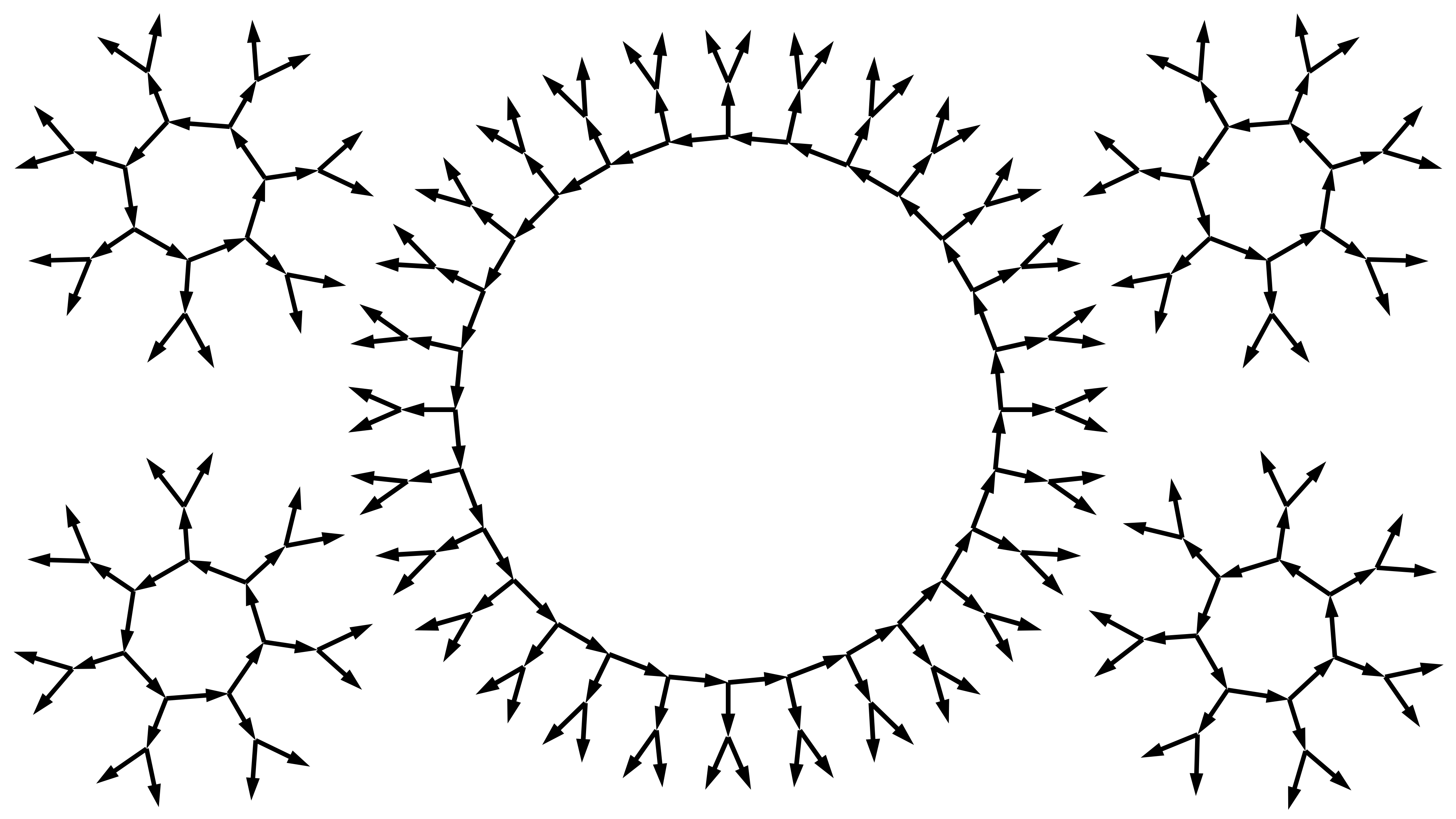}
            \caption{$F_{\F_{29}}^{\mathrm{back}_\infty}$ without node labels}
        \end{subfigure}
        \caption{Skeletons of $F_{\F_{29}}^{\mathrm{adv}_\infty}$ and $F_{\F_{29}}^{\mathrm{back}_\infty}$ shown side-by-side}
        \label{fig:F29skeletons earlier copy}
    \end{figure}
    \begin{figure}[h]
        \begin{subfigure}{0.495\textwidth}
            \includegraphics[width=\linewidth]{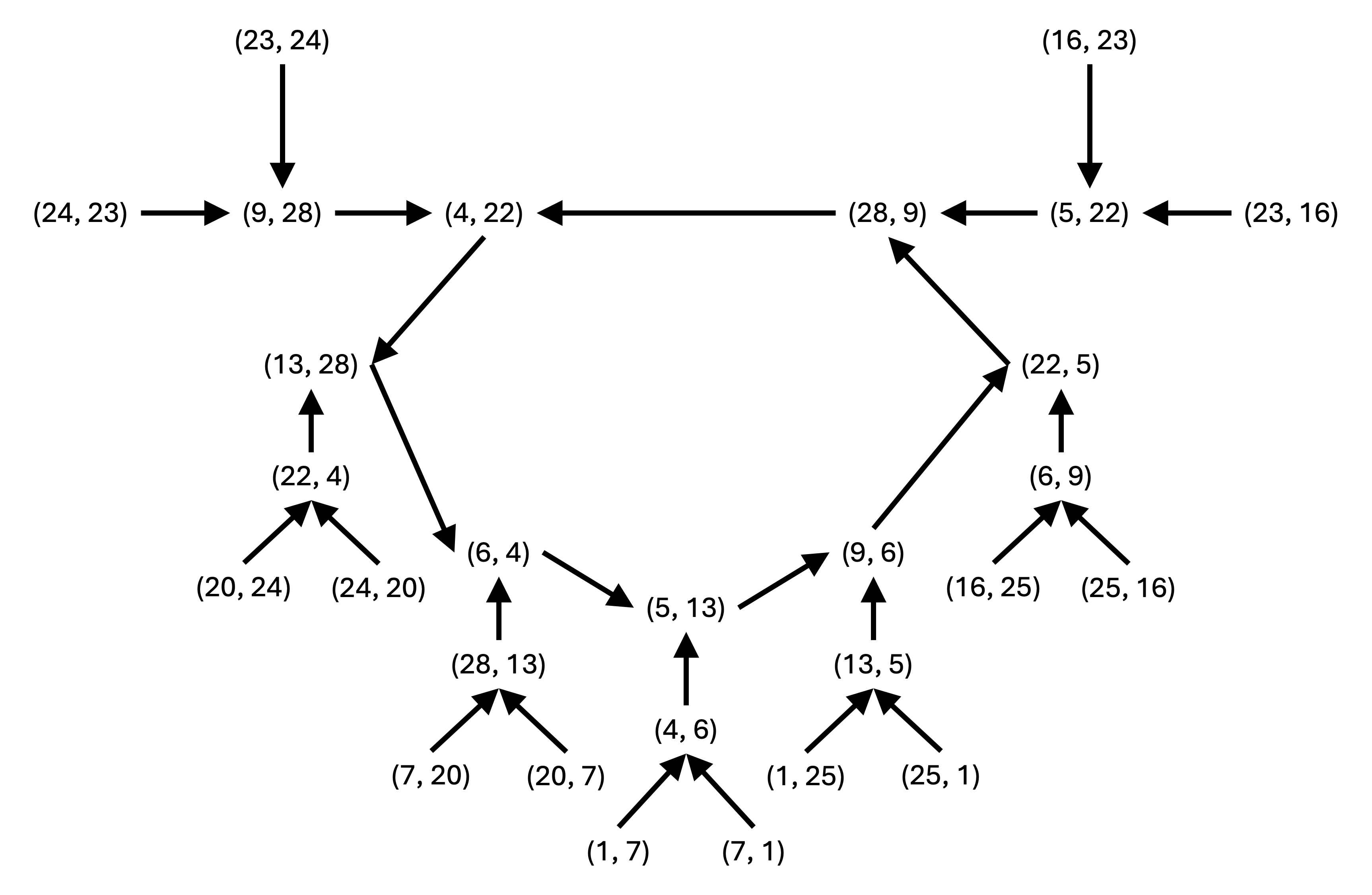} 
            \caption{One connected component of $\Sadv{\infty}{\F_{29}}$}
            \label{fig:forwardscomparisononleft}
        \end{subfigure}
        \hfill
        \begin{subfigure}{0.495\textwidth}
            \includegraphics[width=\linewidth]{Backwards_F29_jellyfish.png}
            \caption{One connected component of $\Sback{\infty}{\F_{29}}$}
            \label{fig:backwardscomparisononright}
        \end{subfigure}
        \caption{One connected component each of $\Sadv{\infty}{\F_{29}}$ and $\Sback{\infty}{\F_{29}}$, showing contravariantly isomorphic structure}
        \label{fig:F29sidebyside earlier copy}
    \end{figure}
    For more details, one of the four small jellyfish are shown with labels in Figure~\ref{fig:F29sidebyside earlier copy}. We point out that every node in the long cycle of length 28 has $k$-value $\overline{6}$, and every node in the four short cycles of length 7 has $k$-value $\overline{20}$. However, $\sigma\!\left(\overline{6}\right)=\overline{20}$. Thus, any contravariant isomorphism $\tau:F_{\F_{29}}^{\mathrm{adv}_\infty}\to F_{\F_{29}}^{\mathrm{back}_\infty}$ cannot not be a lift of $\sigma$ because $\tau$ must preserve the cycle length.
\end{remark}

For a prime power $q\equiv 1\bmod 8$, we speculate that a reversal symmetry between $F_{\F_{q}}^{\mathrm{adv}_\infty}$ and $F_{\F_{q}}^{\mathrm{back}_\infty}$ exists, but any advancement in this direction seems to require knowledge specific to $q \equiv 1 \bmod 8$ of the precise structure of both $F_{\F_{q}}^{\mathrm{adv}_\infty}$ and $F_{\F_{q}}^{\mathrm{back}_\infty}$. We conclude this section by displaying the structure of $G_{\F_{113}}^{\mathrm{adv}_\infty} \cup G_{\F_{113}}^{\mathrm{back}_\infty}$ (in Figure~\ref{fig:F113sidebyside}), which shows that advancement and backtracking are not single-valued.

\begin{figure}[h]
    \floatbox[{\capbeside\thisfloatsetup{capbesideposition={right,center},capbesidewidth=0.65\textwidth}}]{figure}[\FBwidth]{\caption{An image of $G_{\F_{113}}^{\mathrm{adv}_\infty} \cup G_{\F_{113}}^{\mathrm{back}_\infty}$ (generated by Wolfram Mathematica) with $G_{\F_{113}}^\mathrm{cyc}$ in black, $G_{\F_{113}}^{\mathrm{adv}_\infty} \setminus G_{\F_{113}}^\mathrm{cyc}$ in green, and $G_{\F_{113}}^{\mathrm{back}_\infty} \setminus G_{\F_{113}}^\mathrm{cyc}$ in red, showing that $\overline{9}$, $\overline{88}$, $\overline{26}$, and $\overline{100}$ can each be advanced in $G_{\F_{113}}^{\mathrm{adv}_\infty}$ in multiple ways and that $\overline{46}$, $\overline{67}$, $\overline{20}$, and $\overline{93}$ can each be backtracked in $G_{\F_{113}}^{\mathrm{back}_\infty}$ in multiple ways}\label{fig:F113sidebyside}}{\includegraphics[width=0.35\textwidth]{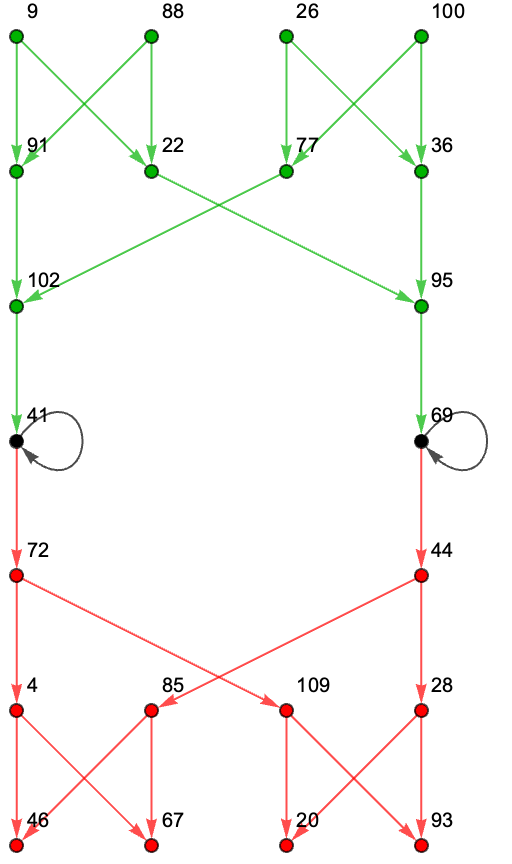}}
\end{figure}

\section{Proof of theorems}\label{Proof of theorems}
Here, we relate the theorems in the introduction to statements we have proven.

\subsection{Proof of Theorem~\ref{5mod8results}}
\begin{proof}
    Part (1), part (4), and the first part of part (2) of Theorem~\ref{5mod8results} come from Corollary~\ref{SFqadm formula} because 16 is a fourth power. The rest of part (2) of Theorem~\ref{5mod8results} comes from Corollary~\ref{nonempty past 13}. Part (3) of Theorem~\ref{5mod8results} comes from Corollary~\ref{SFqadvinfty population}. Part (5) of Theorem~\ref{5mod8results} comes from Theorem~\ref{tentacle colon structure}.
\end{proof}

\subsection{Proof of Corollary~\ref{introduction reversal}}
\begin{proof}
    The theorem follows from Corollary~\ref{node-reversal}.
\end{proof}

\subsection{Proof of Theorem~\ref{5mod8backtrackingresults}}
\begin{proof}
    Part (1) and the first part of part (2) of Theorem~\ref{5mod8backtrackingresults} come from Corollary~\ref{backtracking twice to infinite}. The rest of part (2) and all of part (3) of Theorem~\ref{5mod8backtrackingresults} can be deduced from part (2) of Theorem~\ref{5mod8results} and Corollary~\ref{advance backtrack count equality}. Part (4) of Theorem~\ref{5mod8backtrackingresults} comes from Lemma~\ref{backtrackability extension}. Part (5) of Theorem~\ref{5mod8results} comes from Theorem~\ref{tentacle colon structure}.
\end{proof}

\subsection{Proof of Theorem~\ref{population reversal main theorem}}
\begin{proof}
    All three parts of the theorem follow from Corollary~\ref{S correspondences}.
\end{proof}

\subsection{Proof of Proposition~\ref{cycle fractions}}
\begin{proof}
    The result follows from Theorem~\ref{quarter overlap}.
\end{proof}

\section*{Acknowledgements}
I would like to thank Professors Yifeng Huang, Dagan Karp, and Ken Ono for their guidance with this paper. In addition, this work is a continuation of work that I did the previous summer funded by the Giovanni Borrelli Fellowship from Harvey Mudd College, so I would like to thank the college and its donors for funding this research and Professor Lenny Fukshansky for advising me then. I was first introduced to studying the arithmetic-geometric progression over finite fields by a talk by Eleanor McSpirit at the UVA REU in Number Theory run by Professor Ono, so I would like to thank her for introducing me to this topic and to everyone who sponsored or helped organize this program. I would also like to thank one of my REU researchmates, Noah Walsh, for helping me bound the population of the $q \equiv 5 \bmod 8$ jellyfish swarms more closely. Lastly, I would like to acknowledge Évariste Galois for developing many interesting and useful mathematical results that my work on this topic uses; while this last type of acknowledgment is admittedly not common, maybe it should be.


\begin{thebibliography}{1}

    \bibitem{Borwein_book}
    J.~M. Borwein and P.~B. Borwein.
    \newblock {\em Pi and the {AGM}}.
    \newblock Canadian Mathematical Society Series of Monographs and Advanced
      Texts. John Wiley \& Sons, Inc., New York, 1987.
    \newblock A study in analytic number theory and computational complexity, A
      Wiley-Interscience Publication.
    
    \bibitem{MR4567422}
    M.~J. Griffin, K.~Ono, N.~Saikia, and W.-L. Tsai.
    \newblock A{GM} and jellyfish swarms of elliptic curves.
    \newblock {\em Amer. Math. Monthly}, 130(4):355--369, 2023.
    
    \bibitem{kayath2024agmaquariumsellipticcurves}
    J.~Kayath, C.~Lane, B.~Neifeld, T.~Ni, and H.~Xue.
    \newblock {AGM} aquariums and elliptic curves over arbitrary finite fields.
    \newblock Preprint, \url{https://arxiv.org/abs/2410.17969}, 2024.
    
    \bibitem{MR1401749}
    Y.~Martin and K.~Ono.
    \newblock Eta-quotients and elliptic curves.
    \newblock {\em Proc. Amer. Math. Soc.}, 125(11):3169--3176, 1997.
    
    \bibitem{mcspirit2023hypergeometryagmfinitefields}
    E.~McSpirit and K.~Ono.
    \newblock Hypergeometry and the {AGM} over finite fields.
    \newblock Preprint, \url{https://arxiv.org/abs/2302.10387}, 2023.
    
    \bibitem{Volcanoes}
    A.~V. Sutherland.
    \newblock Isogeny volcanoes.
    \newblock In {\em A{NTS} {X}---{P}roceedings of the {T}enth {A}lgorithmic
      {N}umber {T}heory {S}ymposium}, volume~1 of {\em Open Book Ser.}, pages
      507--530. Math. Sci. Publ., Berkeley, CA, 2013.
    
    \end{thebibliography}
\end{document}